\newtheorem{theorem}{Theorem}
\newtheorem{lemma}{Lemma}
\newtheorem{corollary}{Corollary}
\newcommand{\transpose}{\mathsf{T}}
\newcommand{\Laplace}{\Delta}
\newcommand{\Bhat}{\mathbf{B}_{\rm o}}
\newcommand{\Lbhat}{\widetilde{L}_{\rm b/o}}
\newcommand{\Shat}{\mathbf{S}_{\rm o}}
\begin{document}


\title{Impact of correlated observation errors on the convergence of the conjugate gradient algorithm
  in variational data assimilation}

\author{Olivier Goux$^{1,2}$, Selime G\"{u}rol$^1$, Anthony T. Weaver$^1$, Youssef Diouane$^3$, Oliver Guillet$^4$}

\date{{\small %
$^{1}$CERFACS / CECI CNRS UMR 5318, 42 avenue Gaspard Coriolis, 31057 Toulouse Cedex 01, France\\
$^{2}$ISAE-SUPAERO, University of Toulouse, 10 avenue Edouard Belin, BP 54032, 31055 Toulouse Cedex 4, France\\
$^{3}$Department of Mathematics and Industrial Engineering, Polytechnique Montr\'eal, QC, Canada\\
$^{4}$CNRM UMR 3589, M\'et\'eo-France and CNRS, 42 avenue Gaspard Coriolis, 31057 Toulouse Cedex 01, France%
}}

\maketitle


\abstract{An important class of nonlinear weighted least-squares problems arises from the assimilation of observations
  in atmospheric and ocean models. In variational data assimilation, inverse error covariance matrices define the
  weighting matrices of the least-squares problem. For observation errors, a diagonal matrix
  ({\it i.e.}, uncorrelated  errors) is often assumed for simplicity even when observation errors are suspected
  to be correlated. While accounting for observation-error correlations should improve the quality of the solution,
  it also affects the convergence rate of the minimization algorithms used to iterate to the solution.
  If the minimization process is stopped before reaching full convergence, which is usually the case in operational
  applications, the solution may be degraded even if the observation-error correlations are correctly
  accounted for.

  In this article, we explore the influence of the observation-error correlation matrix ($\mathbf{R}$)
  on the convergence rate of a preconditioned conjugate gradient (PCG) algorithm applied
  to a one-dimensional variational data assimilation (1D-Var) problem. We design the
  idealised 1D-Var system to include two key features used in more complex systems:
  we use the background error covariance matrix ($\mathbf{B}$) as a preconditioner (B-PCG);
  and we use a diffusion operator to model spatial correlations in $\mathbf{B}$ and $\mathbf{R}$.
  Analytical and numerical results with the 1D-Var system show a strong sensitivity of
  the convergence rate of B-PCG to the parameters of the diffusion-based correlation models.
  Depending on the parameter choices, correlated observation errors
  can either speed up or slow down the convergence.
  In practice, a compromise may be required in the parameter specifications of $\mathbf{B}$ and
  $\mathbf{R}$ between staying close to the best available estimates on the one hand and
  ensuring an adequate convergence rate of the minimization algorithm on the other.}
  
\vspace{0.5cm}
\textbf{Keywords:} nonlinear weighted least-squares; observation errors; diffusion operators; conjugate gradient; convergence rate; condition number

\section{Introduction}
\label{sec_intro}
An important class of nonlinear weighted least-squares problems arises from the assimilation of observations in atmospheric and
ocean models, a procedure known as {\it data assimilation}. In data assimilation,
observations of the state of a system are combined with an \textit{a priori} estimate of the state,
called the {\it background}, to produce an optimal estimate of the state of the system, called the {\it analysis}.
In {\it variational data assimilation}, the optimal estimate is obtained iteratively by minimising
a nonlinear weighted least-squares cost function that is the sum of two terms:
one measuring the model fit to the background state (the background term $\mathcal{J}_{\rm b}$); the other
measuring the model fit to the observations (the observation term $\mathcal{J}_{\rm o}$),
subject to constraints (generally nonlinear) that relate the model state to the observations.
The weighting matrices for $\mathcal{J}_{\rm b}$ and
$\mathcal{J}_{\rm o}$ are defined by an estimate of the inverse of the background and observation-error covariance matrices
($\mathbf{B}^{-1}$ and $\mathbf{R}^{-1}$), respectively.
Variational data assimilation is widely used for operational state estimation in meteorology and oceanography as it is a practical
method for solving nonlinear least-squares problems when the dimensions of the state and observation
vectors are huge (typically $10^6$ to $10^9$).

In variational data assimilation, the cost function is minimised approximately using a Truncated Gauss-Newton (GN)
algorithm \citep{Gratton_2007}
or incremental variational data assimilation as it is known in the data assimilation community \citep{Courtier_1994}.
This reduces the nonlinear problem to a sequence of linear sub-problems (quadratic cost functions), each of which is
solved iteratively using a Preconditioned Conjugate Gradient (PCG) method \citep{Gurol_2013}.
Standard implementations of PCG for data assimilation employ $\mathbf{B}$ as a first-level preconditioner
\citep{Derber_1989, Gurol_2013}, which
we refer to as B-PCG hereafter. Besides significantly improving the conditioning of the Hessian matrix \citep{Lorenc_1997},
$\mathbf{B}$-preconditioning allows the B-PCG algorithm to be formulated in a way that avoids the need to specify
$\mathbf{B}^{-1}$ explicitly. This is important as most $\mathbf{B}$ formulations used in practice are not associated with
convenient representations of~$\mathbf{B}^{-1}$.

There is still a requirement to specify $\mathbf{R}^{-1}$, however.
To simplify its specification, practical implementations of $\mathbf{R}$ tend to have relatively simple structural forms.
In the extreme yet common case, $\mathbf{R}$ is taken to be a diagonal matrix, which amounts to assuming that
the observation errors are uncorrelated.
This is a poor assumption for certain observations, especially from satellites
\citep{Bormann_2010, Waller_2016}.
If observation-error correlations are neglected when they are known to be important then
the solution of the weighted least-squares
problem will result in a degraded (sub-optimal) analysis and poor exploitation of the assimilated data. To mitigate
the former while still using a diagonal $\mathbf{R}$, observation data sets are either `thinned' into a subset
of observations or aggregated into `super-observations' that have reduced error correlations \citep{Liu_2002}.
Furthermore,
the observation-error variances are often multiplied by an `inflation' factor in order to prevent the analysis
from overfitting observations that may still have a substantial component of correlated error.
However, when observation error is correlated over distances similar to or greater than
those of the background error, inflation can actually degrade the analysis \citep{Reid_2020}.
While these methods can alleviate to some extent the inaccuracies associated
with a diagonal $\mathbf{R}$, they still lead to sub-optimal solutions since
potentially valuable observations are excluded and any remaining error correlations from the pre-processed
observations are ignored \citep{Rainwater_2015}.

Several studies have examined the impact from using non-diagonal representations of $\mathbf{R}$
to account for spatially correlated errors
\citep{Healy_2005,Stewart_2013,Ruggiero_2016,Pinnington_2016}.
A general conclusion that arises in most of these works is that accounting for spatial correlations in $\mathbf{R}$
leads to a more accurate solution, especially for the smaller spatial scales, even with a rather crude
correlation model. However, even crude correlation models can lead to impractical representations
of $\mathbf{R}^{-1}$. Various correlation models with accessible
inverse representations have been proposed in the literature
\citep{Brankart_2009,Michel_2018,Bedard_2019,Guillet_2019,Hu_2021}. One of the challenges with
specifying $\mathbf{R}$ and $\mathbf{R}^{-1}$ is that observation
locations tend to be arranged in an arbitrary and unpredictable way
due to the measurement method or quality control procedures that result in
observations being removed.
This means that correlation models developed for structured grids, like those typically associated
with $\mathbf{B}$, are not necessarily applicable for $\mathbf{R}$.

In this study, we use diffusion operators to model spatial correlations in both $\mathbf{B}$ and $\mathbf{R}$.
Diffusion operators can be used to model correlation functions from the Mat\'ern class
\citep{Guttorp_2006} and provide convenient and inexpensive representations of the associated
inverse correlation operators \citep{Mirouze_2010,Weaver_2013}.
They are popular for representing spatially correlated background error in complex boundary
domains such as those encountered in ocean data assimilation
\citep{Derber_1989,Egbert_1994,Weaver_2001,Weaver_2015,Weaver_2020}.
Furthermore, \cite{Guillet_2019} describes how to adapt these operators to unstructured meshes
and hence to make them suitable for $\mathbf{R}$ and $\mathbf{R}^{-1}$.

The rate of convergence of the conjugate gradient (CG) method
is mainly determined by the characteristics of the eigenvalue
spectrum of the Hessian matrix \citep{Axelsson_2000,Gurol_2013}.
As the eigenvalues of the Hessian matrix are strongly dependent on $\mathbf{B}$ and $\mathbf{R}$,
we can expect a non-diagonal $\mathbf{R}$ to have a significant impact on the rate of convergence
of B-PCG. In operational data assimilation, analyses must be delivered subject to strict computational
constraints, which means that the stopping criterion for B-PCG is usually set by a maximum allowed
number of iterations rather than a measure of the convergence of the solution.
Therefore, it is essential to ensure that the rate of convergence of B-PCG from the use
of a non-diagonal $\mathbf{R}$ is not deteriorated to an extent that it outweighs the benefits
brought from specifying a more accurate $\mathbf{R}$.

In previous work, \cite{Haben_2011}
analysed the convergence rate for the special case of a diagonal $\mathbf{R}$ and discussed
the influence on the condition number of observation and background accuracy, observation
density and the background-error correlation length-scale.
\cite{Tabeart_2018} studied the effects of a non-diagonal $\mathbf{R}$ on
the convergence rate of the unpreconditioned CG method and derived general theoretical bounds
for the condition number.
These results were extended by \cite{Tabeart_2021} to the $\mathbf{B}$-preconditioned
case (B-PCG). In both studies, theoretical and experimental results were
obtained for the special case where the background- and observation-error correlation matrices
are defined as circulant matrices. Their numerical experiments were performed using
a particular circulant matrix built from a Second Order Auto-Regressive (SOAR)
correlation function defined on the one-dimensional (1D) circular domain $\mathbb{S}$.

In this article, we are interested in understanding the sensitivity of the condition number
with respect to the basic parameters of the diffusion operators that are
used to model background- and observation-error correlations.
Correlation functions derived from diffusion operators are controlled by a smoothness parameter $M$
(the number of diffusion iterations) as well as a length-scale parameter $L$ (the square-root
of the diffusion coefficient),
and thus are more flexible than the SOAR function, which is controlled by a single length-scale parameter.
In fact, the SOAR function corresponds to a particular member (\mbox{$M=2$})
of the family of correlation kernels represented by the 1D diffusion operator.
We illustrate how the relative choice of $M$ for $\mathbf{B}$ and $\mathbf{R}$
can have a profound effect on the conditioning of the minimisation problem.

The organisation of the article is as follows. In Section~\ref{sec_background_theory}, we introduce
the weighted least-squares problem underlying variational data assimilation and we outline
the solution algorithm based on truncated GN combined with CG. We provide the
background theory on CG (and B-PCG) that is needed in this article for establishing the theoretical results
and for interpreting the results from the numerical experiments with a 1D variational data assimilation (1D-Var)
system. We conclude
this section with a description of $\mathbf{B}$ and $\mathbf{R}$, exposing
the fundamental covariance parameters that control the shape characteristics of the correlation functions
as well as the conditioning of the CG minimisation. The formulation of $\mathbf{B}$ and $\mathbf{R}$
in terms of diffusion operators depends on theoretical results that are summarised
in Appendix~\ref{app_matern}. In Section~\ref{sec_general_conditioning}, we study the eigenvalue spectrum
of the Hessian matrix and derive analytical expressions for bounds on the condition number.
First, we present the general
bounds that were derived by \cite{Tabeart_2018} and \cite{Tabeart_2021}.
Then, we derive specific bounds that take into account the structural properties of the
diffusion operators used to model $\mathbf{B}$ and $\mathbf{R}$. We relegate the technical
details of the proofs of a key theorem and associated corollaries to
Appendices~\ref{app_proof_theorem5}, \ref{app_proof_corollary1}, and \ref{app_proof_corollary2}.
In Section~\ref{sec_numerical_experiments},
we present the results from numerical experiments with the 1D-Var system to examine the
sensitivity of the convergence
of B-PCG to the parameters of $\mathbf{B}$ and $\mathbf{R}$. These results show a strong sensitivity
of the condition number, and hence convergence rate, to the correlation parameters.
We argue that the parameter values should be chosen as a compromise between specifying the
most accurate correlation model on the one hand and achieving a satisfactory convergence rate
for the CG minimisation on the other.
We provide a summary and conclusions in Section~\ref{sec_conclusions}.

\section{The weighted least-squares problem}
\label{sec_background_theory}
\subsection{Problem formulation: variational data assimilation}
\label{sec_problem_formulation}
Variational data assimilation provides an estimate of the physical state of a system
by combining \textit{a priori} information (the background state) and observations,
together with information about their uncertainties. Here, we will use mathematical
notation that is standard in meteorological and ocean data assimilation \citep{Ide_97}.
Assuming unbiased Gaussian error statistics for the background state and observations,
the estimation problem can be formulated as a nonlinear weighted least-squares problem
defined by the cost function
\begin{equation}
  \min_{\mathbf{x}} \mathcal{J}(\mathbf{x})= \tfrac{1}{2}\|\mathbf{x}-\mathbf{x}_{\rm b}\|_{\mathbf{B}^{-1}}^{2}
  +\tfrac{1}{2}\|\mathcal{H}(\mathbf{x})-\mathbf{y}_{\rm o}\|_{\mathbf{R}^{-1}}^{2}
    \label{eq_cost_function}
\end{equation}
where $\mathbf{x} \in \mathbb{R}^{n}$ is the state vector to be optimised and
$\mathbf{x}_{\rm b} \in \mathbb{R}^{n}$ is the background estimate of the state vector.
The vector of observations is $\mathbf{y}_{\rm o} \in \mathbb{R}^{m}$, and $\mathcal{H}(\cdot)$
is the observation operator, which maps an estimate of the state of the system
to its equivalent in observation space. In general, $\mathcal{H}(\cdot)$ is nonlinear
and non-bijective. In four-dimensional variational assimilation (4D-Var),
$\mathcal{H}(\cdot)$ would contain the forecast model operator,
$\mathbf{x}$ would be the initial state vector and
$\mathbf{y}_{\rm o}$ would be a vector that concatenates observations distributed over a given time window.
The unbiased Gaussian distributions of the background and observation errors are characterized
statistically by the covariance matrices \mbox{$\mathbf{B} \in \mathbb{R}^{n \times n}$} and
\mbox{$\mathbf{R} \in \mathbb{R}^{m \times m}$}, respectively. By definition, $\mathbf{B}$ and $\mathbf{R}$ are symmetric
positive-definite (SPD) matrices. The inverse covariance matrices $\mathbf{B}^{-1}$ and $\mathbf{R}^{-1}$
define inner products in the background and observation spaces, and are used as weighting matrices
in the cost function \eqref{eq_cost_function} where $\|\mathbf{x}\|_{\mathbf{P}}^2=\mathbf{x}^\transpose\mathbf{P}\mathbf{x}$
denotes the squared $\mathbf{P}$-norm of a vector.
The analysis is the global minimising solution: \mbox{$\mathbf{x}_{\rm a} = \arg \min \mathcal{J}(\mathbf{x})$}.

Truncated Gauss-Newton (GN) \citep{Gratton_2007}, which is known as incremental
variational assimilation in the meteorological and ocean data assimilation
communities~\citep{Courtier_1994}, is a common method for finding an approximate
minimum of the nonlinear cost function~\eqref{eq_cost_function}. Truncated GN approaches
the solution iteratively by solving, on each GN iteration $k$, the linearized sub-problem
\begin{equation}
\label{eq_Quadratic}
\min_{\delta \mathbf{x}} J(\delta \mathbf{x}) = \tfrac{1}{2}\|\mathbf{x}_k - \mathbf{x}_{\rm b}
+ \delta \mathbf{x} \|^2_{\mathbf{B}^{-1}} + \tfrac{1}{2}\|\mathbf{H}_k \delta \mathbf{x} - \mathbf{d}_k \|^2_{\mathbf{R}^{-1}},
\end{equation}
which is a quadratic approximation of the non-quadratic cost function~\eqref{eq_cost_function}
in a neighbourhood of the current iterate $\mathbf{x}_k$.
In \eqref{eq_Quadratic}, $\mathbf{H}_k \in \mathbb{R}^{m \times n}$ is the observation operator linearized
about  $\mathbf{x}_k$, and \mbox{$\mathbf{d}_k = \mathbf{y}_{\rm o} - \mathcal{H}(\mathbf{x}_k) \in \mathbb{R}^{m}$}
is the misfit between the observation vector and the current iterate mapped to observation
space. If $\delta \mathbf{x}_k$ denotes the solution of \eqref{eq_Quadratic} then the estimate
of the state is updated according to
$$
    \mathbf{x}_{k} = \mathbf{x}_{k-1} +  \delta \mathbf{x}_k,
$$
    where $k=1,\ldots,K$ and $\mathbf{x}_0 = \mathbf{x}_{\rm b}$ (in general). In data assimilation applications
    with atmospheric or ocean models, the maximum number of GN iterations is typically very
    small ($K<10$) for computational reasons.

The quadratic sub-problem~\eqref{eq_Quadratic} can be rewritten in standard quadratic form
\begin{equation}
\label{eq_compactquadratic}
\min_{\delta \mathbf{x}} J(\delta \mathbf{x}) =
\frac{1}{2} \delta \mathbf{x}^{\transpose} \mathbf{A}_k \delta \mathbf{x} - \mathbf{b}^{\transpose}_k \delta \mathbf{x} + c_k,
\end{equation}
where
$$
\mathbf{A}_k = \mathbf{B}^{-1}+\mathbf{H}_k^\mathsf{T}\mathbf{R}^{-1}\mathbf{H}_k
$$
is the 
SPD approximation of the Hessian matrix of the nonlinear cost function,
$$
    \mathbf{b}_k = \mathbf{B}^{-1}(\mathbf{x}_{\rm b} - \mathbf{x}_k) +  \mathbf{H}_k^{\transpose}\mathbf{R}^{-1}\mathbf{d}_k
$$
    is the negative gradient of the nonlinear cost function with respect to the current
    iterate $\mathbf{x}_k$, and $c_k = J(\mathbf{0})$ is a scalar. Satisfying the optimality
    condition of the quadratic sub-problem~\eqref{eq_compactquadratic} requires solving the linear system
\begin{equation}
\mathbf{A}_k \delta\mathbf{x} = \mathbf{b}_k.
\label{eq_BLUE_system}
\nonumber
\end{equation}

For our target applications, the dimension ($n$) of the state vector is large and the matrices are generally
only available as operators ({\it i.e.}, via matrix-vector products, not explicit matrices). For this reason,
it is very common to solve the quadratic sub-problem iteratively using CG methods.

\subsection{Solving the quadratic sub-problem with the conjugate gradient method}
\label{sec_CG}
CG is a Krylov subspace method (see \citet[Section 6.7]{Golub_2013} and \citet[Section 10.2]{Saad_2003})
for solving linear systems where the system matrix is SPD. CG seeks an approximate solution
\begin{equation}
    \delta \mathbf{x}_{\ell} \in \delta \mathbf{x}_0 + \mathcal{K}^{\ell}(\mathbf{A}_k, \mathbf{b}_k),
\nonumber
\end{equation}
where $\delta \mathbf{x}_0$ is the initial approximation and
\begin{equation}
    \mathcal{K}^{\ell}(\mathbf{A}_k,\mathbf{b}_k)
    =\text{span}\{\mathbf{b}_k,\mathbf{A}_k\mathbf{b}_k,\cdots,\mathbf{A}_k^{\ell-1}\mathbf{b}_k\}
\nonumber
\end{equation}
is the Krylov subspace generated by $\mathbf{A}_k$ and $\mathbf{b}_k$. When $\mathbf{x}_0 = \mathbf{x}_{\rm b}$,
the initial iterate $\delta \mathbf{x}_0 = \mathbf{0}$. Hereafter, we will drop the truncated
GN iteration index $k$ for clarity of notation. In order to find a unique solution,
CG imposes the orthogonality condition
\begin{equation}
   \mathbf{r}_{\ell}  \perp \mathcal{K}^{\ell}(\mathbf{A},\mathbf{b}),
\nonumber
\end{equation}
where $\mathbf{r}_{\ell} = \mathbf{b} - \mathbf{A} \delta \mathbf{x}_{\ell}$ is the residual at the $\ell$-th iteration of CG.
As a result, CG minimises the quadratic cost function given by \eqref{eq_compactquadratic} over the subspace
$ \delta \mathbf{x}_0 + \mathcal{K}^{\ell}(\mathbf{A}, \mathbf{b})$~\citep[Theorem 5.2]{JorgeNocedal2006}, so that
the $\ell$-th iterate $\delta \mathbf{x}_{\ell}$ minimises the error
$\mathbf{e}_{\ell} = \delta \mathbf{x}^\ast - \delta \mathbf{x}_{\ell}$
in the $\mathbf{A}$-norm over the same Krylov subspace, $\delta \mathbf{x}^\ast$ being the exact solution~\citep[Lemma 2.1.1]{Kelley_1987}.
The convergence properties of CG can then be analysed in terms of the error in the $\mathbf{A}$-norm~\citep[pages 204-205]{Saad_2003}:
\begin{equation}
  \frac{\|\mathbf{e_{\ell}}\|_{\mathbf{A}}}{\|\mathbf{e_0}\|_{\mathbf{A}}}
  \leq 2\left(\frac{\sqrt{\kappa(\mathbf{A})}-1}{\sqrt{\kappa(\mathbf{A})}+1}\right)^{\ell}, \quad \ell \in \mathbb{N},
\label{eq_kappa_error_bound}
\end{equation}
where $\kappa(\mathbf{A})$ is the condition number of $\mathbf{A}$, which is defined in the 2-norm as  
\begin{equation}
\label{eq_def_kappa}
\kappa(\mathbf{A})=\frac{\lambda_{\rm max}(\mathbf{A})}{\lambda_{\rm min}(\mathbf{A})},
\nonumber
\end{equation}
with $\lambda_{\rm max}(\mathbf{A})$ and $\lambda_{\rm min}(\mathbf{A})$ being the largest
and smallest eigenvalues of $\mathbf{A}$, respectively. Equation~\eqref{eq_kappa_error_bound}
shows that convergence will tend to be fast when the condition number is close to 1.
The condition number can thus be used as an indicator of the convergence rate of CG. Note that
the initial error may also have an influence on the convergence behaviour.
For simplicity, however, we focus only on the effect of the condition number on the convergence rate.

Since Equation~\eqref{eq_kappa_error_bound} depends on the condition number, it does not account for the distribution
of the eigenvalues between the smallest and largest values. As a consequence, it can lead to a pessimistic error bound,
especially when $\kappa$ is very large. More advanced error bounds exist, which account for a more complex representation
of the spectrum (\textit{e.g.}, see Chapter~13 in the book of \cite{Axelsson_1994}). However, for the problem considered
in this article, those error bounds provide little improvement over the error bound given by Equation~\eqref{eq_kappa_error_bound}.
They have been shown to be more accurate only in the limit of a very large number of iterations, while here we are interested mainly
in the solution accuracy in the early iterations of CG. As they are more complex and less general than Equation~\eqref{eq_kappa_error_bound},
we did not apply them in this article.

In order to accelerate the convergence rate of CG, it is common to use a preconditioner. For data assimilation problems
that solve quadratic problem~\eqref{eq_compactquadratic}, it is customary to use $\mathbf{B}$ as a preconditioner, as it usually yields
a significantly smaller condition number compared to that of the unpreconditioned problem, and a more clustered spectrum of
eigenvalues \citep{Lorenc_1988,Lorenc_1997,Gurol_2013}. Therefore, we will focus on solving the $\mathbf{B}$-preconditioned
linear system. Since $\mathbf{B}$ is SPD, it can be factored as
\begin{equation}
\mathbf{B} = \mathbf{U} \mathbf{U}^\transpose
\end{equation}
where $\mathbf{U} \in \mathbb{R}^{n \times n}$.
We can then introduce $\mathbf{B}$-preconditioning symmetrically using a split-preconditioner,
\begin{equation}
\label{eq_preconditioned_system}
    \mathbf{U}^\transpose \mathbf{A}\, \mathbf{U}\, \delta \mathbf{v} = \mathbf{U}^\transpose \mathbf{b}
\end{equation}
where $ \delta \mathbf{x} = \mathbf{U} \delta \mathbf{v}$.
An unpreconditioned CG can be applied to Equation~\eqref{eq_preconditioned_system} by taking $\mathbf{U}^\transpose \mathbf{A} \mathbf{U}$
as the (SPD) system matrix and $\mathbf{U}^\transpose \mathbf{b}$ as the right-hand side.


 In this article, we will evaluate the condition number of the preconditioned Hessian matrix,
\begin{equation}
    \label{eq_def_S}
    \mathbf{S}\, = \, \mathbf{U}^\transpose \mathbf{A}\, \mathbf{U} \\ =\, \mathbf{I}_n\, +\, \mathbf{U}^\transpose\mathbf{H}^\transpose \mathbf{R}^{-1} \, \mathbf{H}\, \mathbf{U},
\end{equation}
and determine its sensitivity to parameters in the covariance matrices $\mathbf{B}$ and $\mathbf{R}$ when their spatial
correlations are modelled by diffusion operators.

\subsection{Weighting matrices formulated as the inverse of diffusion operators}

\label{sec_diffusion_operators}
The operators $\mathbf{B}$ and $\mathbf{R}$ describe the covariance structures of the background and
observation errors. 
In the idealised 1D-Var system used in this study, the covariance matrices are small enough to
be constructed
explicitly using a functional expression to determine the matrix elements ({\it e.g.}, as done in
\cite{Tabeart_2021}). 
However, when one considers a realistic system, the size of $\mathbf{B}$ and $\mathbf{R}$ become too large to perform direct
matrix-vector products. Hence, we prefer to adopt an approach that scales with the size of the problem
and avoids the explicit construction of covariance matrices.

Covariance matrices can be factored as \mbox{$\boldsymbol{\Sigma} \mathbf{C} \boldsymbol{\Sigma}$}
where $\boldsymbol{\Sigma}$ is a diagonal matrix of standard deviations and $\mathbf{C}$ is an
SPD correlation matrix. The computational difficulties are inherent in the specification and
application of $\mathbf{C}$.
\cite{Egbert_1994} showed that multiplying an arbitrary vector by a Gaussian correlation matrix
can approximately be achieved by numerically `time'-stepping a diffusion equation with that arbitrary vector
taken as the `initial' condition\footnote{In the current context, the time coordinate in the diffusion equation
  does not represent physical time but should be interpreted as a pseudo-time coordinate that
  controls the smoothing properties of the diffusion kernel.
  This explains why `time' has been written within quotation marks.}.
This procedure defines an operator that models the product of a correlation matrix with the `initial'
condition without defining each element of the matrix. Since each `time'-step involves
the manipulation of sparse matrices, this strategy is naturally appropriate for large problems.
The product of the diffusion coefficient $\mu$ and the total action `time' \mbox{$T = M\Delta t$}
of the diffusion process, where $M$ is the total number of diffusion steps and $\Delta t$ is
the `time' step, controls the length-scale $D_{\rm g}$ of the Gaussian function that would be
used to construct the correlation matrix, where \mbox{$D_{\rm g}^2 = 2 \mu T$}.
\cite{Weaver_2001} describe the technique in detail and generalize it
to account for anisotropic correlations. \cite{Mirouze_2010} and \cite{Weaver_2013}
describe an extension of the technique that involves solving the diffusion equation using an implicit
`time'-stepping scheme instead of the explicit scheme of the original approach.

With the implicit scheme, the total number of diffusion steps $M$ becomes a free parameter together
with the diffusion coefficient multiplied by the `time' step ($\mu \Delta t$). (With the explicit
scheme, their product is the single free parameter controlling the length-scale $D_{\rm g}$ of the
Gaussian function).
This extra degree of freedom allows the diffusion operator
to represent matrix-vector products with correlation matrices from the Mat\'ern family
\citep{Guttorp_2006} where $M$ is linked to the standard
smoothness parameter $\nu$ of the underlying Mat\'ern correlation functions in $\mathbb{R}^d$
via the relation \mbox{$\nu = M - d/2$}.
In $\mathbb{R}$, these functions are characterised by a polynomial times
the exponential function and are also known as $M$th-order Auto-Regressive (AR) functions.
The parameter $\mu \Delta t$ is precisely the square of the
standard length-scale parameter $L$ of the Mat\'ern functions. Without loss of generality,
$\Delta t$ can be set to 1, so that \mbox{$\mu = L^2$}. The connection between
the Mat\'ern correlation functions and the differential operator describing the inverse
of an implicit diffusion process
has its roots in the seminal work of \cite{Whittle_1963}. We outline the connection
in Appendix~\ref{app_matern}
and provide the key theoretical relations for the 1D-Var problem under consideration
in this study.

While $L$ is the length-scale parameter appearing explicitly in the definition of the
diffusion operator, it is common in data assimilation to use an alternative length-scale
parameter, the Daley length-scale $D$, to control the spatial smoothing properties of the
diffusion kernel. The Daley length-scale can be understood as the half-width of the parabola
osculating the correlation function at its origin \citep{Daley_1991, Pannekoucke_2008}. It is defined for
at least twice differentiable correlations functions, which in our case corresponds to
AR functions with \mbox{$M>1$}. As discussed in Appendix~\ref{app_matern_R}, on $\mathbb{R}$,
$D$ and $L$ are related through the equation
\begin{equation}
  D = L \sqrt{2M-3}
  \label{eq_D}
\end{equation}
where the square-root term generalises to $\sqrt{2M - d - 2}$ in $\mathbb{R}^{d}$ \citep{Weaver_2013}.
An advantage of $D$ over $L$ is that it allows better control of the spectral properties
of the AR functions (see Figure~\ref{fig_matern_plot} in Appendix~\ref{app_matern_R}).
In particular, AR functions converge to a Gaussian function with
length-scale $D$ as $M$ tends to infinity with $L$ simultaneously reduced to zero to keep $D$ constant.
A closely-related length-scale parameter
\begin{equation}
  \rho = L \sqrt{2M-1}
  \label{eq_rho}
\end{equation}
is used in geostatistics \citep[pp.~48--50]{Stein_1999} and machine learning \citep[Chapter~4.2]{Rasmussen_2006}.
AR functions defined in terms of $\rho$ also have
the property of converging to a Gaussian function (with length-scale $\rho$) as $M$ tends to infinity
with $\rho$ fixed.
In $\mathbb{R}^{d}$, the square-root term in Equation~\eqref{eq_rho}  generalises to $\sqrt{2M - d}$.
An advantage of $\rho$ over $D$ is that it is valid for \mbox{$M=1$} as well as \mbox{$M>1$},
while an advantage of $D$ over $\rho$ (and $L$) is that it is easier to estimate in practical
applications when $D$ is spatially dependent \citep[Section~2.4]{Weaver_2020}.
In Section~\ref{sec_general_conditioning}, the analytical results are first derived in terms of $L$ and then
interpreted in terms of both $D$ and $\rho$, where we will refer to the latter as the {\it Stein}
length-scale\footnote{In \cite{Stein_1999}, the square-root term in Equation~\eqref{eq_rho} is effectively
  $\sqrt{2(2M-1)}$ where the extra factor of 2 comes from his alternative definition of the Gaussian function
  that does not include the factor of 2 in the denominator of the function argument as we have assumed here
  (see Equation~\eqref{eq:cg}).} for convenience.
In Section~\ref{sec_numerical_experiments}, the numerical experiments 
are discussed mainly in terms of $D$.

On a circular domain of radius $a$ (see Appendix~\ref{app_matern_S}), we can define the discrete, symmetric diffusion-modelled
correlation operator $\mathbf{C}$ as a sequence of linear operators represented by their respective matrices
\citep[Section~3.1]{Weaver_2015}:
\begin{equation}
  \mathbf{C} \; = \; \mathbf{\Gamma} \, \mathbf{L} \, \mathbf{W}^{-1} \, \mathbf{\Gamma}
    \label{eq_original_diff_operator}
\end{equation}
where $\mathbf{L} = \mathbf{T}^{-M}$ is a self-adjoint diffusion operator, $\mathbf{T}$ being a discrete representation
of the shifted Laplacian operator $\mathcal{T}$. On the circular domain with constant $L$, we have from
Equation~\eqref{eq:impS} that
\mbox{$\mathcal{T} \equiv I - L^2 \partial^2 / a^2 \partial \phi^2$} where \mbox{$-\pi \leq \phi \leq \pi$}.
The matrix $\mathbf{W}$ contains
geometry- and grid-dependent weights.
It defines the weighting matrix of the discrete form
of the $L^2(\mathbb{S})$-inner product with respect to which $\mathbf{L}$ is self-adjoint; {\it i.e.},
\mbox{$\mathbf{L} = \mathbf{W}^{-1} \mathbf{L}^\transpose \mathbf{W}$}.
Sections~3.2 and 3.3 of \cite{Guillet_2019} provide a comprehensive discussion of this point
within the context of a Finite Element Method discretisation of the diffusion equation.
The diagonal matrix $\mathbf{\Gamma}$ contains normalisation factors so that the diagonal elements of $\mathbf{C}$
are approximately equal to one. On the circular domain with constant \mbox{$L \ll a$}, we can set
\mbox{$\mathbf{\Gamma}=\gamma\mathbf{I}$} where $\gamma$ is well approximated by the constant product
$\sqrt{\nu L}$ where $\nu$ is a monotonically increasing function of $M$ given by Equation~\eqref{eq:nu}.
For example, the error in $\gamma^2$ is smaller than 0.001\% when \mbox{$L/a = 0.3$}.
\cite{Weaver_2020} provide an overview of methods for estimating $\mathbf{\Gamma}$
on other domains and when the correlation parameters are not constant.

Taking $M$ to be an even number allows us to split \mbox{$\mathbf{T}^{-M} = \mathbf{T}^{-M/2} \, \mathbf{T}^{-M/2}$}
and hence to derive a simple `square-root' factorisation of Equation~\eqref{eq_original_diff_operator}.
The `square-root' operator is convenient for generating random correlated samples and has been
used for this purpose for the numerical experiments in Section~\ref{sec_numerical_experiments}.
Another convenient property that comes specifically from the implicit formulation is that it
provides immediate access to an inexpensive formulation of the inverse
of the operator. This can be noticed from the inverse of Equation~\eqref{eq_original_diff_operator},
\begin{equation}
  \mathbf{C}^{-1} \; = \; \mathbf{\Gamma}^{-1}\, \mathbf{W} \, \mathbf{L}^{-1} \, \mathbf{\Gamma}^{-1},
    \label{eq_original_diff_operator_inv}
\end{equation}
where $\mathbf{L}^{-1} = \mathbf{T}^M$ simply involves $M$ applications of the shifted Laplacian operator.

Here, we consider a finite-difference discretisation of the diffusion equation under the assumption
that the grid resolution is uniform so that $\mathbf{W}=h\mathbf{I}$ where $h$ is the grid size.
Furthermore, we assume that \mbox{$\mathbf{\Sigma}=\sigma\mathbf{I}$} where $\sigma$ is a constant standard deviation.
Given these assumptions together with the assumption that $L$ is constant,
we can simplify the expressions for the diffusion-modelled covariance operators for $\mathbf{B}$ and $\mathbf{R}$ as
\begin{align}
  \mathbf{B}  & =
  \frac{\displaystyle \sigma_{\rm b}^2 \nu_{\rm b} L_{\rm b}}{\displaystyle h_{\rm b}}\mathbf{T}_{\rm b}^{-M_{\rm b}}
  \; = \;
  \frac{ \displaystyle \sigma_{\rm b}^2 \nu_{\rm b} L_{\rm b}}{\displaystyle h_{\rm b}}
  \left( \mathbf{I}_n-L_{\rm b}^2 \mathbf{\Laplace}_{h_{\rm b}} \right)^{-M_{\rm b}},
  \label{eq_Bmatrix_diffusion}
  \\
  \mathbf{R} & =
  \frac{ \displaystyle \sigma_{\rm o}^2 \nu_{\rm o} L_{\rm o}}{\displaystyle h_{\rm o}} \mathbf{T}_{\rm o}^{-M_{\rm o}}
  \; = \;
  \frac{\displaystyle \sigma_{\rm o}^2 \nu_{\rm o} L_{\rm o}}{\displaystyle h_{\rm o}}
  \left( \mathbf{I}_m-L_{\rm o}^2 \mathbf{\Laplace}_{h_{\rm o}} \right)^{-M_{\rm o}},
    \label{eq_Rmatrix_diffusion}
\end{align}
where the subscripts `b' and `o' refer to quantities relative to the background and
observations, respectively. The symbol $\mathbf{\Laplace}_{h}$ denotes the finite-difference representation of the
Laplacian operator, which depends on the grid resolution for the background and observations. The
matrices \mbox{$\mathbf{I}_n \in \mathbb{R}^{n \times n}$} and \mbox{$\mathbf{I}_m  \in \mathbb{R}^{m \times m}$} are identity
matrices. We are also interested in the expression for $\mathbf{R}^{-1}$, which follows immediately from
Equation~\eqref{eq_Rmatrix_diffusion}:
\begin{align}
  \mathbf{R}^{-1}  &  =
  \frac{h_{\rm o}}{ \sigma_{\rm o}^2 \nu_{\rm o} L_{\rm o}}\mathbf{T}_{\rm o}^{M_{\rm o}}
  \; = \;
  \frac{h_{\rm o}}{ \sigma_{\rm o}^2 \nu_{\rm o} L_{\rm o}} \left( \mathbf{I}_m-L_{\rm o}^2 \mathbf{\Laplace}_{h_{\rm o}}\right)^{M_{\rm o}}.
    \label{eq_Rinv}
\end{align}
By taking \mbox{$\mathbf{W}_{\rm o} = h_{\rm o} \mathbf{I}_m$}, we are assuming that the observations are regularly distributed with
a separation distance of $h_{\rm o}$. This is done for mathematical convenience. Relative to the domain size $2\pi a$,
$h_{\rm o}$ is an explicit parameter that reflects observation density and can be compared
to $h_{\rm b}/2\pi a$, the density of background points.
With the simplifying assumptions above, we are able to establish explicit theoretical bounds on the condition number
of the preconditioned Hessian matrix as detailed in Section~\ref{sec_general_conditioning}.


We remark now on the actual values of the parameter pairs $(M_{\rm b},M_{\rm o})$,
$(L_{\rm b},L_{\rm o})$ and $(\sigma^2_{\rm b},\sigma^2_{\rm o})$ that will be considered in this study.
First, values of \mbox{$(M_{\rm b}, M_{\rm o}) \ge 10$} lead to AR functions that are practically Gaussian, so we will not
consider values beyond $10$. Values of $(L_{\rm b},L_{\rm o})$ should be large enough compared to the grid size
$(h_{\rm b},h_{\rm o})$ (at least \mbox{$L_{\rm b}/h_{\rm b}\geq 1$} and \mbox{$L_{\rm o}/h_{\rm o}\geq 1$})
in order to avoid large discretisation errors in the
finite-difference representation of the diffusion operator. Ideally, the parameters
should be chosen to provide the optimal fit to our available estimate of the error covariances
(with \mbox{$(L_{\rm b},L_{\rm o})$} and \mbox{$(\sigma^2_{\rm b},\sigma^2_{\rm o})$}
made spatially dependent in general). Background-error correlations are often specified as
quasi-Gaussian functions (large values of \mbox{$(M_{\rm b}, M_{\rm o})$}).
The reason for this choice can be mainly computational;
{\it i.e.}, efficient models, like diffusion, exist for applying quasi-Gaussian functions
\citep{Gaspari_1999,Weaver_2001,Purser_2003}.
Another reason is that quasi-Gaussian
functions are sufficiently regular that they can be differentiated, which is important for defining
cross-variable (multivariate) covariances in atmospheric and ocean data assimilation
\citep{Daley_1991,Derber_1999,Weaver_2005}. In comparison, estimates of the spatial
correlations of observation error often display a sharp decrease at short range
and slow decay at longer range \citep{Waller_2016,Waller_2016b,Michel_2018},
which with an AR function is best modelled with a small value of $(M_{\rm b}, M_{\rm o})$.
In view of these remarks, the case where \mbox{$M_{\rm o} < M_{\rm b}$} seems to be of particular
interest. Nevertheless, both this case and the case where \mbox{$M_{\rm o} \ge M_{\rm b}$} will
be considered as different data-sets may give rise to different error characteristics.

\section{Conditioning of the preconditioned linear system}
\label{sec_general_conditioning}
In this section, we are interested in analysing the convergence of CG applied to
the linear system \eqref{eq_preconditioned_system} where the system matrix $\mathbf{S}$
depends on the diffusion-modelled covariance matrices described in the previous section.
In particular, we are interested in analysing the sensitivity of the convergence in terms
of the parameters of these covariance matrices. For this purpose, we will focus on the
condition number of $\mathbf{S}$, denoted $\kappa(\mathbf{S})$.

We start by recalling some results from \cite{Haben_2011} and \cite{Tabeart_2021} on the upper
bound of $\kappa(\mathbf{S})$ for general covariance matrices.
\begin{theorem}[Theorem 3 of~\cite{Tabeart_2021}]
\label{th_infnorm_bound}
Let \mbox{$\mathbf{B} \in \mathbb{R}^{n\times n}$} and \mbox{$\mathbf{R} \in \mathbb{R}^{m \times m}$}
be symmetric, positive-definite matrices.
Let \mbox{$\mathbf{U} = \mathbf{U}^{\transpose} \in \mathbb{R}^{n \times n}$} be the (unique) symmetric square root of
\mbox{$\mathbf{B}=\mathbf{U}\mathbf{U}^{\transpose} = \mathbf{U}^2$} and let
\mbox{$\mathbf{V} = \mathbf{V}^{\transpose}\in \mathbb{R}^{m \times m}$}
be the (unique) symmetric square root of
\mbox{$\mathbf{R}=\mathbf{V}\mathbf{V}^{\transpose} = \mathbf{V}^2$}. If \mbox{$\mathbf{H} \in \mathbb{R}^{m \times n}$}, with
\mbox{$m < n$}, and
\mbox{$\mathbf{S} = \mathbf{I}_n + \mathbf{U}\mathbf{H}^\transpose \mathbf{R}^{-1} \mathbf{H}\mathbf{U}$}, then
\begin{equation}
\label{general_upper_bound}
\kappa(\mathbf{S}) \leq 1 + \|\mathbf{V}^{-1}\mathbf{H}\mathbf{B}\mathbf{H}^{\transpose}\mathbf{V}^{-1} \|_{\infty}.
\end{equation}
\end{theorem}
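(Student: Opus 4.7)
The plan is to bound $\lambda_{\max}(\mathbf{S})$ and $\lambda_{\min}(\mathbf{S})$ separately and then combine. Writing $\mathbf{S} = \mathbf{I}_n + \mathbf{X}$ with $\mathbf{X} = \mathbf{U}\mathbf{H}^\transpose \mathbf{R}^{-1} \mathbf{H}\mathbf{U}$, the first observation is that $\mathbf{X}$ is symmetric positive semi-definite, since it has the form $(\mathbf{H}\mathbf{U})^\transpose \mathbf{R}^{-1} (\mathbf{H}\mathbf{U})$ with $\mathbf{R}^{-1}$ symmetric positive-definite and $\mathbf{U}$ symmetric. Hence every eigenvalue of $\mathbf{S}$ is at least $1$, so $\lambda_{\min}(\mathbf{S}) \geq 1$, while $\lambda_{\max}(\mathbf{S}) = 1 + \lambda_{\max}(\mathbf{X})$ because adding $\mathbf{I}_n$ simply shifts the spectrum. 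The task therefore reduces to controlling $\lambda_{\max}(\mathbf{X})$ by the $\infty$-norm that appears on the right-hand side of the claim.

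Next I would rewrite $\lambda_{\max}(\mathbf{X})$ in terms of the matrix $\mathbf{V}^{-1}\mathbf{H}\mathbf{B}\mathbf{H}^\transpose\mathbf{V}^{-1}$ by repeated use of the standard fact that $\mathbf{A}\mathbf{C}$ and $\mathbf{C}\mathbf{A}$ share the same nonzero eigenvalues. Taking $\mathbf{A} = \mathbf{U}$ and $\mathbf{C} = \mathbf{H}^\transpose \mathbf{R}^{-1} \mathbf{H}\mathbf{U}$ gives $\lambda_{\max}(\mathbf{X}) = \lambda_{\max}(\mathbf{H}^\transpose \mathbf{R}^{-1} \mathbf{H}\mathbf{B})$ via $\mathbf{U}^2=\mathbf{B}$; a second application moves $\mathbf{H}^\transpose$ across to obtain $\lambda_{\max}(\mathbf{R}^{-1}\mathbf{H}\mathbf{B}\mathbf{H}^\transpose)$; a third, after factoring $\mathbf{R}^{-1} = \mathbf{V}^{-1}\mathbf{V}^{-1}$, yields $\lambda_{\max}(\mathbf{V}^{-1}\mathbf{H}\mathbf{B}\mathbf{H}^\transpose\mathbf{V}^{-1})$. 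Because $\mathbf{X}$ is positive semi-definite, these maxima are all nonnegative, so the identifications are genuine equalities and not merely of nonzero spectra. The final matrix is symmetric positive semi-definite, so its largest eigenvalue coincides with its spectral ($2$-)norm.

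The only delicate ingredient is the passage from the $2$-norm to the $\infty$-norm. I would invoke the interpolation inequality $\|\mathbf{A}\|_2 \leq \sqrt{\|\mathbf{A}\|_1\, \|\mathbf{A}\|_\infty}$ combined with the identity $\|\mathbf{A}\|_1 = \|\mathbf{A}\|_\infty$ valid for symmetric $\mathbf{A}$, which collapses it to $\|\mathbf{A}\|_2 \leq \|\mathbf{A}\|_\infty$. Applied to $\mathbf{V}^{-1}\mathbf{H}\mathbf{B}\mathbf{H}^\transpose\mathbf{V}^{-1}$, this gives $\lambda_{\max}(\mathbf{X}) \leq \|\mathbf{V}^{-1}\mathbf{H}\mathbf{B}\mathbf{H}^\transpose\mathbf{V}^{-1}\|_\infty$, and combining with $\lambda_{\min}(\mathbf{S}) \geq 1$ produces the stated bound. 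The rest of the reasoning is essentially mechanical; the spectral-to-$\infty$-norm passage is the only non-routine step and is precisely what dictates the particular matrix norm appearing in the theorem.
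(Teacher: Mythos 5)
Your proof is correct and follows essentially the same route as the source this statement is quoted from: the paper itself gives no proof, citing Theorem~3 of \cite{Tabeart_2021}, whose argument likewise combines \mbox{$\lambda_{\min}(\mathbf{S})\geq 1$} with \mbox{$\lambda_{\max}(\mathbf{S}) = 1+\lambda_{\max}(\mathbf{U}\mathbf{H}^{\transpose}\mathbf{R}^{-1}\mathbf{H}\mathbf{U})$}, identifies the nonzero spectrum under cyclic permutation with that of the symmetric matrix $\mathbf{V}^{-1}\mathbf{H}\mathbf{B}\mathbf{H}^{\transpose}\mathbf{V}^{-1}$, and then passes from the spectral norm to the $\infty$-norm via \mbox{$\|\mathbf{A}\|_2 \leq \sqrt{\|\mathbf{A}\|_1\|\mathbf{A}\|_{\infty}} = \|\mathbf{A}\|_{\infty}$} for symmetric $\mathbf{A}$. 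There are no gaps: your care with the degenerate case (treating the cyclic identifications as equalities of $\lambda_{\max}$ because all matrices involved are positive semi-definite, so the maxima coincide even when the nonzero spectrum is empty) is exactly the point that makes the argument airtight.
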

The upper bound given in Equation~\eqref{general_upper_bound} is quite general
and it is not straightforward to understand
the impact of each component of the matrix $\mathbf{S}$ on the condition number.
Moreover, caution is needed when applying Equation~\eqref{general_upper_bound} in physical applications
as the eigen-decomposition of a covariance matrix will not
be independent of the physical units of the variables \cite[Section~4.3.4]{Tarantola_1987}. For this
reason, it is generally more meaningful to consider an eigen-decomposition on the associated
(non-dimensional) correlation matrix. In our idealised study, there is a single ``physical'' variable,
with arbitrary units, and direct observations
of that variable. Furthermore, both the background- and observation-error variances are taken to be constant,
so can be factored out of $\mathbf{B}$ and $\mathbf{R}$. In this case, the eigenvalues of $\mathbf{B}$ and $\mathbf{R}$
will be identical to those of their respective correlation matrices up to a multiplicative factor given
by their respective variances. We can thus continue to consider the eigen-decompositions of $\mathbf{B}$ and
$\mathbf{R}$ without ambiguity.

Other bounds have been proposed by \cite{Haben_2011} and \cite{Tabeart_2021} to understand
the impact of each covariance matrix.
\begin{theorem} [Corollary~1 of~\cite{Tabeart_2021}]
\label{th_naive_bound}
Let \mbox{$\mathbf{B} \in \mathbb{R}^{n\times n}$} and \mbox{$\mathbf{R} \in \mathbb{R}^{m \times m}$} be symmetric,
positive-definite matrices.
Let $\mathbf{U} = \mathbf{U}^\transpose \in \mathbb{R}^{n \times n}$ be the (unique) symmetric square root of
$\mathbf{B}=\mathbf{U}\mathbf{U}^{\transpose} = \mathbf{U}^2$.
If $\mathbf{H} \in \mathbb{R}^{m \times n}$, with $m < n$, and
$\mathbf{S} = \mathbf{I}_n + \mathbf{U}\mathbf{H}^\transpose \mathbf{R}^{-1} \mathbf{H}\mathbf{U}$, then
\begin{equation}
\label{eq_naive_bound}
\kappa(\mathbf{S})\leq 1+\frac{\lambda_{\rm max}(\mathbf{B})}{\lambda_{\rm min}(\mathbf{R})}\lambda_{\rm max}(\mathbf{H}\mathbf{H}^\transpose).
\end{equation}
\end{theorem}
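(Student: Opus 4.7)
The plan is to derive the bound as a direct consequence of two spectral observations: (i) $\mathbf{S}$ is of the form ``identity plus a symmetric positive semi-definite matrix,'' so its smallest eigenvalue is at least $1$; (ii) the remaining spectral mass can be bounded by two successive applications of the Rayleigh quotient inequality. I would first write $\mathbf{M} := \mathbf{U}\mathbf{H}^\transpose \mathbf{R}^{-1} \mathbf{H}\mathbf{U} = (\mathbf{R}^{-1/2}\mathbf{H}\mathbf{U})^\transpose(\mathbf{R}^{-1/2}\mathbf{H}\mathbf{U})$, which exhibits $\mathbf{M}$ as symmetric positive semi-definite and gives $\lambda_{\rm min}(\mathbf{S})\geq 1$. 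Consequently,
\[
\kappa(\mathbf{S}) \;=\; \frac{\lambda_{\rm max}(\mathbf{S})}{\lambda_{\rm min}(\mathbf{S})} \;\leq\; \lambda_{\rm max}(\mathbf{S}) \;=\; 1 + \lambda_{\rm max}(\mathbf{M}),
\]
so the proof reduces to bounding $\lambda_{\rm max}(\mathbf{M})$ by $\lambda_{\rm max}(\mathbf{B})\,\lambda_{\rm max}(\mathbf{H}\mathbf{H}^\transpose)/\lambda_{\rm min}(\mathbf{R})$.

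The second step uses the Rayleigh characterisation $\lambda_{\rm max}(\mathbf{M})=\max_{\mathbf{v}\neq \mathbf{0}} \mathbf{v}^\transpose\mathbf{M}\mathbf{v}/\mathbf{v}^\transpose\mathbf{v}$. Setting $\mathbf{w}=\mathbf{H}\mathbf{U}\mathbf{v}$, the numerator equals $\mathbf{w}^\transpose \mathbf{R}^{-1}\mathbf{w}\leq \lambda_{\rm max}(\mathbf{R}^{-1})\|\mathbf{w}\|_2^2 = \|\mathbf{H}\mathbf{U}\mathbf{v}\|_2^2/\lambda_{\rm min}(\mathbf{R})$, which yields $\lambda_{\rm max}(\mathbf{M}) \leq \lambda_{\rm max}(\mathbf{U}\mathbf{H}^\transpose\mathbf{H}\mathbf{U})/\lambda_{\rm min}(\mathbf{R})$. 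Because $\mathbf{U}\mathbf{H}^\transpose\mathbf{H}\mathbf{U}$ and $\mathbf{H}\mathbf{U}^2\mathbf{H}^\transpose = \mathbf{H}\mathbf{B}\mathbf{H}^\transpose$ share the same nonzero eigenvalues, this equals $\lambda_{\rm max}(\mathbf{H}\mathbf{B}\mathbf{H}^\transpose)/\lambda_{\rm min}(\mathbf{R})$. Applying the same Rayleigh argument a second time, now with $\mathbf{z}=\mathbf{H}^\transpose\mathbf{y}$ and with $\mathbf{B}$ playing the role of the SPD ``middle'' matrix, gives $\lambda_{\rm max}(\mathbf{H}\mathbf{B}\mathbf{H}^\transpose)\leq \lambda_{\rm max}(\mathbf{B})\,\lambda_{\rm max}(\mathbf{H}\mathbf{H}^\transpose)$. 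Chaining the two inequalities produces exactly the stated bound.

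No step is genuinely difficult. The only mild obstacle is the appeal to the eigenvalue identity $\lambda_{\rm max}(\mathbf{U}\mathbf{H}^\transpose\mathbf{H}\mathbf{U})=\lambda_{\rm max}(\mathbf{H}\mathbf{B}\mathbf{H}^\transpose)$: for $m<n$ the $n\times n$ matrix on the left is singular, whereas the $m\times m$ matrix on the right need not be, so one must rely on the fact that both are symmetric positive semi-definite and share their nonzero spectra, so their largest eigenvalues coincide. An equivalent, slightly cleaner route avoids this altogether by observing that $\lambda_{\rm max}(\mathbf{M})=\|\mathbf{R}^{-1/2}\mathbf{H}\mathbf{U}\|_2^2$ and invoking submultiplicativity of the spectral norm, $\|\mathbf{R}^{-1/2}\mathbf{H}\mathbf{U}\|_2\leq\|\mathbf{R}^{-1/2}\|_2\,\|\mathbf{H}\|_2\,\|\mathbf{U}\|_2$, which delivers the bound in a single line.
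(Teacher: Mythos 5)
Your proof is correct. Note that this statement is quoted in the paper from \cite{Tabeart_2021} (their Corollary~1) and is given \emph{without} proof here, so there is no in-paper argument to compare against line by line; your derivation is, however, the standard route and is consistent with the machinery the paper does use elsewhere. Each step checks out: the factorisation $\mathbf{M} = (\mathbf{R}^{-1/2}\mathbf{H}\mathbf{U})^{\transpose}(\mathbf{R}^{-1/2}\mathbf{H}\mathbf{U})$ legitimately uses the symmetric square root of the SPD matrix $\mathbf{R}$, giving $\lambda_{\rm min}(\mathbf{S}) \geq 1$ and hence $\kappa(\mathbf{S}) \leq \lambda_{\rm max}(\mathbf{S}) = 1 + \lambda_{\rm max}(\mathbf{M})$; the two Rayleigh-quotient bounds are sound; and your appeal to the fact that $\mathbf{U}\mathbf{H}^{\transpose}\mathbf{H}\mathbf{U}$ and $\mathbf{H}\mathbf{B}\mathbf{H}^{\transpose}$ share their nonzero spectra is exactly the identity $\Lambda^{*}(\mathbf{P}\mathbf{Q}) = \Lambda^{*}(\mathbf{Q}\mathbf{P})$ that the paper itself invokes (via Harville) in the proof of Lemma~\ref{lm_harville} --- and you correctly handle the subtlety that for $m<n$ the $n \times n$ matrix is singular, since both matrices are positive semi-definite so their largest eigenvalues agree. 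Two small remarks: your chain in fact passes through the sharper intermediate bound $\kappa(\mathbf{S}) \leq 1 + \lambda_{\rm max}(\mathbf{H}\mathbf{B}\mathbf{H}^{\transpose})/\lambda_{\rm min}(\mathbf{R})$, which retains some of the $\mathbf{B}$--$\mathbf{H}$ interaction that the final stated bound discards (the paper's discussion after Theorem~\ref{th_naive_bound} is precisely about this loss); and your one-line alternative via submultiplicativity, $\lambda_{\rm max}(\mathbf{M}) = \|\mathbf{R}^{-1/2}\mathbf{H}\mathbf{U}\|_2^2 \leq \lambda_{\rm max}(\mathbf{R}^{-1})\,\lambda_{\rm max}(\mathbf{H}\mathbf{H}^{\transpose})\,\lambda_{\rm max}(\mathbf{B})$, is also valid (using that $\mathbf{U}$ is symmetric positive definite so $\|\mathbf{U}\|_2^2 = \lambda_{\rm max}(\mathbf{B})$) and sidesteps the nonzero-spectrum lemma entirely.
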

The upper bound in Equation~\eqref{eq_naive_bound} attempts to separate the influence of $\mathbf{B}$ and $\mathbf{R}$ by considering
their eigenvalues separately. While this separation makes the bound easier to use for sensitivity analyses, it can also
degrade the accuracy of the bound. Indeed, compared to the bound given by Equation~\eqref{general_upper_bound}, the bound given
by Equation~\eqref{eq_naive_bound} does not account for any interaction between $\mathbf{B}$ and $\mathbf{R}^{-1}$, which results in a
more pessimistic bound.

As an example, we consider the case where both $\mathbf{B}$ and $\mathbf{R}$ are modelled using
diffusion operators as described in Section~\ref{sec_diffusion_operators}, and $\mathbf{H}$ is a
selection matrix. We consider a domain of length $2000$~km, composed of \mbox{$n=500$} points
that are equally spaced every \mbox{$h_{\rm b} = 4$}~km. We assume that direct observations are
available at every other grid point (\mbox{$m=250$} and \mbox{$h_{\rm o} = 8$}~km). We define
\mbox{$\widetilde{L}_{\rm b} = L_{\rm b}/h_{\rm b}$} and \mbox{$\widetilde{L}_{\rm o} = L_{\rm o}/h_{\rm o}$}
where \mbox{$L_{\rm b} = 60$}~km is fixed and $L_{\rm o}$ is allowed to vary.
Figure~\ref{fig_naive_bound} compares the two upper bounds from
Theorem~\ref{th_infnorm_bound} and Theorem~\ref{th_naive_bound} with the
exact condition number\footnote{The exact condition number is computed using results
from Theorem~\ref{th_eig_S} described later in Section~\ref{sec_general_conditioning}.}
for different parameter specifications in $\mathbf{B}$ and $\mathbf{R}$.
Figure~\ref{fig_naive_bound_left} shows the results for \mbox{$M_{\rm b}=8$} and \mbox{$M_{\rm o}=2$}, while
Figure~\ref{fig_naive_bound_right} shows the results for \mbox{$M_{\rm b}=2$} and \mbox{$M_{\rm o}=8$},
for values of \mbox{${\widetilde{L}_{\rm o}}/{\widetilde{L}_{\rm b}}$} ranging from $0.01$ to $2$.
While the bound of Theorem~\ref{th_infnorm_bound} matches closely the condition number for both settings,
the bound of Theorem~\ref{th_naive_bound} is far less accurate. This discrepancy shows that even though
the diffusion models use different parameters, the structural similarities between $\mathbf{B}$ and $\mathbf{R}$
lead to a crucial interaction between them. Therefore, separating the effect of these matrices in the
bound results in a pessimistic upper bound.

In this article, we are interested in obtaining explicit and accurate theoretical bounds on the condition number
in terms of the key parameters of the diffusion-modelled correlation operators presented in
Section~\ref{sec_properties_diffusion}. We restrict $\mathbf{H}$ to a
class of uniform selection operators, which are associated with uniformly distributed observations.
In Section~\ref{sec_selection_operator}, we study how these
selection operators interact with diffusion operators. Based on the results of these two sections,
we examine the conditioning
of $\mathbf{S}$ in Section~\ref{sec_structure_exact_cond}.
Additional results are derived in Section~\ref{sec_structure_approx_cond}
using a simplified matrix $\Shat$, which is equal to $\mathbf{S}$
when $\mathbf{H}$ is the identity matrix but is an approximation otherwise.
\begin{figure}[htb]
\centering
\begin{subfigure}[t]{0.48\textwidth}
\includegraphics[width=\textwidth]{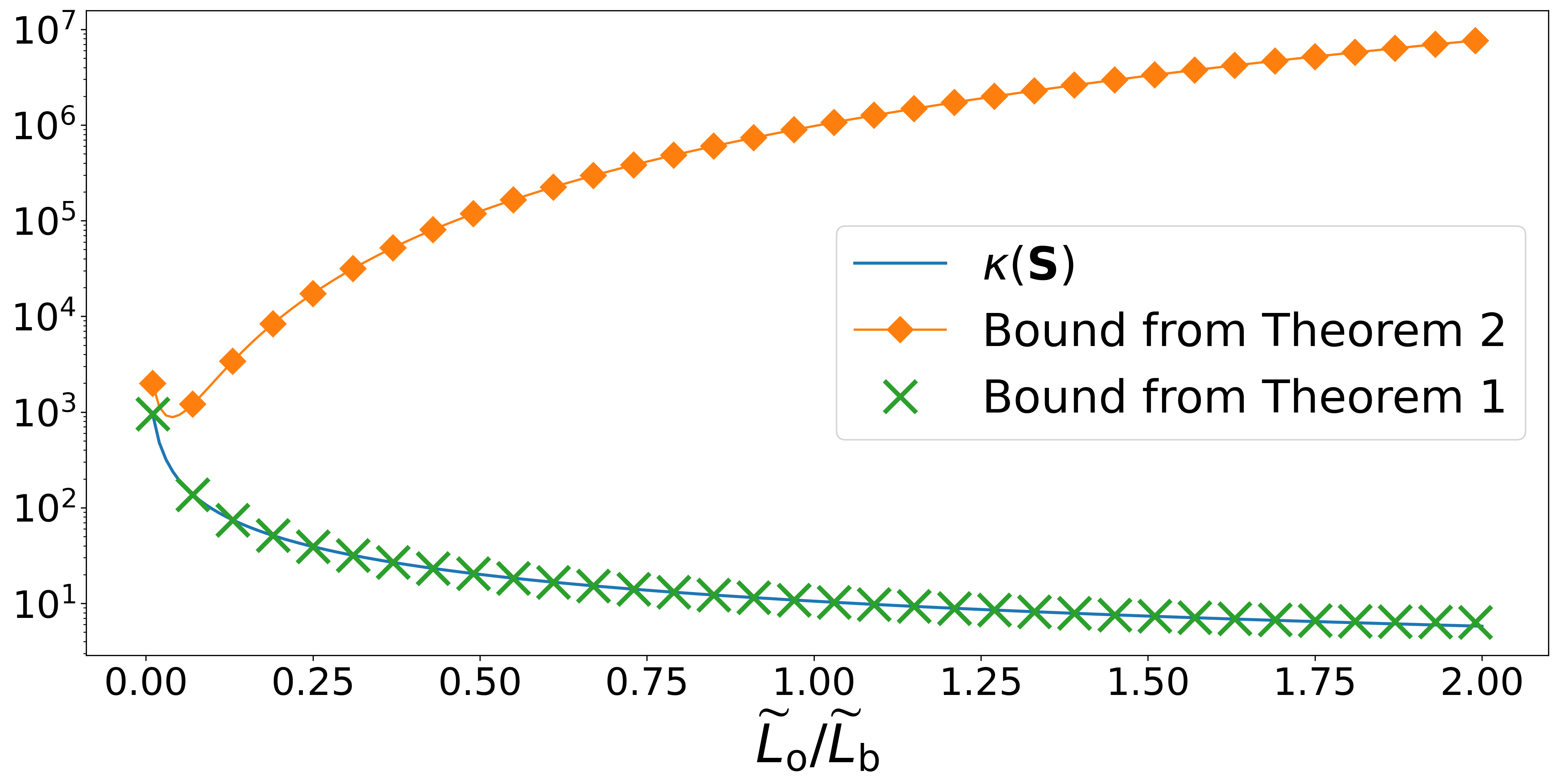}
\caption{The correlation function of the background error is Gaussian-like (\mbox{$M_{\rm b}=8$}),
while the correlation function of the observation error is a SOAR function (\mbox{$M_{\rm o}=2$}).}
\label{fig_naive_bound_left}
\end{subfigure}
\begin{subfigure}[t]{0.48\textwidth}
\includegraphics[width=\textwidth]{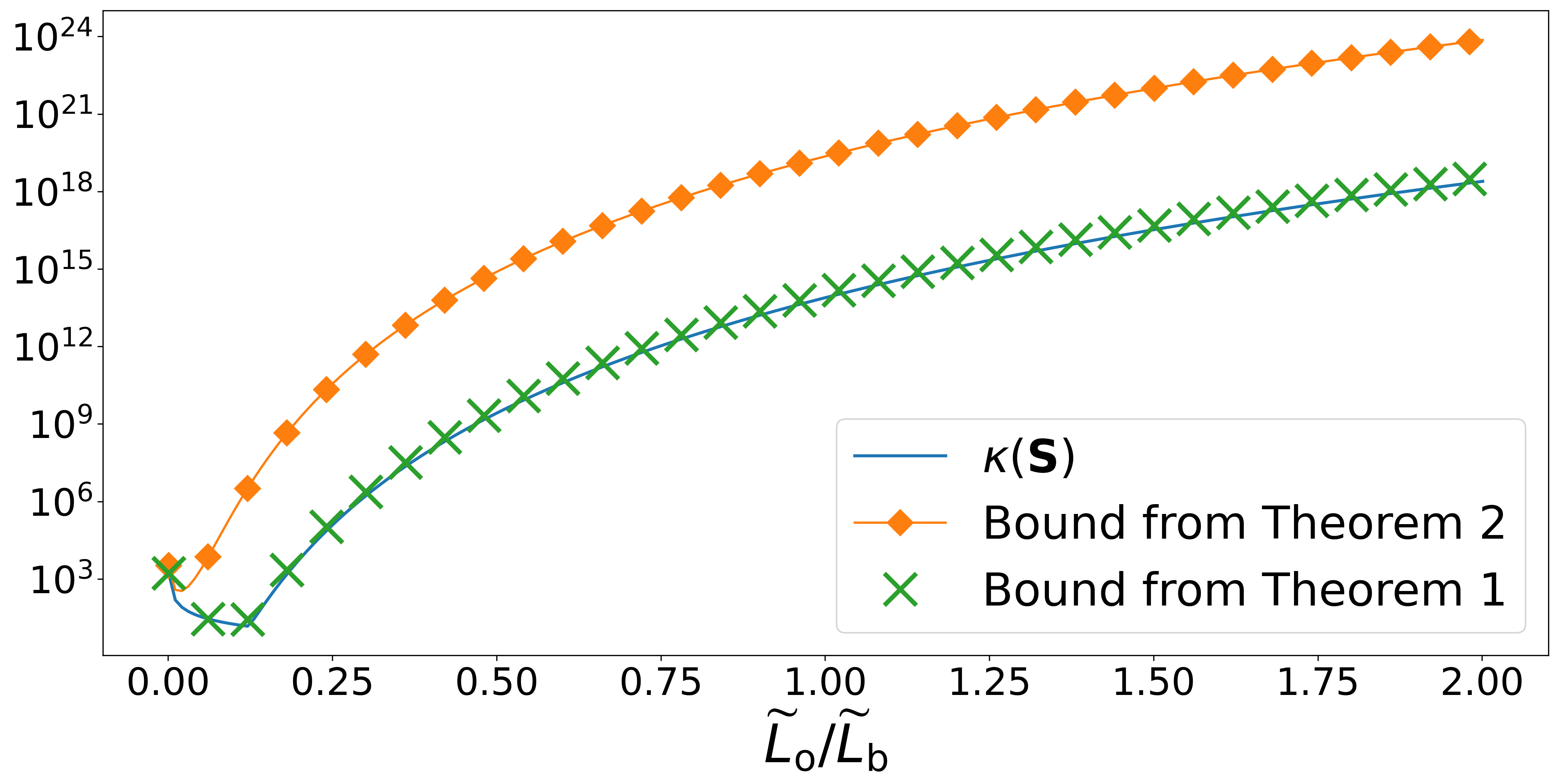}
\caption{The correlation function of the background error is a SOAR function (\mbox{$M_{\rm b}=2$}),
while the correlation function of the observation error is Gaussian-like (\mbox{$M_{\rm o}=8$}).}
\label{fig_naive_bound_right}
\end{subfigure}
\caption{Upper bounds on the condition number of $\mathbf{S}$ compared to the exact condition number
as a function of
\mbox{$\widetilde{L}_{\rm o}/\widetilde{L}_{\rm b} = L_{\rm o} h_{\rm b} / L_{\rm b} h_{\rm o}$}.}
\label{fig_naive_bound}
\end{figure}

\subsection{Spectral properties of diffusion operators}
\label{sec_properties_diffusion}

The diffusion-modelled covariance operators for $\mathbf{B}$ and $\mathbf{R}$
in Equations~\eqref{eq_Bmatrix_diffusion} and \eqref{eq_Rmatrix_diffusion}
are defined in terms of Laplacian matrices \mbox{$\mathbf{\Laplace}_{h_{\rm b}}\in \mathbb{R}^{n\times n}$}
and \mbox{$\mathbf{\Laplace}_{h_{\rm o}}\in \mathbb{R}^{m\times m}$}, formed from a centred finite-difference
discretisation of the Laplacian operator on a uniform grid of resolution
$h_{\rm b}$ and $h_{\rm o}$, respectively. On a periodic domain,
$\mathbf{\Laplace}_{h_{\rm b}}$ and $\mathbf{\Laplace}_{h_{\rm o}}$ are circulant
matrices\footnote{Each column (row) of a circulant matrix is a cyclic permutation of the previous column (row).}
that are \textit{tridiagonal} except for the first and last lines where
additional non-zero elements appear in the corners due to the periodic
boundary conditions. Likewise, the shifted Laplacian matrices \mbox{$\mathbf{T}_{\rm b}\in \mathbb{R}^{n\times n}$}
and \mbox{$\mathbf{T}_{\rm o}\in \mathbb{R}^{m\times m}$}
are circulant, near-tridiagonal matrices. Specifically, for $\mathbf{R}$, we have
\begin{equation}
\mathbf{\Delta}_{h_{\rm o}} \, = \, \frac{1}{h_{\rm o}^2}\left( \begin{matrix}
-2 & 1 & 0 &  & 0 & 1\\
1 & -2 &1 & 0 & & 0\\
0 & \ddots & \ddots & \ddots & &  \\
 & & \ddots & \ddots & \ddots & 0 \\
0 & & 0&1 &-2 & 1\\
1 & 0 &  & 0 & 1 & -2 \end{matrix} \right)
\label{eq_Laplacian}
\end{equation}
and thus
\begin{equation}
\mathbf{T}_{\rm o} \, = \, \left( \begin{matrix}
1+2\widetilde{L}_{\rm o}^2& -\widetilde{L}_{\rm o}^2 & 0 &  & 0 & -\widetilde{L}_{\rm o}^2\\
-\widetilde{L}_{\rm o}^2 & 1+2\widetilde{L}_{\rm o}^2&-\widetilde{L}_{\rm o}^2& 0 & & 0\\
0 & \ddots & \ddots & \ddots & &  \\
 & & \ddots & \ddots & \ddots & 0 \\
0 & & 0& -\widetilde{L}_{\rm o}^2 & 1+2\widetilde{L}_{\rm o}^2 & -\widetilde{L}_{\rm o}^2\\
-\widetilde{L}_{\rm o}^2& 0 &  & 0 & -\widetilde{L}_{\rm o}^2 & 1+2\widetilde{L}_{\rm o}^2 \end{matrix} \right)
\label{eq_T_periodic}
\end{equation}
where \mbox{$\widetilde{L}_{\rm o} = L_{\rm o}/h_{\rm o}$} is a non-dimensional parameter that roughly corresponds to
the number of grid points over which observation-error correlations are significant. The expressions for
 $\mathbf{\Laplace}_{h_{\rm b}}$ and $\mathbf{T}_{\rm b}$ are the same as Equations~\eqref{eq_Laplacian} and
\eqref{eq_T_periodic} with $(h_{\rm b},\widetilde{L}_{\rm b})$ instead of $(h_{\rm o}, \widetilde{L}_{\rm o})$.

An important property of circulant matrices is that, for a given size,
they all share the same eigenvectors, which form a Fourier basis \citep{Gray_2005}. Consequently,
their eigenvalues can be found by taking the discrete Fourier transform of one of the rows.
Let $[ \mathbf{f}^{(i)} ]_p$ be the $p$-th component of the $i$-th eigenvector of $\mathbf{T}_{\rm o}$
and let $\lambda_i$ be the corresponding eigenvalue:
\begin{equation}
\big[ \mathbf{f}^{(i)} \big]_p=\frac{1}{\sqrt{m}}e^{-2\pi j\frac{i p}{m}}, \qquad %
\lambda_i= 1+4\widetilde{L}^2\sin^2\! \left(\pi\frac{i}{m}\right), \;%
i,p\in\llbracket 0, m-1 \rrbracket,
\label{eq_eig_periodic}
\end{equation}
where $j$ denotes the imaginary unit \mbox{($j^2 = -1$)}.
Since $\mathbf{T}_{\rm o}$ is symmetric, its eigenvalues are real and each of them
is repeated twice; \textit{i.e.}, \mbox{$\lambda_i(\mathbf{T}_{\rm o})=\lambda_{m-i}(\mathbf{T}_{\rm o})$},
except $\lambda_0(\mathbf{T}_{\rm o})$ as the index $i$ stops at $m-1$.
If $m$ is even, $\lambda_{m/2}(\mathbf{T}_{\rm o})$ is also unique.

The covariance matrices $\mathbf{B}$ and $\mathbf{R}$ in Equations~\eqref{eq_Bmatrix_diffusion} and \eqref{eq_Rmatrix_diffusion}
are proportional to a power of the inverse of $\mathbf{T}_{\rm b}$ and $\mathbf{T}_{\rm o}$, respectively.
They are also circulant matrices and diagonal in a Fourier basis described by the vectors $\mathbf{f}^{(i)}$.
Two circulant covariance matrices with different parameter specifications 
then share the same eigenvectors as long as they are applied on the same domain.
In addition, since $\mathbf{T}_{\rm o}$ and $\mathbf{T}_{\rm b}$ are symmetric, the diffusion matrices are symmetric and their
eigenvalues are real and proportional to powers of the inverse of the eigenvalues
of $\mathbf{T}_{\rm o}$ and $\mathbf{T}_{\rm b}$. The eigenvalues of $\mathbf{B}$ and $\mathbf{R}$ are then
\begin{gather}
\lambda_i(\mathbf{B}) = \frac{\displaystyle \sigma_{\rm b}^2 \nu_{\rm b} L_{\rm b}}{\displaystyle h_{\rm b}} \lambda_{i}(\mathbf{T}_{\rm b})^{-M_{\rm b}} =  \sigma_{\rm b}^2\nu_{\rm b} \widetilde{L}_{\rm b}
\left[ 1+4\widetilde{L}_{\rm b}^2 \sin^2\! \left(\pi\frac{i}{n}\right)\right]^{-M_{\rm b}}, \quad i\in\llbracket 0, n-1\rrbracket,\label{eq_eig_B}\\
\lambda_i(\mathbf{R}) = \frac{\displaystyle \sigma_{\rm o}^2 \nu_{\rm o} L_{\rm o}}{\displaystyle h_{\rm b}} \lambda_{i}(\mathbf{T}_{\rm o})^{-M_{\rm o}} = \sigma_{\rm o}^2\nu_{\rm o} \widetilde{L}_{\rm o}
\left[ 1+4\widetilde{L}_{\rm o}^2 \sin^2\! \left(\pi\frac{i}{m}\right)\right]^{-M_{\rm o}}, \quad i\in\llbracket 0, m-1\rrbracket.
\label{eq_eig_R}
\end{gather}


\subsection{Influence of $\mathbf{H}$ as a uniform selection operator }
\label{sec_selection_operator}
In the previous section, we derived the eigenvalue spectra of the covariance matrices when they are defined
as circulant matrices. Our simplifying assumption that the covariance parameters are constant is crucial to ensure
the circulant property of the covariance matrices.
In this section, we exploit these results to analyse the spectrum of the matrix \mbox{$\mathbf{H}\mathbf{B}\mathbf{H}^{\transpose}$} that
appears in $\mathbf{S}$. To do so, we will assume that $\mathbf{H}$ is a \textit{uniform selection} operator;
\textit{i.e.} we have observations every $\zeta$ grid points where $\zeta$ is a positive integer. The total number of observations
is then given by \mbox{$m=n/\zeta$} assuming that $\zeta$ is a divisor of $n$. This assumption will allow us to analyse
the sensitivity of the diffusion parameters in a more explicit way since
 it preserves to some extent the structure of $\mathbf{B}$, as shown in the following lemma.

\begin{lemma}
Let \mbox{$\mathbf{B}\in\mathbb{R}^{n\times n}$} be a symmetric circulant matrix and let \mbox{$\mathbf{H}\in\mathbb{R}^{n\times m}$}
be a uniform selection operator where \mbox{$\zeta m= n$} with $\zeta$ a positive integer.
The matrix \mbox{$\mathbf{H}\mathbf{B}\mathbf{H}^{\transpose} \in \mathbb{R}^{m \times m}$}
is then a symmetric circulant matrix with eigenvalues
 \begin{equation}
 \lambda_i\big( \mathbf{H}\mathbf{B}\mathbf{H}^{\transpose} \big)= \frac{1}{\zeta}\sum_{r=0}^{\zeta-1} \lambda_{i+rm}(\mathbf{B}),
 \label{eq_eig_HBH_proof}
 \end{equation}
 for $i \in \llbracket 0,m-1 \rrbracket$.
\label{lm_HBH_circulant}
\end{lemma}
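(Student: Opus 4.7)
The plan is to proceed in two stages: first establish the circulant structure of $\mathbf{H}\mathbf{B}\mathbf{H}^{\transpose}$ by a direct inspection of its entries, and then derive the spectral formula via an aliasing argument using the discrete Fourier transform.

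For the first stage, I would label the action of the uniform selection operator explicitly: $[\mathbf{H}\mathbf{u}]_p = u_{p\zeta}$ for $p \in \llbracket 0, m-1 \rrbracket$, so that $[\mathbf{H}\mathbf{B}\mathbf{H}^{\transpose}]_{p,q} = B_{p\zeta,\, q\zeta}$. Because $\mathbf{B}$ is circulant on $\mathbb{Z}/n\mathbb{Z}$, its entries depend only on the index difference modulo $n$: $B_{p\zeta,q\zeta} = b_{(p-q)\zeta \bmod n}$ for the first-row sequence $\{b_k\}_{k=0}^{n-1}$. Since $n = \zeta m$, stepping $p$ and $q$ by one produces a cyclic permutation of the rows modulo $m$, so $\mathbf{H}\mathbf{B}\mathbf{H}^{\transpose}$ is circulant of size $m$. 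Symmetry follows from symmetry of $\mathbf{B}$.

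For the second stage, I would use that every symmetric circulant matrix of size $m$ is diagonal in the Fourier basis $[\mathbf{g}^{(i)}]_p = m^{-1/2} e^{-2\pi j ip/m}$, with eigenvalues obtained as the DFT of its first row, and similarly for $\mathbf{B}$ and its first row $\{b_k\}$. Writing $b_k$ via the inverse DFT of the eigenvalues of $\mathbf{B}$, namely $b_k = n^{-1}\sum_{\ell=0}^{n-1} \lambda_\ell(\mathbf{B})\, e^{2\pi j \ell k/n}$, and substituting into the DFT of the sampled sequence $\{b_{p\zeta}\}_{p=0}^{m-1}$ gives
\begin{equation}
\lambda_i(\mathbf{H}\mathbf{B}\mathbf{H}^{\transpose})
= \sum_{p=0}^{m-1} b_{p\zeta}\, e^{-2\pi j i p/m}
= \frac{1}{n} \sum_{\ell=0}^{n-1} \lambda_\ell(\mathbf{B}) \sum_{p=0}^{m-1} e^{2\pi j (\ell - i) p/m},
\end{equation}
using that $e^{2\pi j \ell p \zeta/n} = e^{2\pi j \ell p/m}$. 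The inner geometric sum equals $m$ when $\ell \equiv i \pmod{m}$ and vanishes otherwise, so only the $\zeta$ indices of the form $\ell = i + rm$ with $r \in \llbracket 0, \zeta-1 \rrbracket$ survive, yielding the claimed identity $\lambda_i(\mathbf{H}\mathbf{B}\mathbf{H}^{\transpose}) = \zeta^{-1} \sum_{r=0}^{\zeta-1} \lambda_{i+rm}(\mathbf{B})$.

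The only subtle step is the index bookkeeping in the aliasing argument, namely verifying that the selection-induced subsampling exactly matches the partition $\{i, i+m, \ldots, i+(\zeta-1)m\}$ of $\llbracket 0, n-1 \rrbracket$ modulo $m$; this is where the hypothesis $\zeta m = n$ is essential. Everything else reduces to standard properties of circulant matrices and the orthogonality of DFT exponentials, which I would state rather than rederive.
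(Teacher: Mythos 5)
Your proof is correct, and it reaches the result by a somewhat different route than the paper. The paper works entirely at the matrix level: it diagonalises $\mathbf{B} = \mathbf{F}_n \mathbf{\Lambda}_{\rm b} \mathbf{F}_n^{\rm H}$, computes the sampled Fourier matrix $\mathbf{H}\mathbf{F}_n$ explicitly, and recognises it as $\tfrac{1}{\sqrt{\zeta}}[\mathbf{F}_m \cdots \mathbf{F}_m]$, a concatenation of $\zeta$ copies of $\mathbf{F}_m$; the factorisation $\mathbf{H}\mathbf{B}\mathbf{H}^{\transpose} = \mathbf{F}_m \mathbf{\Lambda}_{\rm b}' \mathbf{F}_m^{\rm H}$ then delivers the circulant property and the eigenvalue average in one stroke. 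You instead split the work: you first verify the circulant structure by pure index bookkeeping ($[\mathbf{H}\mathbf{B}\mathbf{H}^{\transpose}]_{p,q} = b_{(p-q)\zeta \bmod n}$, which depends only on $(p-q) \bmod m$ because $n = \zeta m$), and then obtain the eigenvalues as the DFT of the first row of the subsampled matrix, inserting the inverse DFT of $\mathbf{B}$'s eigenvalues and invoking orthogonality of the exponentials so that only indices $\ell \equiv i \pmod m$ survive. The underlying mechanism — subsampling in space folds the spectrum by aliasing, with the geometric-sum orthogonality doing the selection — is the same in both arguments; what differs is the level of abstraction. Your version is more elementary and makes the circulant structure transparent without any Fourier machinery, at the cost of scalar index manipulations; the paper's version is slicker and, importantly, exhibits the shared eigenvector basis $\mathbf{F}_m$ explicitly, which is precisely what Lemma~\ref{lm_harville} reuses when it needs $\mathbf{H}\mathbf{B}\mathbf{H}^{\transpose}$ and $\mathbf{R}^{-1}$ to be simultaneously diagonalisable. (Your entrywise circulant argument of course also yields this, since all $m \times m$ circulants share the Fourier basis, so nothing downstream is lost.) Two cosmetic remarks: the sign conventions in your DFT pairs should be stated consistently, though symmetry of $\mathbf{B}$ makes all eigenvalues real with $\lambda_i = \lambda_{n-i}$, so no convention choice can break the final formula; and note the lemma's hypothesis $\mathbf{H} \in \mathbb{R}^{n \times m}$ is a typo in the paper for $\mathbb{R}^{m \times n}$ — your reading $[\mathbf{H}\mathbf{u}]_p = u_{p\zeta}$ is the intended one.
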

\begin{proof}
 Let the notation $[\mathbf{A}]_{p,q}$ denote the element on the $p$-th row and $q$-th column of any matrix $\mathbf{A}$.
 As $\mathbf{B}$ is a circulant matrix, it can be diagonalised in a Fourier basis:
 \begin{equation}
 \mathbf{B}\, = \, \mathbf{F}_n \mathbf{\Lambda}_{\rm b} \mathbf{F}_n^{\rm H},
 \label{eq_B_diagonal}
 \end{equation}
 where $\mathbf{\Lambda}_{\rm b}$ is a diagonal matrix, and the elements of $\mathbf{F}_n$ are the (normalized) $n$-th roots of unity:
 \begin{equation}
\forall p,q \in \llbracket 0,n-1 \rrbracket, \quad \big[ \mathbf{F}_n \big]_{p,q} = \frac{1}{\sqrt{n}} \omega_n^{pq}
\hspace{2mm} \text{ with } \hspace{1mm} \omega_n = e^{\frac{2\pi j}{n}}.
\nonumber
 \end{equation}
 The superscript ``H'' stands for conjugate (Hermitian) transpose and \mbox{$\mathbf{F}_n^{\rm H} = \mathbf{F}_n^{-1}$}.
 Starting from Equation~\eqref{eq_B_diagonal}, we have
  \begin{equation}
\mathbf{H} \mathbf{B} \mathbf{H}^{\transpose}=\mathbf{H}\mathbf{F}_n \mathbf{\Lambda}_{\rm b} \big( \mathbf{H}\mathbf{F}_n \big)^{\! \rm H}.
 \label{eq_HBH_decomp1}
 \end{equation}
The matrix $\mathbf{H}\mathbf{F}_n$ is of dimension \mbox{$m \times n$} and is composed of the rows of $\mathbf{F}_n$:
\begin{equation}
\forall p\in \llbracket 0,m-1 \rrbracket,  q\in \llbracket 0,n-1 \rrbracket
\quad \big[ \mathbf{H}\mathbf{F}_n \big]_{p,q} = \big[ \mathbf{F}_n \big]_{\zeta p,q}  = \frac{1}{\sqrt{n}} \omega_n^{\zeta pq}.
\nonumber
\end{equation}
The matrix \mbox{$\mathbf{H}\mathbf{F}_n$} can be linked to $\mathbf{F}_m$, the matrix that diagonalises circulant matrices of
dimension \mbox{$m\times m$}.
The elements of the latter are the $m$-th root of unity:
  \begin{equation}
\forall p,q \in \llbracket 0,m-1 \rrbracket, \quad \big[ \mathbf{F}_m \big]_{p,q} =
\frac{1}{\sqrt{m}} \omega_m^{pq} \hspace{2mm} \text{ with } \hspace{1mm} \omega_m = e^{\frac{2j\pi}{m}},
\nonumber
 \end{equation}
which, as \mbox{$n =\zeta m$},  can be linked to the $n$-th root of unity as
\mbox{$\omega_m = \omega_n^{\zeta}$}. Consequently, for the first $m$ columns of $\mathbf{H}\mathbf{F}_n$, we have
\begin{equation}
\forall p\in \llbracket 0,m-1 \rrbracket, q\in \llbracket 0,m-1 \rrbracket \quad \big[ \mathbf{H}\mathbf{F}_n \big]_{p,q}
= \frac{1}{\sqrt{n}} \omega_m^{pq} = \frac{1}{\sqrt{\zeta}}\big[ \mathbf{F}_m \big]_{p,q} .
\nonumber
  \end{equation}
 The other \mbox{$n-m$} columns of \mbox{$\mathbf{H}\mathbf{F}_n$} can be characterized by using the periodicity of
 the $m$-th root of unity: \mbox{$\omega_m^{pq} = \omega_m^{p(q+rm)}$} for any positive integer $r$.
Therefore, $\mathbf{H}\mathbf{F}_n$ is a matrix concatenated with $\zeta$ copies of $\mathbf{F}_m$:
 \begin{equation}
 \mathbf{H}\mathbf{F}_n \, = \,\frac{1}{\sqrt{\zeta}}  [  \mathbf{F}_m \cdots \mathbf{F}_m ].
\nonumber
 \end{equation}
 Equation~\eqref{eq_HBH_decomp1} can thus be rewritten as
  \begin{equation}
\mathbf{H} \mathbf{B} \mathbf{H}^{\transpose} \, = \, \mathbf{F}_m {\frac{1}{\sqrt{\zeta}}\begin{pmatrix} \mathbf{I}_m \cdots \mathbf{I}_m
\end{pmatrix}\mathbf{\Lambda}_{\rm b}
\begin{pmatrix}  \mathbf{I}_m \\ \vdots \\ \mathbf{I}_m \end{pmatrix}\frac{1}{\sqrt{\zeta}}}\, \mathbf{F}_m^{\rm H}
 \, = \, \mathbf{F}_m \mathbf{\Lambda}_{\rm b}' \mathbf{F}_m^{\rm H}
\label{eq_HBH_decomp2}
\nonumber
\end{equation}
where $\mathbf{\Lambda}_{\rm b}'$ is a diagonal matrix of dimension $m \times m$.
As \mbox{$\mathbf{H} \mathbf{B} \mathbf{H}^{\transpose}$}
is diagonal for the basis defined by the columns of $\mathbf{F}_m$, it is a circulant matrix.
Its eigenvalues are the elements of the diagonal matrix $\mathbf{\Lambda}_{\rm b}'$, which are given by Equation~\eqref{eq_eig_HBH_proof}.
\end{proof}

A matrix-vector product with \mbox{$\mathbf{H}\mathbf{B}\mathbf{H}^{\transpose}$}
is therefore in the range of the column vectors $\mathbf{f}_m^{(i)}$, weighted by
the \textit{average of $\zeta$ evenly-distributed eigenvalues} of $\mathbf{B}$.
 This result can be linked to the notion of \textit{aliasing}. Different vectors  from the Fourier basis
 of dimension $n$ (\textit{e.g.}, column $\mathbf{f}_n^{(i)}$) become indistinguishable and equal to the same frequency
 mode in the Fourier basis of dimension $m$ (\textit{e.g.}, column $\mathbf{f}_m^{(i)}$) once they are sub-sampled, as the
 highest frequencies cannot be resolved by the observation grid. This point
 is illustrated in Figure~\ref{fig_aliasing}, which shows multiple distinct columns of $\mathbf{F}_n$ that all
 take the same values on the observation grid, which are the values of a column
 of $\mathbf{F}_m$.
 As shown in Equation~\eqref{eq_eig_HBH_proof}, the weight associated with a frequency mode
 $\mathbf{f}_m^{(i)}$ in \mbox{$\mathbf{H}\mathbf{B}\mathbf{H}^{\transpose}$} is the average of the weights
 associated with the frequency modes  \mbox{$\mathbf{f}_n^{(i)},\; \mathbf{f}_n^{(i+m)}, \; ...,\; \mathbf{f}_n^{(i+(\zeta-1)m)}$}
 in $\mathbf{B}$, which become equal to  $\mathbf{f}_m^{(i)}$ once sub-sampled by $\mathbf{H}$.

  \begin{figure}[htb]
\includegraphics[width=0.8\textwidth]{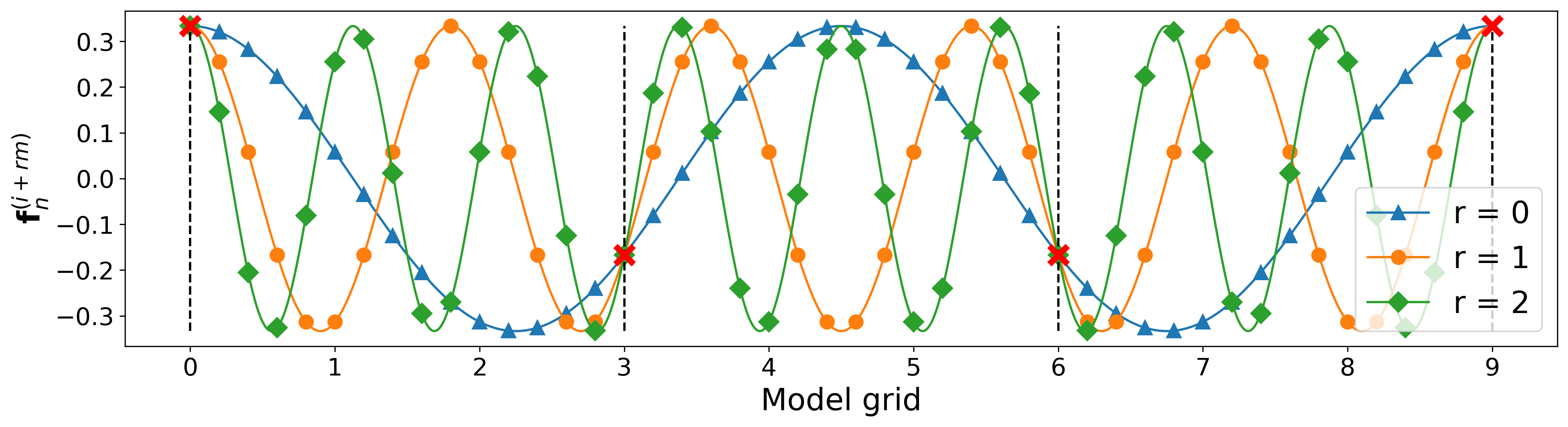}
\centering
\caption{Eigenvectors $\mathbf{f}_n^{(i+rm)}$ of a circulant matrix of size $n \times n$ with $n=9$, for $i=2$,
$r\in\llbracket 0,\zeta-1\rrbracket$ with $\zeta=3$ and $m= n/\zeta = 3$. On the model grid with $n$
points, these eigenvectors can be distinguished. On the observation grid with $m$ points
(marked by dashed lines), they all take the same values, which are the elements of the corresponding
eigenvector  $\mathbf{f}_m^{(i)}$ of a circulant matrix of size $m \times m$.
 Note that the first and last points in the figure represent the same grid point due to
 the periodic boundary conditions. The vectors have been plotted with a higher resolution than the one of
 the model grid  for the sake of clarity.}
\label{fig_aliasing}
\end{figure}

\subsection{Spectral properties of the $\mathbf{B}$-preconditioned Hessian matrix}
\label{sec_structure_exact_cond}

Lemma~\ref{lm_HBH_circulant} implies that $\mathbf{H}\mathbf{B}\mathbf{H}^{\transpose}$ shares the same eigenvectors
as any circulant matrix of size \mbox{$m \times m$}, including the matrices associated with the constant-parameter
diffusion operator applied on the uniform observation grid. This allows us to derive a new expression for the
spectrum of $\mathbf{S}$.
\begin{lemma}
\label{lm_harville}
Let $\mathbf{B}=\mathbf{U}\mathbf{U}^{\transpose}\in\mathbb{R}^{n\times n}$ and $\mathbf{R}\in\mathbb{R}^{m\times m}$, with
$m<n$, be circulant matrices and
let $\mathbf{H}\in\mathbb{R}^{n\times m}$ be a uniform selection operator.
If \mbox{$\mathbf{S} = \mathbf{I}_n + \mathbf{U}^{\transpose}\mathbf{H}^{\transpose}\mathbf{R}^{-1}\mathbf{H}\mathbf{U}$}
then the $i$-th eigenvalue of $\mathbf{S}$ is
\begin{equation}
\lambda_i\big( \mathbf{S} \big) = \begin{cases}
1 + \dfrac{\lambda_i\big( \mathbf{H}\mathbf{B}\mathbf{H}^{\transpose}\big)}{\lambda_i\big( \mathbf{R} \big)} & \text{ if } i \in \llbracket0,m-1\rrbracket,\\
1 &\text{ otherwise. }\end{cases}
\nonumber
\end{equation}
\end{lemma}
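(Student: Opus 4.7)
The plan is to show that $\mathbf{S}$ is a rank-$m$ perturbation of the identity, so $n-m$ of its eigenvalues are trivially $1$, and then to identify the remaining $m$ eigenvalues by exploiting the fact that both $\mathbf{H}\mathbf{B}\mathbf{H}^\transpose$ and $\mathbf{R}$ are circulant of size $m\times m$, so they are jointly diagonalisable in the Fourier basis $\mathbf{F}_m$.

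First I would set $\mathbf{M} = \mathbf{U}^\transpose\mathbf{H}^\transpose\mathbf{R}^{-1}\mathbf{H}\mathbf{U}\in\mathbb{R}^{n\times n}$ so that $\mathbf{S}=\mathbf{I}_n+\mathbf{M}$; since $\mathbf{H}$ has $m$ linearly independent rows (it is a uniform selection operator), $\mathbf{M}$ has rank at most $m$, so $\mathbf{M}$ carries a kernel of dimension at least $n-m$, which immediately yields eigenvalue $1$ for $\mathbf{S}$ with multiplicity at least $n-m$. To pin down the remaining eigenvalues, I would invoke the standard identity that, for $\mathbf{X}\in\mathbb{R}^{n\times m}$ and $\mathbf{Y}\in\mathbb{R}^{m\times n}$, the matrices $\mathbf{X}\mathbf{Y}$ and $\mathbf{Y}\mathbf{X}$ share the same non-zero eigenvalues (with multiplicities). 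Applying this with $\mathbf{X}=\mathbf{U}^\transpose\mathbf{H}^\transpose$ and $\mathbf{Y}=\mathbf{R}^{-1}\mathbf{H}\mathbf{U}$ shows that the non-zero eigenvalues of $\mathbf{M}$ coincide with those of the $m\times m$ matrix
\begin{equation}
\mathbf{R}^{-1}\mathbf{H}\mathbf{U}\mathbf{U}^\transpose\mathbf{H}^\transpose \;=\; \mathbf{R}^{-1}\mathbf{H}\mathbf{B}\mathbf{H}^\transpose.
\nonumber
\end{equation}

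The second key step is to exploit the circulant structure. By hypothesis, $\mathbf{R}$ is circulant; by Lemma~\ref{lm_HBH_circulant}, so is $\mathbf{H}\mathbf{B}\mathbf{H}^\transpose$. All circulant $m\times m$ matrices share the same eigenbasis (the Fourier basis $\mathbf{F}_m$ of columns $\mathbf{f}_m^{(i)}$), so we can write $\mathbf{R}=\mathbf{F}_m\mathbf{\Lambda}_{\mathbf{R}}\mathbf{F}_m^{\rm H}$ and $\mathbf{H}\mathbf{B}\mathbf{H}^\transpose=\mathbf{F}_m\mathbf{\Lambda}_{\HBH}\mathbf{F}_m^{\rm H}$ with diagonal matrices having entries $\lambda_i(\mathbf{R})$ and $\lambda_i(\mathbf{H}\mathbf{B}\mathbf{H}^\transpose)$ in the \emph{same} ordering indexed by $i\in\llbracket 0,m-1\rrbracket$. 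Consequently,
\begin{equation}
\mathbf{R}^{-1}\mathbf{H}\mathbf{B}\mathbf{H}^\transpose \;=\; \mathbf{F}_m\,\mathrm{diag}\!\left(\frac{\lambda_i(\mathbf{H}\mathbf{B}\mathbf{H}^\transpose)}{\lambda_i(\mathbf{R})}\right)\!\mathbf{F}_m^{\rm H},
\nonumber
\end{equation}
so its $m$ eigenvalues are exactly the ratios $\lambda_i(\mathbf{H}\mathbf{B}\mathbf{H}^\transpose)/\lambda_i(\mathbf{R})$.

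I would then verify that all these ratios are strictly positive — hence genuinely non-zero eigenvalues of $\mathbf{M}$ — which follows because $\mathbf{B}$ and $\mathbf{R}$ are SPD and $\mathbf{H}$ has full row rank, so $\mathbf{H}\mathbf{B}\mathbf{H}^\transpose$ is SPD. Combining the two observations, the spectrum of $\mathbf{M}$ is exactly the multiset $\{\lambda_i(\mathbf{H}\mathbf{B}\mathbf{H}^\transpose)/\lambda_i(\mathbf{R})\}_{i=0}^{m-1}$ together with the eigenvalue $0$ of multiplicity $n-m$. Adding $\mathbf{I}_n$ shifts every eigenvalue by one, which gives the claimed expression.

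The step requiring the most care is the identification of the indexing in the final statement: the labelling ``$i$-th eigenvalue'' must be consistent with the shared Fourier ordering of $\mathbf{R}$ and $\mathbf{H}\mathbf{B}\mathbf{H}^\transpose$. This is really the main conceptual subtlety — since $\mathbf{S}$ lives in $\mathbb{R}^{n\times n}$ whereas the ratios are indexed in $\llbracket 0,m-1\rrbracket$, one has to argue that the $m$ non-trivial eigenvectors of $\mathbf{S}$ can be taken to correspond bijectively to the Fourier modes $\mathbf{f}_m^{(i)}$ via the map $\mathbf{v}\mapsto \mathbf{U}^\transpose\mathbf{H}^\transpose\mathbf{v}$, which is injective on the range of $\mathbf{R}^{-1}$ by the full-row-rank property of $\mathbf{H}$ together with the invertibility of $\mathbf{U}$. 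Once this correspondence is set up, the rest of the proof is a direct combination of the two ingredients above.
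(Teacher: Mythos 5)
Your proof is correct and follows essentially the same route as the paper's: the nonzero-spectrum identity $\Lambda^{*}(\mathbf{X}\mathbf{Y})=\Lambda^{*}(\mathbf{Y}\mathbf{X})$ (the paper cites Harville) to reduce the problem to the $m\times m$ matrix $\mathbf{R}^{-1}\mathbf{H}\mathbf{B}\mathbf{H}^{\transpose}$, followed by joint diagonalisation of the two circulant factors in the Fourier basis via Lemma~\ref{lm_HBH_circulant}. Your additional checks --- the rank bound on $\mathbf{U}^{\transpose}\mathbf{H}^{\transpose}\mathbf{R}^{-1}\mathbf{H}\mathbf{U}$ and the strict positivity of the ratios $\lambda_i(\mathbf{H}\mathbf{B}\mathbf{H}^{\transpose})/\lambda_i(\mathbf{R})$, which pins the multiplicity of the eigenvalue $1$ at exactly $n-m$ --- are small completeness refinements that the paper leaves implicit, not a different approach.
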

\begin{proof}
Let $\Lambda^{*}(\cdot)$ denote the spectrum of a matrix without its zero eigenvalues.
For any matrix $\mathbf{P}$ and $\mathbf{Q}$ of respective sizes \mbox{$n\times m$} and \mbox{$m\times n$},
with \mbox{$n>m$}, we know that~\cite[Theorem~21.10.1]{Harville_1997}
\begin{equation}
\Lambda^{*}\big( \mathbf{P}\mathbf{Q} \big) = \Lambda^{*}\big( \mathbf{Q}\mathbf{P}\big).
\label{eq_theorem_harville}
\end{equation}
With \mbox{$\mathbf{P}=\mathbf{U}^\transpose\mathbf{H}^\transpose$ and $\mathbf{Q}=\mathbf{R}^{-1}\mathbf{H}\mathbf{U}$},
Equation~\eqref{eq_theorem_harville}
implies that \mbox{$\mathbf{U}^\transpose\mathbf{H}^\transpose\mathbf{R}^{-1}\mathbf{H}\mathbf{U}$} has at least \mbox{$n-m$}
eigenvalues equal to zero, and that
\begin{equation}
\Lambda^{*}\big( \mathbf{U}^\transpose\mathbf{H}^\transpose\mathbf{R}^{-1}\mathbf{H}\mathbf{U}\big) =
\Lambda^{*}\big( \mathbf{R}^{-1}\mathbf{H}\mathbf{B}\mathbf{H}^\transpose \big).
\label{eq_nonzero_spectrums}
\nonumber
\end{equation}
Therefore, $\mathbf{S}$ has an eigenvalue of $1$ with multiplicity of \mbox{$n-m$} and the remaining $m$ eigenvalues are
the elements of \mbox{$1 + \Lambda^{*}\big( \mathbf{R}^{-1}\mathbf{H}\mathbf{B}\mathbf{H}^\transpose \big)$}.
Since $\mathbf{H}\mathbf{B}\mathbf{H}^{\transpose}$ and $\mathbf{R}^{-1}$  are circulant matrices (see Lemma~\ref{lm_HBH_circulant}),
they are diagonalisable in the same basis. Therefore, the eigenvalues of their product is the product of their
respective eigenvalues:
\begin{equation}
\forall i \in \llbracket 0, m-1 \rrbracket, \quad   \lambda_i \big(\mathbf{R}^{-1}\mathbf{H}\mathbf{B}\mathbf{H}^{\transpose}\big)
 = \lambda_i \big( \mathbf{R}^{-1} \big)\lambda_i \big( \mathbf{H}\mathbf{B}\mathbf{H}^{\transpose} \big)
 = \dfrac{\lambda_i\big( \mathbf{H}\mathbf{B}\mathbf{H}^{\transpose}\big)}{ \lambda_i\big( \mathbf{R} \big)}.
\nonumber
\end{equation}
\end{proof}

We can now write the eigenvalues of $\mathbf{S}$ in terms of the constant parameters of the
\textit{diffusion-modelled covariance matrices} by using the results of
Lemma~\ref{lm_HBH_circulant} and Lemma~\ref{lm_harville}, and
the expressions for the eigenvalues of $\mathbf{B}$ and $\mathbf{R}$ given
by Equations~\eqref{eq_eig_B} and \eqref{eq_eig_R}, respectively.

\begin{theorem}
Let \mbox{$\mathbf{B}=\mathbf{U}\mathbf{U}^{\transpose}\in\mathbb{R}^{n\times n}$} and
\mbox{$\mathbf{R}\in\mathbb{R}^{m\times m}$} be
circulant matrices defined by Equations~\eqref{eq_Bmatrix_diffusion} and \eqref{eq_Rmatrix_diffusion}
where the shifted Laplacian matrices $\mathbf{T}_{\rm b}$ and $\mathbf{T}_{\rm o}$ are defined in
Section~\ref{sec_properties_diffusion}. Let $\mathbf{H}\in\mathbb{R}^{n\times m}$
be a uniform selection operator where \mbox{$\zeta m = n$} with $\zeta$ a positive integer.
If \mbox{$\mathbf{S} =\mathbf{I}_n + \mathbf{U}^{\transpose}\mathbf{H}^{\transpose}\mathbf{R}^{-1}\mathbf{H}\mathbf{U}$} then
\begin{equation}
\lambda_i(\mathbf{S}) = \begin{cases}
1 + \alpha \displaystyle \sum_{r=0}^{\zeta-1}\limits \dfrac{\left[ 1+4\widetilde{L}_{\rm o}^2
\sin^2 \! \left(\pi\frac{\displaystyle i}{\displaystyle m}\right)\right]^{M_{\rm o}}}{\left[ 1+4\widetilde{L}_{\rm b}^2
\sin^2 \! \left(\pi\frac{\displaystyle i+rm}{\displaystyle \zeta m}\right)\right]^{M_{\rm b}}}
&\text{ if } i \in \llbracket0,m-1\rrbracket,\\
1 &\text{ otherwise, }\end{cases}
\label{eq_eig_S}
\end{equation}
with
\mbox{$\widetilde{L}_{\rm o}=L_{\rm o}/h_{\rm o}$}, \mbox{$\widetilde{L}_{\rm b}=L_{\rm b}/h_{\rm b}$} and
\begin{equation}
\alpha=\frac{\displaystyle \sigma_{\rm b}^2 \nu_{\rm b} L_{\rm b}}{\displaystyle \sigma_{\rm o}^2\nu_{\rm o} L_{\rm o}}.
\nonumber
\end{equation}
\label{th_eig_S}
\end{theorem}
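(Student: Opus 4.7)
The plan is to chain three ingredients already assembled earlier in the paper: Lemma~\ref{lm_harville}, which reduces the spectrum of $\mathbf{S}$ to the spectrum of $\mathbf{R}^{-1}\mathbf{H}\mathbf{B}\mathbf{H}^{\transpose}$ supplemented by $n-m$ eigenvalues equal to one; Lemma~\ref{lm_HBH_circulant}, which expresses the eigenvalues of $\mathbf{H}\mathbf{B}\mathbf{H}^{\transpose}$ as averages of $\zeta$ aliased eigenvalues of $\mathbf{B}$; and the explicit diagonalisations~\eqref{eq_eig_B} and~\eqref{eq_eig_R} of the diffusion-modelled covariance matrices. Since $\mathbf{B}$ and $\mathbf{R}$ inherit the circulant structure of $\mathbf{T}_{\rm b}$ and $\mathbf{T}_{\rm o}$ (as noted in Section~\ref{sec_properties_diffusion}), and $\mathbf{H}$ is a uniform selection operator by hypothesis, all prerequisites of both lemmas are met.

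I would first invoke Lemma~\ref{lm_harville}. This immediately delivers the second case of~\eqref{eq_eig_S} (the eigenvalue $1$ with multiplicity $n-m$) and reduces the first case to evaluating the ratio $\lambda_i(\mathbf{H}\mathbf{B}\mathbf{H}^{\transpose})/\lambda_i(\mathbf{R})$ for $i\in\llbracket 0,m-1\rrbracket$. Next, I would substitute the numerator using Lemma~\ref{lm_HBH_circulant} together with~\eqref{eq_eig_B}, noting that $n = \zeta m$ so the argument of the sine reads $\pi(i+rm)/(\zeta m)$, and substitute the denominator using~\eqref{eq_eig_R}. Because $\lambda_i(\mathbf{R})$ sits in the denominator, its bracketed factor is raised to the power $+M_{\rm o}$, producing the $[1+4\widetilde{L}_{\rm o}^2\sin^2(\pi i/m)]^{M_{\rm o}}$ term that multiplies the aliasing sum in~\eqref{eq_eig_S}.

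The only step requiring care is the algebraic simplification of the scalar prefactor $\sigma_{\rm b}^2\nu_{\rm b}\widetilde{L}_{\rm b}/(\zeta\,\sigma_{\rm o}^2\nu_{\rm o}\widetilde{L}_{\rm o})$, which must reduce to the constant $\alpha$ in the statement. The key observation is that a uniform selection every $\zeta$ model grid points forces $\zeta = h_{\rm o}/h_{\rm b}$, so that $\widetilde{L}_{\rm b}/(\zeta\widetilde{L}_{\rm o}) = (L_{\rm b}/h_{\rm b})\cdot(h_{\rm o}/(\zeta L_{\rm o})) = L_{\rm b}/L_{\rm o}$ and the $1/\zeta$ factor coming from Lemma~\ref{lm_HBH_circulant} is absorbed cleanly, leaving exactly $\alpha = \sigma_{\rm b}^2\nu_{\rm b}L_{\rm b}/(\sigma_{\rm o}^2\nu_{\rm o}L_{\rm o})$. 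There is no conceptual obstacle beyond this bookkeeping: the theorem is essentially a repackaging of the two preceding lemmas together with the explicit Fourier diagonalisation of the diffusion operators, and I expect the write-up to be short.
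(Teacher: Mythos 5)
Your proposal is correct and takes essentially the same route as the paper's own proof: invoke Lemma~\ref{lm_harville} to obtain the $n-m$ unit eigenvalues and reduce the rest to the ratio $\lambda_i(\mathbf{H}\mathbf{B}\mathbf{H}^{\transpose})/\lambda_i(\mathbf{R})$, then evaluate that ratio with Lemma~\ref{lm_HBH_circulant} and the explicit spectra in Equations~\eqref{eq_eig_B} and \eqref{eq_eig_R}. Your explicit check that the $1/\zeta$ factor from Lemma~\ref{lm_HBH_circulant} is absorbed via $h_{\rm o}=\zeta h_{\rm b}$, yielding the prefactor $\alpha$, is precisely the bookkeeping the paper compresses into the closing remark that $h_{\rm o}=\zeta h_{\rm b}$ and $n=\zeta m$.
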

\begin{proof}
 Lemma~\ref{lm_harville} provides an expression for the eigenvalues of $\mathbf{S}$ in terms of the eigenvalues of $\mathbf{R}$
 and $\mathbf{H}\mathbf{B}\mathbf{H}^{\transpose}$. The eigenvalues of $\mathbf{R}$ are known from Equation~\eqref{eq_eig_R},
 and the eigenvalues of $\mathbf{H}\mathbf{B}\mathbf{H}^{\transpose}$ are obtained by applying the result of Lemma~\ref{lm_HBH_circulant}
 to the eigenvalues of $\mathbf{B}$ from Equation~\eqref{eq_eig_B}:
 \begin{equation}
 \lambda_i\left(\mathbf{H}\mathbf{B}\mathbf{H}^{\transpose}\right) = \frac{\sigma_{\rm b}^2 \nu_{\rm b} L_{\rm b}}{h_{\rm o}} \sum_{i=0}^{m-1} \left[ 1+4\widetilde{L}_{\rm b}^2 \sin^2\! \left(\pi\frac{i+rm}{\zeta m}\right)\right]^{-M_{\rm b}}, \quad i\in\llbracket 0, m-1\rrbracket,
 \end{equation}
 as \mbox{$h_{\rm o} = \zeta h_{\rm b}$} and \mbox{$n = \zeta m$}.
\end{proof}

From Theorem~\ref{th_eig_S}, it is clear that the minimum eigenvalue of $\mathbf{S}$, $\lambda_{\min}(\mathbf{S})$,
is equal to one when \mbox{$m<n$} (fewer observations than background variables), and is bounded below by 1 when \mbox{$m=n$}.
The condition number of $\mathbf{S}$ is thus bounded above by the maximum eigenvalue of $\mathbf{S}$, $\lambda_{\max}(\mathbf{S})$.
There is no simple analytical expression for $\lambda_{\max}(\mathbf{S})$ that can be deduced from Theorem~\ref{th_eig_S}.
However, we can already notice that $\lambda_{\max}(\mathbf{S})$ increases with increasing ratio between
the background- and observation-error variances, $\sigma_{\rm b}^2/\sigma_{\rm o}^2$.
This basic dependency of the condition number on the relative variances is
well known \citep{Andersson_2000,Haben_2011,Tabeart_2021}.

\subsubsection{Spectral properties of the simplified $\mathbf{B}$-preconditioned Hessian matrix}
\label{sec_structure_approx_cond}
  Analysing the sensitivity of $\lambda_{\max}\big( \mathbf{S} \big)$ with respect to the diffusion parameters
  is not straightforward from the expression given in Theorem~\ref{th_eig_S}.
We can obtain a better understanding by approximating
the effect of the matrix $\mathbf{H}\mathbf{B}\mathbf{H}^{\transpose}$ with a diffusion operator discretised directly
on the observation grid.
Specifically,
 let  \mbox{$\Bhat \in \mathbb{R}^{m \times m}$} be a diffusion operator with the same covariance parameters
 as \mbox{$\mathbf{B} \in \mathbb{R}^{n \times n}$} but discretised on the observation grid:
\begin{equation}
\Bhat  =\frac{ \sigma_{\rm b}^2 \nu_{\rm b} L_{\rm b}}{h_{\rm o}} \left( \mathbf{I}_m-L_{\rm b}^2 \mathbf{\Laplace}_{h_{\rm o}} \right)^{-M_{\rm b}}.
\label{eq_approx_Bhat}
\end{equation}
As  $\mathbf{H}\mathbf{B}\mathbf{H}^{\transpose}$ and $\Bhat$ are two spatial discretisations of the same continuous
diffusion operator, the difference between the two is solely  due to the error associated with the
spatial discretisation. If there are direct observations at each grid point ($\mathbf{H}=\mathbf{I}_n$),
both $\Bhat$  and $\mathbf{H}\mathbf{B}\mathbf{H}^{\transpose}$ are equal to $\mathbf{B}$ and there is no approximation.
If there are less observations than grid points, $\Bhat$ and $\mathbf{H}\mathbf{B}\mathbf{H}^{\transpose}$ still share
the same eigenvectors  as they are both circulant. However, they have slightly different eigenvalues due
to the different spatial discretisations. As $\Bhat$ is a diffusion operator, its eigenvalues can be deduced
from the results of Section~\ref{sec_properties_diffusion}:
\begin{equation}
\lambda_i(\Bhat) = \sigma_{\rm b}^2\nu_{\rm b} \Lbhat \left[ 1+4\Lbhat\sin^2\! \left(\pi\frac{i}{m}\right)\right]^{-M_{\rm b}},
\label{eq_eig_Bo}
\end{equation}
where \mbox{$\Lbhat = L_{\rm b}/h_{\rm o}$}. The eigenvalues of $\Bhat$ tend to overestimate the eigenvalues
of $\mathbf{H}\mathbf{B}\mathbf{H}^{\transpose}$, with maximum relative error occurring for the smallest eigenvalues,
as illustrated in Figure~\ref{fig_eig_Bo_HBHt} for the case where \mbox{$h_{\rm o}/h_{\rm b} = 2$}.

\begin{figure}[htb]
\includegraphics[width=1\textwidth]{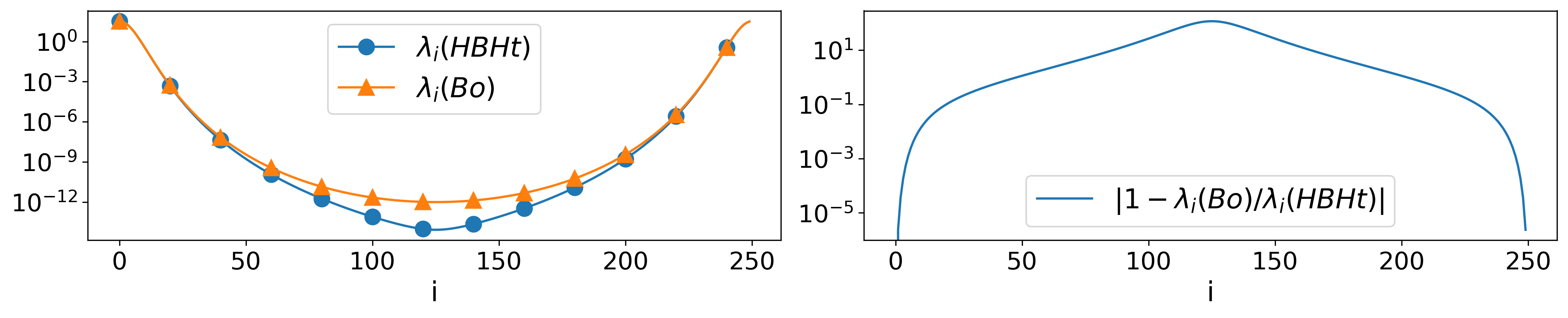}
\centering
\caption{The eigenvalues of $\mathbf{H}\mathbf{B}\mathbf{H}^{\transpose}$ and $\Bhat$ on a domain of $2000$ km,
with a model grid of \mbox{$n=500$} points and an observation at every other model grid point (\mbox{$m=250$}).
The correlation model represented by $\mathbf{B}$ is an AR function of order \mbox{$M_{\rm b}=8$} with a Daley
length-scale of \mbox{$D_{\rm b} = L_{\rm b} \sqrt{2M_{\rm b} -3} = 100$~km}.}
\label{fig_eig_Bo_HBHt}
\end{figure}

Let us recall that $\mathbf{S}$ has an eigenvalue of $1$ with multiplicity of \mbox{$n-m$} and that the remaining $m$ eigenvalues are
the elements of \mbox{$1 + \Lambda^{*}\big( \mathbf{R}^{-1}\mathbf{H}\mathbf{B}\mathbf{H}^\transpose \big)$} (see Lemma~\ref{lm_harville}).
Approximating the matrix $\mathbf{H}\mathbf{B}\mathbf{H}^\transpose$ by $\Bhat$, we are now interested in determining
the eigenvalues of the matrix
\begin{equation}
\Shat = \mathbf{I}_m + \mathbf{R}^{-1} \mathbf{B}_{\rm o}.
\nonumber
\end{equation}


\begin{theorem}
Let \mbox{$\Bhat \in\mathbb{R}^{m\times m}$} be the circulant matrix
defined by Equation~\eqref{eq_Bmatrix_diffusion} with \mbox{$h_{\rm b} = h_{\rm o}$},
and let \mbox{$\mathbf{R}\in\mathbb{R}^{m\times m}$} be the circulant matrix
defined by Equation~\eqref{eq_Rmatrix_diffusion}.
The shifted Laplacian matrices $\mathbf{T}_{\rm b}$ and $\mathbf{T}_{\rm o}$ are defined in
Section~\ref{sec_properties_diffusion}.
If \mbox{$\Shat =\mathbf{I}_m + \ \mathbf{R}^{-1}\Bhat$} then
\begin{equation}
\forall i\in \llbracket 0,m-1 \rrbracket, \quad\lambda_i\big( \Shat \big) =
1+\alpha \frac{\left[ 1+4\widetilde{L}_{\rm o}^2
\sin^2 \! \left( \pi\frac{\displaystyle i}{\displaystyle m}\right) \right]^{M_{\rm o}}}
{\left[ 1+4\Lbhat^2\sin^2 \! \left( \pi\frac{\displaystyle i}{\displaystyle m}\right) \right]^{M_{\rm b}}},
\label{eq_eig_So}
\end{equation}
with
\begin{equation}
\alpha= \frac{\displaystyle \sigma_{\rm b}^2\nu_{\rm b} \Lbhat}{\displaystyle \sigma_{\rm o}^2\nu_{\rm o} \widetilde{L}_{\rm o}}
= \frac{\displaystyle \sigma_{\rm b}^2\nu_{\rm b} {L}_{\rm b} }{\displaystyle \sigma_{\rm o}^2\nu_{\rm o} L_{\rm o}}.
\label{eq_alpha}
\end{equation}
\label{th_eig_So}
\nonumber
\end{theorem}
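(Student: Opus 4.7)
The plan is to exploit the observation that both $\mathbf{R}$ and $\Bhat$ are $m\times m$ circulant matrices discretised on the \emph{same} observation grid. By the standard fact recalled in Section~\ref{sec_properties_diffusion}, every circulant matrix of a given size is diagonalised in the same Fourier basis $\{\mathbf{f}^{(i)}\}_{i=0}^{m-1}$. Hence $\mathbf{R}$, $\mathbf{R}^{-1}$ and $\Bhat$ can be simultaneously diagonalised, they commute, and the eigenvalues of $\mathbf{R}^{-1}\Bhat$ are simply the pointwise ratios $\lambda_i(\Bhat)/\lambda_i(\mathbf{R})$ for $i\in\llbracket 0,m-1\rrbracket$. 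Adding the identity $\mathbf{I}_m$ then merely shifts each eigenvalue by $1$, giving
\begin{equation}
\lambda_i(\Shat)\;=\;1\,+\,\frac{\lambda_i(\Bhat)}{\lambda_i(\mathbf{R})}.
\nonumber
\end{equation}

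From here the result follows by direct substitution. The eigenvalues of $\mathbf{R}$ are given by Equation~\eqref{eq_eig_R}, and the eigenvalues of $\Bhat$ are given by Equation~\eqref{eq_eig_Bo}, which is just Equation~\eqref{eq_eig_B} applied with grid spacing $h_{\rm o}$ instead of $h_{\rm b}$ (hence $\Lbhat = L_{\rm b}/h_{\rm o}$ in place of $\widetilde{L}_{\rm b}$). Forming the ratio, the sine-based denominators from the inverse of $\mathbf{R}$ move up to become numerator factors with exponent $M_{\rm o}$, while the $\Bhat$ factor contributes the denominator with exponent $M_{\rm b}$. The prefactors combine to
\begin{equation}
\frac{\sigma_{\rm b}^2\nu_{\rm b}\Lbhat}{\sigma_{\rm o}^2\nu_{\rm o}\widetilde{L}_{\rm o}}
\;=\;\frac{\sigma_{\rm b}^2\nu_{\rm b}L_{\rm b}/h_{\rm o}}{\sigma_{\rm o}^2\nu_{\rm o}L_{\rm o}/h_{\rm o}}
\;=\;\frac{\sigma_{\rm b}^2\nu_{\rm b}L_{\rm b}}{\sigma_{\rm o}^2\nu_{\rm o}L_{\rm o}}\;=\;\alpha,
\nonumber
\end{equation}
which matches Equation~\eqref{eq_alpha}.

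There is essentially no hard step: the whole argument is ``two same-size circulants commute, so spectra multiply,'' followed by plugging in closed-form spectra that have already been derived. The only place where care is needed is bookkeeping of the two distinct normalisations of $L_{\rm b}$ (namely $\widetilde{L}_{\rm b}=L_{\rm b}/h_{\rm b}$ used inside $\mathbf{B}$ on the model grid versus $\Lbhat=L_{\rm b}/h_{\rm o}$ used inside $\Bhat$ on the observation grid), to make sure that the cancellation in the prefactor yields the stated $\alpha$ and that the $\Lbhat^2$ appears in the denominator of~\eqref{eq_eig_So}. Unlike Theorem~\ref{th_eig_S}, no aliasing sum over $r$ arises here precisely because $\Bhat$ is defined directly on the observation grid, so Lemma~\ref{lm_HBH_circulant} is not invoked.
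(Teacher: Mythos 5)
Your proof is correct and follows essentially the same route as the paper's: both arguments diagonalise $\mathbf{R}^{-1}$ and $\Bhat$ simultaneously in the shared Fourier basis of same-size circulant matrices, shift the resulting eigenvalue ratios by one, and substitute the closed-form spectra from Equations~\eqref{eq_eig_R} and \eqref{eq_eig_Bo}. Your additional bookkeeping remarks on $\Lbhat$ versus $\widetilde{L}_{\rm b}$ and the absence of an aliasing sum are accurate but not needed beyond what the paper states.
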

\begin{proof}
The eigenvalues of $\Shat$ are given by
\mbox{$\lambda_i\big( \Shat \big)=1+\lambda_i\big( \mathbf{R}^{-1}\Bhat \big)$}.
As $\Bhat$ and $\mathbf{R}^{-1}$ are both circulant matrices, they share the same eigenvectors. Therefore, we have
\begin{equation}
 \lambda_i\big( \Shat \big) = 1 +\frac{\lambda_i\big( \Bhat \big)}{\lambda_i \big( \mathbf{R} \big)}.
 \nonumber
\end{equation}
Replacing the eigenvalues of $\Bhat$ and $\mathbf{R}$ by their expressions provided by
Equations~\eqref{eq_eig_Bo} and \eqref{eq_eig_R}, respectively,
yields the expression for the eigenvalues of $\Shat$.
\end{proof}
The next theorem provides a bound on the condition number of $\Shat$ by using the expression
for the eigenvalues of $\Shat$.

\begin{theorem}
\label{th_diffusion_bound}
    Let $\Shat$ be defined as in Theorem~\ref{th_eig_So}, and let $\alpha$ be given by Equation~\eqref{eq_alpha}.
    Then, $\kappa( \Shat ) \leq \eta$ where
		\begin{equation}
		 \eta =\begin{cases}%
        1+\alpha\left(\frac{\displaystyle \widetilde{L}_{\rm o}^2}{\displaystyle M_{\rm b}}\right)^{M_{\rm b}}
        \left(\frac{\displaystyle M_{\rm o}}{\displaystyle \Lbhat^2}\right)^{M_{\rm o}}
        \left(\frac{\displaystyle M_{\rm b}-M_{\rm o}}{\displaystyle \widetilde{L}_{\rm o}^2 - \Lbhat^2}\right)^{M_{\rm b}-M_{\rm o}}  & \text{ if }%
        \hspace{2mm} \text{(i) } \widetilde{L}_{\rm o}^2M_{\rm o} > \Lbhat^2M_{\rm b};  \hspace{2mm} \text{(ii) } M_{\rm o}<M_{\rm b};
        \text{ and }  \\
        & \hspace{6mm}
        \text{(iii) } \Lbhat^2M_{\rm b}  - \widetilde{L}_{\rm o}^2M_{\rm o} > 4  \Lbhat^2 \widetilde{L}_{\rm o}^2 ( M_{\rm o} - M_{\rm b})
        \\[20pt]%
      1+\alpha \max\left\{\frac{\displaystyle (1+4\widetilde{L}_{\rm o}^2)^{M_{\rm o}}}{\displaystyle (1+4\Lbhat^2)^{M_{\rm b}}}, \; 1\right\}
      &\text{ otherwise.}%
        \end{cases}
        \label{eq_diffusion_bound}
		\end{equation}
\end{theorem}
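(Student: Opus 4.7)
The plan is to reduce the upper bound on $\kappa(\Shat)$ to a scalar optimization on $[0,1]$. Using the preceding theorem, I set $s_i = \sin^2(\pi i/m) \in [0,1]$ and define
\[
f(s) := \frac{(1+4\widetilde{L}_{\rm o}^2 s)^{M_{\rm o}}}{(1+4\Lbhat^2 s)^{M_{\rm b}}},
\]
so that $\lambda_i(\Shat) = 1 + \alpha f(s_i)$. Since $\alpha > 0$ and $f > 0$, every eigenvalue of $\Shat$ is at least $1$, hence $\lambda_{\min}(\Shat) \geq 1$ and
\[
\kappa(\Shat) \leq \lambda_{\max}(\Shat) \leq 1 + \alpha \max_{s \in [0,1]} f(s),
\]
where I used $\max_i f(s_i) \leq \max_{s\in[0,1]} f(s)$. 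The rest of the proof bounds the continuous maximum on the right, and the two branches of $\eta$ will correspond to whether the maximiser is interior to $(0,1)$ or at an endpoint.

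Differentiating $\log f$ yields
\[
\frac{d \log f}{ds} = \frac{4 M_{\rm o} \widetilde{L}_{\rm o}^2}{1+4\widetilde{L}_{\rm o}^2 s} - \frac{4 M_{\rm b} \Lbhat^2}{1+4\Lbhat^2 s},
\]
whose unique zero (when $M_{\rm o} \neq M_{\rm b}$) is
\[
s_* = \frac{\widetilde{L}_{\rm o}^2 M_{\rm o} - \Lbhat^2 M_{\rm b}}{4 \Lbhat^2 \widetilde{L}_{\rm o}^2 (M_{\rm b}-M_{\rm o})}.
\]
Conditions (i) and (ii) make the numerator and denominator of $s_*$ positive, so $s_* > 0$, and (iii) is precisely the inequality $s_* < 1$. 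Moreover (i) gives $(d\log f/ds)(0) > 0$ while (ii) gives $\log f(s) \to -\infty$ as $s \to \infty$, so $s_*$ is the global maximum of $f$ on $[0,\infty)$. To evaluate $f(s_*)$, I set $u = 1+4\widetilde{L}_{\rm o}^2 s_*$ and $v = 1+4\Lbhat^2 s_*$, combine the critical-point equation $M_{\rm o} \widetilde{L}_{\rm o}^2 v = M_{\rm b} \Lbhat^2 u$ with the identity $\Lbhat^2(u-1) = \widetilde{L}_{\rm o}^2(v-1)$ that follows from the definitions of $u$ and $v$, and solve the resulting linear $2\times 2$ system to obtain
\[
u = \frac{M_{\rm o}(\widetilde{L}_{\rm o}^2 - \Lbhat^2)}{\Lbhat^2 (M_{\rm b}-M_{\rm o})}, \qquad v = \frac{M_{\rm b}(\widetilde{L}_{\rm o}^2 - \Lbhat^2)}{\widetilde{L}_{\rm o}^2 (M_{\rm b}-M_{\rm o})}.
\]
Substituting into $f(s_*) = u^{M_{\rm o}}/v^{M_{\rm b}}$ and factoring out the common power $((\widetilde{L}_{\rm o}^2 - \Lbhat^2)/(M_{\rm b}-M_{\rm o}))^{M_{\rm o}-M_{\rm b}}$ produces the first branch of $\eta$.

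In the complementary case, at least one of (i)--(iii) fails. Since $d\log f/ds$ has at most one zero in $[0,\infty)$, a short sign analysis at $s=0$ and as $s\to\infty$ shows that in every sub-case either $f$ is monotone on $[0,1]$ or its unique stationary point in $[0,\infty)$ is a minimum (when $M_{\rm o}>M_{\rm b}$ and (i) fails) or lies outside $(0,1)$. In all these sub-cases the maximum of $f$ on $[0,1]$ is attained at an endpoint, so
\[
\max_{s\in[0,1]} f(s) = \max\{f(0),\, f(1)\} = \max\!\left\{1,\; \frac{(1+4\widetilde{L}_{\rm o}^2)^{M_{\rm o}}}{(1+4\Lbhat^2)^{M_{\rm b}}}\right\},
\]
which is the second branch. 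I expect the main technical obstacle to be the algebraic evaluation of $f(s_*)$ into the symmetric product form of the first branch; the monotonicity arguments in the "otherwise" case are elementary but do require bookkeeping on several sign conditions simultaneously.
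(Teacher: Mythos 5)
Your proposal is correct and follows essentially the same route as the paper's proof in Appendix~\ref{app_proof_theorem5}: bound $\kappa(\Shat)$ by $\lambda_{\max}(\Shat)$ using $\lambda_{\min}(\Shat)\geq 1$, relax $\sin^2(\pi i/m)$ to a continuous variable on $[0,1]$, and maximise the same rational function, with the two branches of $\eta$ arising from whether the stationary point $x^\ast = \bigl(\Lbhat^2 M_{\rm b} - \widetilde{L}_{\rm o}^2 M_{\rm o}\bigr)/\bigl(4\widetilde{L}_{\rm o}^2\Lbhat^2(M_{\rm o}-M_{\rm b})\bigr)$ is a feasible interior maximum (conditions (i)--(iii)) or the maximum sits at an endpoint. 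Your only deviations are cosmetic: you differentiate $\log f$ and argue maximality from the sign pattern of the (linear-in-$s$) derivative numerator, where the paper computes $\phi''(x^\ast)$ explicitly, and you evaluate $f(s_\ast)$ via the $(u,v)$ linear system rather than direct substitution --- both yield the identical closed form.
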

\begin{proof}
See Appendix \ref{app_proof_theorem5}.
\end{proof}

As explained further in this section, Theorem~\ref{th_diffusion_bound} describes the sensitivity of the
condition number of $\Shat$ to the diffusion parameters \textit{while keeping the bound sharp}. The sharpness
of the bound is illustrated  in Figure~\ref{fig_improved_bound}, where it is compared with
the exact condition number of $\mathbf{S}$
and the bounds given in Theorem~\ref{th_infnorm_bound} and Theorem~\ref{th_naive_bound}.
The exact condition number has been evaluated using the extreme eigenvalues taken from the full
spectrum of exact eigenvalues provided by the expression in Theorem~\ref{th_eig_S}.
These results show that
taking into account the specific structure of the covariance matrices improves the bound relative to the
one given in Theorem~\ref{th_naive_bound}.

\begin{figure}[tb]
\centering
\begin{subfigure}[t]{0.48\textwidth}
\includegraphics[width=\textwidth]{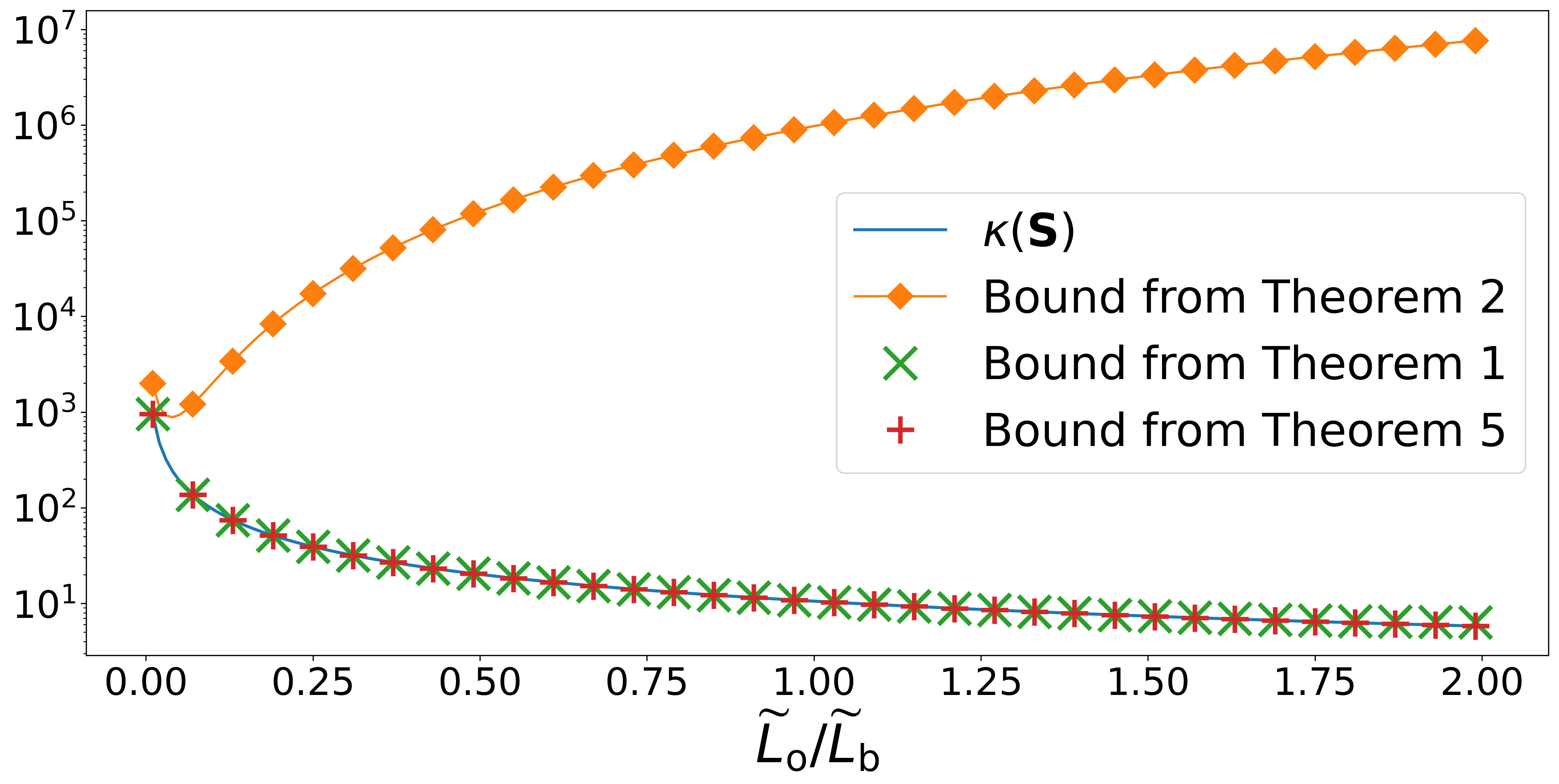}
\centering
\caption{\centering $M_{\rm o}=2$, $M_{\rm b}=8$}
\end{subfigure}
\begin{subfigure}[t]{0.48\textwidth}
\includegraphics[width=\textwidth]{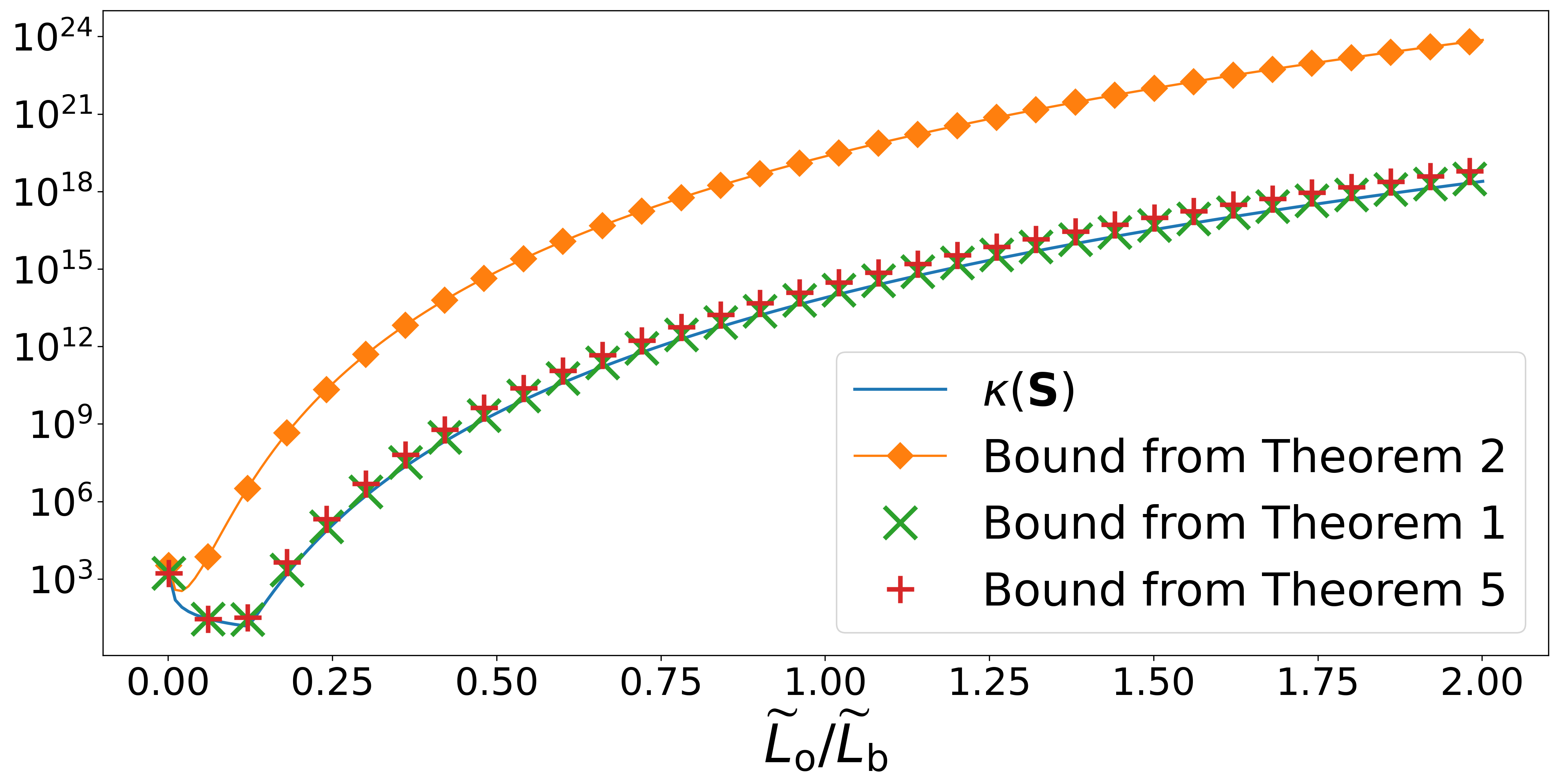}
\caption{\centering $M_{\rm o}=8$, $M_{\rm b}=2$}
\end{subfigure}
\caption{As in Figure~\ref{fig_naive_bound} but with an additional curve for the new bound from
Theorem~\ref{th_diffusion_bound}.}
\label{fig_improved_bound}
\end{figure}

We can further simplify the result in Theorem~\ref{th_diffusion_bound} by
considering $\eta$ as a function of $\widetilde{L}_{\rm o}$ only.
Corollary~\ref{co_minima_Mb_smaller} and Corollary~\ref{co_minima_Mo_smaller} below
characterize the variations of $\eta$ with respect to $\widetilde{L}_{\rm o}$
when \mbox{$M_{\rm o}\geq M_{\rm b}$} and \mbox{$M_{\rm o}< M_{\rm b}$}, respectively.

\begin{corollary}
Consider that $\eta$ defined in Theorem~\ref{th_diffusion_bound} is a function of $\widetilde{L}_{\rm o}$;
\mbox{$\eta = f(\widetilde{L}_{\rm o})$}. Assume that $\widetilde{L}_{\rm o}$
has a lower bound such that 
\mbox{$\widetilde{L}_{\rm o} \sqrt{2 M_{\rm o}-1} > 1/2$} and that condition~(ii)
from Theorem~\ref{th_diffusion_bound} is \emph{not} met; {\it i.e.}, $M_{\rm o} \geq M_{\rm b}$.
Then, the minimum of the function $f$ is unique and reached when
 \begin{equation}
 \big( 1+4\widetilde{L}_{\rm o}^2 \big)^{M_{\rm o}} = \big( 1+4\Lbhat^2 \big)^{M_{\rm b}}.
\label{eq_minima_Mb_smaller}
 \end{equation}
\label{co_minima_Mb_smaller}
\end{corollary}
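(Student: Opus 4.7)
The plan is to observe that in the regime $M_{\rm o} \ge M_{\rm b}$, condition (ii) of Theorem~\ref{th_diffusion_bound} fails, so the upper bound reduces to the ``otherwise'' branch,
\begin{equation}
\eta \;=\; 1 + \alpha \max\!\left\{\frac{(1+4\widetilde{L}_{\rm o}^2)^{M_{\rm o}}}{(1+4\Lbhat^2)^{M_{\rm b}}},\; 1\right\}.
\nonumber
\end{equation}
The key (and easily overlooked) observation is that $\alpha$ itself depends on $\widetilde{L}_{\rm o}$: since $L_{\rm o} = h_{\rm o}\widetilde{L}_{\rm o}$, one can write $\alpha = C/\widetilde{L}_{\rm o}$ with $C = \sigma_{\rm b}^2\nu_{\rm b}L_{\rm b}/(\sigma_{\rm o}^2\nu_{\rm o} h_{\rm o})>0$ independent of $\widetilde{L}_{\rm o}$. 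Hence $f(\widetilde{L}_{\rm o}) = 1 + (C/\widetilde{L}_{\rm o})\max\{g(\widetilde{L}_{\rm o}),1\}$ where $g(\widetilde{L}_{\rm o}) = (1+4\widetilde{L}_{\rm o}^2)^{M_{\rm o}}/(1+4\Lbhat^2)^{M_{\rm b}}$ is strictly increasing in $\widetilde{L}_{\rm o}$, so the equation $g(\widetilde{L}_{\rm o}) = 1$ has a unique positive root $\widetilde{L}_{\rm o}^\star$, which splits the domain into two regions.

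On the region $\widetilde{L}_{\rm o} \le \widetilde{L}_{\rm o}^\star$ (where the maximum equals $1$), we have $f(\widetilde{L}_{\rm o}) = 1 + C/\widetilde{L}_{\rm o}$, which is strictly decreasing. On the region $\widetilde{L}_{\rm o} \ge \widetilde{L}_{\rm o}^\star$, after absorbing the constant $(1+4\Lbhat^2)^{-M_{\rm b}}$ into $C$, the nontrivial factor is $h(\widetilde{L}_{\rm o}) = (1+4\widetilde{L}_{\rm o}^2)^{M_{\rm o}}/\widetilde{L}_{\rm o}$. Taking the logarithmic derivative,
\begin{equation}
\frac{h'(\widetilde{L}_{\rm o})}{h(\widetilde{L}_{\rm o})} \;=\; \frac{8 M_{\rm o} \widetilde{L}_{\rm o}}{1+4\widetilde{L}_{\rm o}^2} - \frac{1}{\widetilde{L}_{\rm o}} \;=\; \frac{4(2M_{\rm o}-1)\widetilde{L}_{\rm o}^2 - 1}{\widetilde{L}_{\rm o}(1+4\widetilde{L}_{\rm o}^2)}.
\nonumber
\end{equation}
This is strictly positive precisely when $\widetilde{L}_{\rm o}\sqrt{2M_{\rm o}-1} > 1/2$, which is the lower-bound hypothesis. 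So on the second region $f$ is strictly increasing.

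To conclude, $f$ is strictly decreasing for $\widetilde{L}_{\rm o} < \widetilde{L}_{\rm o}^\star$ and strictly increasing for $\widetilde{L}_{\rm o} > \widetilde{L}_{\rm o}^\star$, and the two pieces agree at $\widetilde{L}_{\rm o}^\star$ since both branches equal $1 + C/\widetilde{L}_{\rm o}^\star$ there (the max transitions continuously). Therefore the minimum is unique and attained exactly at $\widetilde{L}_{\rm o}^\star$, which is the solution of $(1+4\widetilde{L}_{\rm o}^2)^{M_{\rm o}} = (1+4\Lbhat^2)^{M_{\rm b}}$, matching Equation~\eqref{eq_minima_Mb_smaller}. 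The only real subtlety is the $\widetilde{L}_{\rm o}$-dependence of $\alpha$; once that is in hand, the rest is elementary calculus and the stated lower bound on $\widetilde{L}_{\rm o}$ is exactly what is required to rule out a spurious interior minimum in the high-$\widetilde{L}_{\rm o}$ region.
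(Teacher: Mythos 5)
Your proof is correct and takes essentially the same route as the paper's: you split the domain at the unique point where the $\max$ changes branch, show $f$ is strictly decreasing below it (via $\alpha \propto 1/\widetilde{L}_{\rm o}$, since $L_{\rm o} = h_{\rm o}\widetilde{L}_{\rm o}$) and strictly increasing above it under the hypothesis $\widetilde{L}_{\rm o}\sqrt{2M_{\rm o}-1} > 1/2$, with your logarithmic derivative producing exactly the paper's sign factor $4(2M_{\rm o}-1)\widetilde{L}_{\rm o}^2 - 1$. The only differences are cosmetic: you state explicitly the $\widetilde{L}_{\rm o}$-dependence of $\alpha$ and the continuity of $f$ at the transition point, both of which the paper's proof uses implicitly.
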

\begin{proof}
See Appendix~\ref{app_proof_corollary1}.
\end{proof}

\begin{corollary}
Consider that $\eta$ defined in Theorem~\ref{th_diffusion_bound} is a function of $\widetilde{L}_{\rm o}$;
\mbox{$\eta = f(\widetilde{L}_{\rm o})$}. Assume that condition~(ii) from Theorem~\ref{th_diffusion_bound}
is met (\mbox{$M_{\rm o} < M_{\rm b}$}). Assume further that condition~(iii) holds when condition~(i) is satisfied.
Then, the minimum of the function $f$ is unique and reached when
 \begin{equation}
  L_{\rm o} \sqrt{ 2M_{\rm o}-1 }=L_{\rm b} \sqrt{2M_{\rm b}-1}
\label{eq_minima_Mo_smaller}
 \end{equation}
\label{co_minima_Mo_smaller}
\end{corollary}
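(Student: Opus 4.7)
My plan is to reduce the problem to a one-variable calculus exercise in the ``interior'' branch of $\eta$ and then verify the critical point falls in that branch so the ``otherwise'' branch can be dismissed. Since condition~(ii) holds by hypothesis (\mbox{$M_{\rm o}<M_{\rm b}$}), and condition~(iii) is assumed to hold whenever (i) does, the piecewise definition in Theorem~\ref{th_diffusion_bound} reduces to just two pieces: the ``interior'' formula on the set \mbox{$\mathcal{I} = \{\widetilde{L}_{\rm o} : \widetilde{L}_{\rm o}^2 M_{\rm o} > \Lbhat^2 M_{\rm b}\}$}, and \mbox{$1 + \alpha \max\{(1+4\widetilde{L}_{\rm o}^2)^{M_{\rm o}}/(1+4\Lbhat^2)^{M_{\rm b}},\,1\}$} on its complement. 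Throughout, I will use the change of variables \mbox{$u = \widetilde{L}_{\rm o}^2$} and remember that \mbox{$\alpha \propto 1/\widetilde{L}_{\rm o} \propto u^{-1/2}$}, since $L_{\rm o}$ is the only factor in $\alpha$ that varies.

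On $\mathcal{I}$, I would write $f(\widetilde{L}_{\rm o}) - 1 = \alpha(\widetilde{L}_{\rm o})\,g(\widetilde{L}_{\rm o})$ with $g$ given by the interior formula, then compute $\tfrac{d}{du}\log\bigl(\alpha g\bigr)$. All $u$-independent factors drop out, leaving only
\[
\frac{d}{du}\log\bigl(\alpha g\bigr) \;=\; -\frac{1}{2u} + \frac{M_{\rm b}}{u} - \frac{M_{\rm b}-M_{\rm o}}{u-\Lbhat^2} \;=\; \frac{2M_{\rm b}-1}{2u} - \frac{M_{\rm b}-M_{\rm o}}{u-\Lbhat^2}.
\]
Setting this to zero and clearing denominators gives $u(2M_{\rm o}-1) = \Lbhat^2(2M_{\rm b}-1)$, which after multiplying by $h_{\rm o}^2$ is exactly Equation~\eqref{eq_minima_Mo_smaller}. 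Uniqueness on $\mathcal{I}$ follows because the two terms are strictly monotone in opposite directions on \mbox{$u > \Lbhat^2$}, so the derivative has a single sign change (minus to plus), confirming also that the critical point is a minimum rather than a maximum.

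Next I would verify that this critical point indeed lies in $\mathcal{I}$, i.e.\ that condition~(i) holds there. Substituting $u = \Lbhat^2(2M_{\rm b}-1)/(2M_{\rm o}-1)$ into $u M_{\rm o} > \Lbhat^2 M_{\rm b}$ reduces to $M_{\rm o}(2M_{\rm b}-1) > M_{\rm b}(2M_{\rm o}-1)$, i.e.\ $M_{\rm b} > M_{\rm o}$, which holds by hypothesis. So the minimiser proposed by the derivative calculation is consistent with the branch we used.

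The remaining and most delicate step is to show that nothing smaller occurs on the complement of $\mathcal{I}$. Here I would exploit continuity of $f$ at the boundary \mbox{$u = \Lbhat^2 M_{\rm b}/M_{\rm o}$}: a direct substitution in the interior formula shows $f = 1 + \alpha$ there, while on the ``otherwise'' side $f \ge 1 + \alpha$ because of the $\max\{\cdot,1\}$. Combined with the fact that $\alpha(\widetilde{L}_{\rm o})$ blows up as $\widetilde{L}_{\rm o} \to 0$ and the interior critical value is strictly below $1 + \alpha$ at the boundary (since the critical point is a strict interior minimum of a smooth branch), the global minimum must be the critical point identified above. I expect the main obstacle to be this last comparison at the boundary and in the small-$\widetilde{L}_{\rm o}$ regime, because it requires juggling the two branches of the $\max$ in the ``otherwise'' case and confirming that the $u$-dependent increase of $\alpha$ dominates any decrease in the rational factor; the interior derivative computation itself, though tedious, is routine.
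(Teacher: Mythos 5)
Your proposal is correct, and while its skeleton matches the paper's proof in Appendix~\ref{app_proof_corollary2} (dismiss the ``otherwise'' branch, then locate the minimum by one-variable calculus on the interior branch), both halves are executed differently, and in one case more economically. On the interior branch, your logarithmic differentiation in $u=\widetilde{L}_{\rm o}^2$ collapses the paper's product-rule computation (their Equation~\eqref{eq_derivative_eta}) to three terms and delivers $u(2M_{\rm o}-1)=\Lbhat^2(2M_{\rm b}-1)$ directly; your check that this root satisfies condition~(i) reduces to $M_{\rm b}>M_{\rm o}$, exactly as in the paper, where the stationary point $\Lbhat\sqrt{(2M_{\rm b}-1)/(2M_{\rm o}-1)}$ is shown to exceed $\Lbhat\sqrt{M_{\rm b}/M_{\rm o}}$. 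On the complement, the paper proves the stronger statement that the $\max$ in the ``otherwise'' formula equals $1$ there, via an auxiliary function $g(x)=(1+4xM_{\rm b}/M_{\rm o})^{M_{\rm o}/M_{\rm b}}-(1+4x)$ shown to be decreasing with $g(0)=0$, so that $f=1+\alpha$ exactly and is decreasing. You bypass this lemma entirely: the trivial bound $\max\{\cdot,1\}\geq 1$ gives $f\geq 1+\alpha$ pointwise, $\alpha\propto 1/\widetilde{L}_{\rm o}$ is strictly decreasing on the whole complement, and your observation that the interior formula evaluates to exactly $1+\alpha$ at the branch boundary $u=\Lbhat^2M_{\rm b}/M_{\rm o}$ (the three power factors do cancel to $1$ there --- I verified the substitution) glues the branches together, so every complement point has $f\geq 1+\alpha(\widetilde{L}_{\rm o}^{\rm bdry})>f(u^\ast)$. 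This trades the paper's monotonicity lemma for a one-line cancellation and is a genuine, if local, simplification; note that what your argument actually uses is monotonicity of $\alpha$ across the complement, not merely its divergence as $\widetilde{L}_{\rm o}\to 0$, so state it that way.

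One slip to repair: you justify the single sign change of the interior derivative by saying the two terms $\frac{2M_{\rm b}-1}{2u}$ and $\frac{M_{\rm b}-M_{\rm o}}{u-\Lbhat^2}$ are ``strictly monotone in opposite directions,'' but both are strictly decreasing in $u$. The conclusion nevertheless holds: clearing denominators makes the critical equation linear in $u$, hence it has at most one root; the derivative is negative at the left edge of the interior region (it equals $-M_{\rm o}/(2M_{\rm b}\Lbhat^2)<0$ at $u=\Lbhat^2M_{\rm b}/M_{\rm o}$) and behaves like $(2M_{\rm o}-1)/(2u)>0$ as $u\to\infty$, so the sign change is exactly one, from minus to plus. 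Equivalently, the ratio $\bigl(2u(M_{\rm b}-M_{\rm o})\bigr)/\bigl((2M_{\rm b}-1)(u-\Lbhat^2)\bigr)$ is strictly decreasing in $u$ and crosses $1$ exactly once. With that correction the argument is complete.
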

\begin{proof}
See Appendix~\ref{app_proof_corollary2}.
\end{proof}

Although the assumptions in Corollary~\ref{co_minima_Mb_smaller} and Corollary~\ref{co_minima_Mo_smaller}
appear restrictive, they exclude cases that are of limited practical interest.
In particular, values of $\widetilde{L}_{\rm o}$ that are smaller than the observation grid resolution $h_{\rm o}$
correspond to observation errors that are effectively uncorrelated. To avoid this case, we focus on values of
\mbox{$\widetilde{L}_{\rm o} \geq 1$}, which automatically fulfils the condition on the lower bound
in Corollary~\ref{co_minima_Mb_smaller}. Alternatively, the condition on the lower bound can be seen
as restricting the Stein length-scale \mbox{$\rho_{\rm o} = L_{\rm o} \sqrt{2M_{\rm o} - 1}$} (see Equation~\eqref{eq_rho})
to be greater than half the grid resolution.
In Corollary~\ref{co_minima_Mo_smaller}, we assume that condition~(iii)
of Theorem~\ref{th_diffusion_bound} holds if condition~(i) is satisfied. To simplify condition~(iii),
we can impose a practical bound on the value of $\Lbhat$. For instance, we are not interested in cases
where the length-scale $L_{\rm b}$ is smaller than $h_{\rm o}$, \emph{i.e.}, \mbox{$\Lbhat \le 1$}.
More generally, we can assume that $\Lbhat$ is bounded below
by a positive scalar $\widetilde{L}_{\rm min}$, which leads to the next corollary.
\begin{corollary}
\label{co_simplify_conditions}
Assume that there exists a positive scalar $\widetilde{L}_{\rm min}$ such that
 \begin{align}
  \Lbhat &\geq \widetilde{L}_{\rm min},
\end{align}
and that condition~(ii) of Theorem~\ref{th_diffusion_bound} holds.
Then, condition~(iii) of Theorem~\ref{th_diffusion_bound} simplifies to
 \begin{equation}
 M_{\rm b} \leq 2 \left( 1+4\widetilde{L}_{\rm min}^2 \right ).
 \label{eq:Mb_ineq}
\end{equation}
\end{corollary}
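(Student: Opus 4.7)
The plan is to rearrange condition~(iii) of Theorem~\ref{th_diffusion_bound} so that $\widetilde{L}_{\rm o}^2$ enters linearly, exploit the fact that condition~(i) imposes no upper bound on $\widetilde{L}_{\rm o}^2$, and then absorb the remaining parameter dependence using the hypotheses of the corollary.

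First, I would expand the right-hand side of~(iii) and collect terms in $\widetilde{L}_{\rm o}^2$ to rewrite it equivalently as
\begin{equation}
\Lbhat^2 M_{\rm b} \;+\; \widetilde{L}_{\rm o}^2 \bigl[\, 4\Lbhat^2(M_{\rm b}-M_{\rm o}) \,-\, M_{\rm o}\bigr] \;>\; 0.
\end{equation}
Since condition~(i) provides only a lower bound on $\widetilde{L}_{\rm o}^2$ and allows it to be arbitrarily large, requiring~(iii) to hold throughout the admissible range forces the bracketed coefficient to be non-negative (take $\widetilde{L}_{\rm o}^2 \to \infty$); conversely, if the bracket is non-negative then $\Lbhat^2 M_{\rm b}>0$ makes the inequality strict for every $\widetilde{L}_{\rm o}^2>0$. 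Thus ``(iii) whenever~(i)'' collapses to
\begin{equation}
4\Lbhat^2(M_{\rm b}-M_{\rm o}) \;\geq\; M_{\rm o}, \qquad \text{i.e.,} \qquad \frac{M_{\rm b}}{M_{\rm b}-M_{\rm o}} \;\leq\; 1 + 4\Lbhat^2.
\end{equation}

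Second, I would invoke condition~(ii) together with the convention of Section~\ref{sec_diffusion_operators}, in which $M_{\rm b}$ and $M_{\rm o}$ are taken to be even integers so that the diffusion operator admits the square-root factorisation used for sampling. Under this convention, $M_{\rm b}>M_{\rm o}$ forces $M_{\rm b}-M_{\rm o}\geq 2$, and hence $M_{\rm b}/(M_{\rm b}-M_{\rm o}) \leq M_{\rm b}/2$, so a sufficient condition is $M_{\rm b} \leq 2(1+4\Lbhat^2)$. Finally, the hypothesis $\Lbhat \geq \widetilde{L}_{\rm min}$ gives $1+4\Lbhat^2 \geq 1+4\widetilde{L}_{\rm min}^2$, so the stated bound $M_{\rm b} \leq 2(1+4\widetilde{L}_{\rm min}^2)$ implies $M_{\rm b} \leq 2(1+4\Lbhat^2)$, closing the chain.

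The main subtlety is recognising that condition~(i) leaves $\widetilde{L}_{\rm o}^2$ free to grow without bound: this is what reduces the $\widetilde{L}_{\rm o}$-dependent inequality~(iii) to a simple constraint on $M_{\rm b}$, $M_{\rm o}$, and $\Lbhat$. Once this is in place, the factor of~$2$ in the bound emerges naturally from the worst-case gap $M_{\rm b}-M_{\rm o}=2$ enforced by the even-$M$ convention, and the remaining steps are purely algebraic manipulations together with a monotonicity argument in $\Lbhat$.
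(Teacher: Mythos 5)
Your proof is correct and follows essentially the same path as the paper's: rearrange condition~(iii) into an inequality linear in $\widetilde{L}_{\rm o}^2$, invoke the even-integer convention on $M_{\rm o}$ and $M_{\rm b}$ to get the worst-case gap $M_{\rm b}-M_{\rm o}\geq 2$, and absorb $\Lbhat \geq \widetilde{L}_{\rm min}$ by monotonicity. Your one refinement is to eliminate $\widetilde{L}_{\rm o}$ exactly at the outset---observing that condition~(i) leaves it unbounded above, so uniform validity of (iii) is \emph{equivalent} to $4\Lbhat^2(M_{\rm b}-M_{\rm o}) \geq M_{\rm o}$---whereas the paper carries $\widetilde{L}_{\rm o}$ through and simply discards the positive term $M_{\rm b}\,\widetilde{L}_{\rm min}^2/\widetilde{L}_{\rm o}^2$ at the final step; the two discards amount to the same thing, so the arguments coincide in substance.
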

\begin{proof}
Let us define \mbox{$r_M = M_{\rm o} / M_{\rm b}$}. Using the assumption on the length-scale, condition~(iii)
of Theorem~\ref{th_diffusion_bound} can be rewritten as
\begin{align*}
\widetilde{L}_{\rm min}^2  +  4 \widetilde{L}_{\rm min}^2 \widetilde{L}_{\rm o}^2 ( 1 - r_{M} )
- \widetilde{L}_{\rm o}^2 r_M \ge 0.
\end{align*}
As $M_{\rm o}$ and $M_{\rm b}$ are assumed to be even integers and as condition (ii)
is met (\mbox{$M_{\rm o}< M_{\rm b}$}), we know
that \mbox{$r_M \le (M_{\rm b} -2) / M_{\rm b}$}. Using this relation, we obtain that
\begin{align*}
 M_{\rm b} \widetilde{L}_{\rm min}^2  +  8 \widetilde{L}_{\rm min}^2 \widetilde{L}_{\rm o}^2
 - \widetilde{L}_{\rm o}^2 (M_{\rm b} - 2)  \ge 0  &,
\end{align*}
which can be rearranged to give
\begin{align*}
M_{\rm b} \left (1 - {\widetilde{L}_{\rm min}^2}/{\widetilde{L}_{\rm o}^2} \right )
 \le 2 \left ( 1 + 4 \widetilde{L}_{\rm min}^2 \right). &
\end{align*}
Since ${\widetilde{L}_{\rm min}^2}/{\widetilde{L}_{\rm o}^2}$ is positive, we obtain the
inequality \eqref{eq:Mb_ineq}.
\end{proof}
Taking \mbox{$\widetilde{L}_{\rm min} = 1$} in Equation~\eqref{eq:Mb_ineq} results in \mbox{$M_{\rm b} \leq 10$}.
Increasing $M_{\rm b}$ beyond 10 has little practical value
as the correlation function is already approximately Gaussian with this value.

For the case where $\mathbf{B}$ and $\mathbf{R}$ are modelled with SOAR functions (\mbox{$M_{\rm b}=M_{\rm o}=2$}),
\cite{Tabeart_2021} point out that, for fixed $L_{\rm b}$, the minimum of their upper bound for the condition number
of the $\mathbf{B}$-preconditioned Hessian matrix is found by setting \mbox{$L_{\rm o}=L_{\rm b}$}.
Corollary~\ref{co_minima_Mb_smaller} and Corollary~\ref{co_minima_Mo_smaller} confirm this result and extend it
to other AR functions (\mbox{$M_{\rm o}=M_{\rm b}>2$}). They also cover cases where the order of the AR functions
differs between $\mathbf{B}$ and $\mathbf{R}$ (\mbox{$M_{\rm b} \neq M_{\rm o}$}), in which case the function defining
the upper bound on the condition number, \mbox{$\eta = f(\widetilde{L}_{\rm o})$}, does not reach its minimum value
when \mbox{$L_{\rm o}= L_{\rm b}$}.

If \mbox{$M_{\rm o} > M_{\rm b}$ then $\widetilde{L}_{\rm o}$} can be much smaller than $\Lbhat$ to attain the minimum
of the function $f(\widetilde{L}_{\rm o})$. For example, if \mbox{$M_{\rm b}=2$}, \mbox{$M_{\rm o}=10$}
and \mbox{$\widetilde{L}_{\rm o}=1.5$},
then $\Lbhat$ needs to be 158 to satisfy Equation~\eqref{eq_minima_Mb_smaller} of Corollary~\ref{co_minima_Mb_smaller}.
The correlation functions with fixed values of $\big( 1+4\widetilde{L}^2 \big)^{M}$ have very
different range for low values of $M$ as illustrated in Figure~\ref{fig_matching_cond_length}a.
On the other hand, if \mbox{$M_{\rm o} < M_{\rm b}$} then Corollary~\ref{co_minima_Mo_smaller} states that the minimum value
is attained when the Stein length-scales \mbox{$\rho_{\rm o} = L_{\rm o} \sqrt{2M_{\rm o} - 1}$}
and  \mbox{$\rho_{\rm b} = L_{\rm b} \sqrt{2M_{\rm b} - 1}$} are equal.
Note that, unlike condition \eqref{eq_minima_Mb_smaller}, condition \eqref{eq_minima_Mo_smaller} is independent of $h_{\rm o}$.
As shown in Figure~\ref{fig_matching_cond_length}b, the correlation functions with fixed values of $\rho$
are very similar for different values of $M$.

 \begin{figure}[h]
\begin{subfigure}[t]{\textwidth}
\centering
\includegraphics[width = 0.9\textwidth]{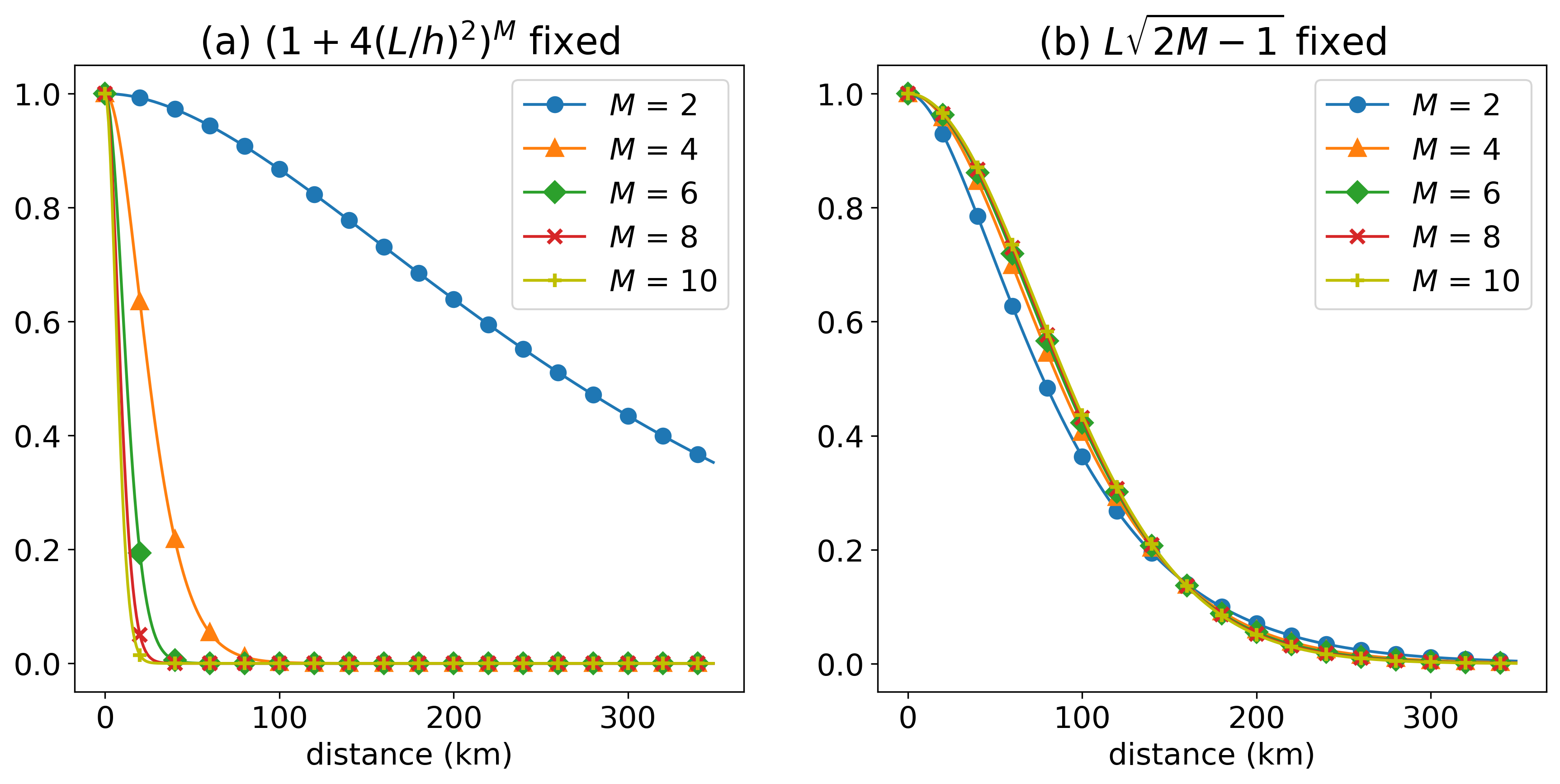}
\end{subfigure}
\caption{AR correlation functions (Equation \eqref{eq:c}) displayed for different values of $M$. For each $M$, the
value of $\widetilde{L}$ is chosen such that the quantities
(a) $\big( 1+4\widetilde{L}^2 \big)^{M}$ and (b) $L\sqrt{2M-1}$
from Corollary~\ref{co_minima_Mb_smaller} and Corollary~\ref{co_minima_Mo_smaller}, respectively,
are kept constant.
The corresponding values of \mbox{$L$}, the Stein length-scale \mbox{$\rho = L\sqrt{2M-1}$},
and the Daley length-scale \mbox{$D = L\sqrt{2M-3}$}
can be found in Table~\ref{tab:length_scales}.}
\label{fig_matching_cond_length}
\end{figure}

\begin{table}[h]
\centering
\renewcommand{\arraystretch}{1.3}
\begin{tabular}{|c|c|c|c|c|c|c|}
\hline
 & \multicolumn{3}{c|}{$(1+4\widetilde{L}^2)^M$ fixed} &  \multicolumn{3}{c|}{$L\sqrt{2M-1}$ fixed}\\
\hline
$M$  & $L \; \textrm{(km)} $ & $\rho \; \textrm{(km)}$ & $D \;\textrm{(km)}$  & $L  \; \textrm{(km)}$ & $\rho \; \textrm{(km)}$ & $D \;\textrm{(km)}$ \\
\hline
2
& 158.1 & 273.8 & 158.1
&  46.2 & 80.0 & 46.2 \\
\hline
4
& 8.9 & 23.5 & 19.9
&  30.2 & 80.0 & 67.6 \\
\hline
6
& 3.4 & 11.2 & 10.1
&  24.1 & 80.0 & 72.4 \\
\hline
8
& 2.0 & 7.7 & 7.4
&  20.7 & 80.0 & 74.5 \\
\hline
10
& 1.5 & 6.5 & 6.2
&  18.3 & 80.0 & 75.7 \\
\hline
\end{tabular}
\caption{ Values of the length-scale parameter \mbox{$L = \widetilde{L}h$} where \mbox{$h=1$}~km,
the Stein length-scale \mbox{$\rho = L \sqrt{2M-1}$}, and
the Daley length-scale \mbox{$D = L \sqrt{2M-3}$} associated with the curves in Figure~\ref{fig_matching_cond_length}.
The fixed values of $\big( 1+4\widetilde{L}^2 \big)^{M}$ and $L\sqrt{2M-1}$ are $10^{10}$ and $80$~km, respectively.}
\label{tab:length_scales}
\end{table}

For an alternative interpretation of Corollary \ref{co_minima_Mb_smaller} and Corollary \ref{co_minima_Mo_smaller}, we can recast
Equations~\eqref{eq_minima_Mb_smaller} and \eqref{eq_minima_Mo_smaller} in terms of the Daley length-scales
$D_{\rm o}$ and $D_{\rm b}$ (Equation~\eqref{eq_D}), which is the length-scale parameter we will use to interpret the numerical
experiments in the following sections.
Assuming \mbox{$M_{\rm b} > 1$} and \mbox{$M_{\rm o} > 1$}, we have
\begin{equation}
L_{\rm b} = \frac{D_{\rm b}}{\sqrt{2 M_{\rm b} - 3}} \hspace{4mm} \mbox{and}
\hspace{4mm} L_{\rm o} = \frac{D_{\rm o}}{\sqrt{2 M_{\rm o} - 3}}.
\label{eq:def_D_recall}
\end{equation}
If \mbox{$M_{\rm o}=M_{\rm b}$} then the minimum is reached when \mbox{$D_{\rm o}=D_{\rm b}$}.

If \mbox{$M_{\rm o}> M_{\rm b}$} then Equation~\eqref{eq_minima_Mb_smaller} translates as
\begin{equation}
\left( 1+ \frac{4 D_{\rm o}^2}{h_{\rm o}^2 (2M_{\rm o}-3)} \right) ^{M_{\rm o}}  =
\left( 1+ \frac{4 D_{\rm b}^2}{h_{\rm o}^2 (2M_{\rm b}-3)} \right) ^{M_{\rm b}}.
\label{eq_minima_Mb_smaller_D}
\end{equation}
The location of the minima is very sensitive
to $M_{\rm o}$ and $M_{\rm b}$ since they appear as exponents in Equation~\eqref{eq_minima_Mb_smaller_D}.
While a small change of $M_{\rm o}$ from 8 to 10 would have limited effect on the
correlation function, it can have a drastic effect on the quantities in
Equation~\eqref{eq_minima_Mb_smaller_D}. In turn, this can significantly
affect the condition number (as seen from Theorem~\ref{th_diffusion_bound})
as well as the criteria in Corollary~\ref{co_minima_Mb_smaller}.
This property can be detrimental if ignored, but can also be exploited to our advantage
to improve the conditioning without significantly altering the correlation shape,
as will be illustrated in Section~\ref{sec_numerical_experiments}.

If \mbox{$M_{\rm o}<M_{\rm b}$} then Equation~\eqref{eq_minima_Mo_smaller} translates as
\begin{equation}
D_{\rm o}^2 \left( \frac{2M_{\rm o}-1}{2M_{\rm o}-3} \right) =
D_{\rm b}^2 \left( \frac{2M_{\rm b}-1}{2M_{\rm b}-3} \right),
\label{eq_minima_Mo_smaller_D}
\end{equation}
from which we can deduce that the minimum is reached when
\mbox{$D_{\rm o}<D_{\rm b}$} (cf. \mbox{$L_{\rm o}>L_{\rm b}$} and \mbox{$\rho_{\rm o} = \rho_{\rm b}$}).
This is evident from the last column
of Table~\ref{tab:length_scales}, which shows $D$ increasing with increasing $M$.
The ratio between $D_{\rm b}$ and $D_{\rm o}$ reaches at most $1.6$ for the limiting values of
\mbox{$M_{\rm o}=2$} and \mbox{$M_{\rm b}=10$}.

Equations~\eqref{eq_minima_Mb_smaller} and \eqref{eq_minima_Mo_smaller}
(respectively, Equations~\eqref{eq_minima_Mb_smaller_D} and \eqref{eq_minima_Mo_smaller_D})
provide simple criteria that can be used to adjust the value of $L_{\rm o}$ (respectively, $D_{\rm o}$)
to minimise the condition number of the $\mathbf{B}$-preconditioned
Hessian matrix. From this perspective, we can use Corollary~\ref{co_minima_Mb_smaller} and
Corollary~\ref{co_minima_Mo_smaller} as the basis of
a method for {\it reconditioning} observation-error covariance matrices that account
for spatial correlations with parametric functions from the Mat\'ern family.
This would be complementary to existing methods for reconditioning
sample covariance matrices, for example, to represent inter-channel error correlations
in satellite observations  \citep{Weston_2014,Tabeart_2020}.
For more complex problems, where the assumptions of these corollaries are not perfectly
satisfied, we can still use criteria \eqref{eq_minima_Mb_smaller} and \eqref{eq_minima_Mo_smaller}
(or \eqref{eq_minima_Mb_smaller_D} and \eqref{eq_minima_Mo_smaller_D})
as a guideline for adjusting covariance parameters in $\mathbf{B}$ and $\mathbf{R}$
to improve the conditioning of the $\mathbf{B}$-preconditioned Hessian matrix.

\subsubsection{Condition number estimates with correlated and uncorrelated observation errors}
\label{sec_exp_condition}

In this section, we compare the condition number of $\mathbf{S}$ for different values of the
correlation parameter pairs $(M_{\rm o}, D_{\rm o})$ and $(M_{\rm b}, D_{\rm b})$. The condition number $\kappa(\mathbf{S})$
is computed using the (exact) analytical expression of the eigenvalues of $\mathbf{S}$ given in Theorem~\ref{th_eig_S}.
In addition, we compute the exact `optimal' parameter pairs ({\it i.e.}, those that minimise the condition number)
and compare them with those predicted by the optimality criteria in Corollary~\ref{co_minima_Mb_smaller} and
Corollary~\ref{co_minima_Mo_smaller}.
As this theorem applies
to the matrix $\Shat$, and not $\mathbf{S}$, these optimality criteria are only exact when there is a direct observation
at each grid point.

In presenting the results, we choose to normalize $\kappa(\mathbf{S})$ by $\kappa \big( \mathbf{S}_{\rm u} \big)$ where $\mathbf{S}_{\rm u}$
is given by Equation~\eqref{eq_def_S} with \mbox{$\mathbf{R} = \sigma_{\rm o}^2 \mathbf{I}_m$};
{\it i.e.}, with observation-error correlations neglected. An analytical expression
for the eigenvalues of $\mathbf{S}_{\rm u}$ can be derived directly from Equation~\eqref{eq_eig_S} of Theorem~\ref{th_eig_S}
by setting \mbox{$M_{\rm o}=0$} (no diffusion) and \mbox{$\nu_{\rm o}\widetilde{L}_{\rm o} = \gamma^2/h_{\rm o} = 1$}
(exact normalisation):
\begin{equation}
\forall i \in \llbracket 0, n-1 \rrbracket, \quad \lambda_i\big( \mathbf{S}_{\rm u} \big) = \begin{cases}
1 + \alpha_{\rm u} \displaystyle \sum_{r=0}^{\zeta-1}\limits\left[ 1+4\widetilde{L}_{\rm b}^2
\sin^2 \! \left(\pi\frac{\displaystyle i+rm}{\displaystyle \zeta m}\right)\right]^{-M_{\rm b}} &\text{ if } i \in \llbracket0,m-1\rrbracket,\\
1 &\text{ otherwise, }\end{cases}
\end{equation}
where
\begin{equation}
\alpha_{\rm u} \, = \, \frac{ \sigma_{\rm b}^2 \nu_{\rm b} L_{\rm b}}{\sigma_{\rm o}^2 h_{\rm b}}.
\nonumber
\end{equation}
As we are considering the case where there are fewer observations than grid points (\mbox{$n>m$}), the minimum eigenvalue
of $\mathbf{S}_{\rm u}$ is one. The maximum eigenvalue is $\lambda_0(\mathbf{S}_{\rm u})$ as can be seen by noting
that the term in square brackets is larger (and hence its inverse is smaller) for all \mbox{$i>0$}.
Consequently, \emph{if \mbox{$n>m$}}, the condition number of $\mathbf{S}_{\rm u}$ is
\begin{equation}
\kappa(\mathbf{S}_{\rm u}) = 1 + \alpha_{\rm u}  \sum_{r=0}^{\zeta-1}\limits\left[ 1+4\widetilde{L}_{\rm b}^2
\sin^2 \! \left(\pi\frac{r}{\zeta}\right)\right]^{-M_{\rm b}}.
\label{eq_kappa_Su}
\end{equation}
Note that the sum in Equation~\eqref{eq_kappa_Su} is larger than one and approximately equal to one for parameter values
of interest; {\it i.e.}, for \mbox{$M_{\rm b} \geq 2$} and \mbox{$\widetilde{L}_{\rm b}\geq 1$}, its maximum is less than 1.04.
The condition number of $\mathbf{S}_{\rm u}$ is thus dominated by $\alpha_{\rm u}$.

We denote $\chi$ the ratio of condition numbers:
\begin{equation}
    \chi=\frac{\kappa(\mathbf{S})}{\kappa(\mathbf{S}_{\rm u})}.
    \label{eq_def_ksi}
\end{equation}
As $M_{\rm o}$ and $D_{\rm o}$ have no effect on $\kappa(\mathbf{S}_{\rm u})$,
variations of $\chi$ with respect to these parameters will reflect variations of $\kappa(\mathbf{S})$.
If \mbox{$\chi<1$} then accounting
for correlated observation error will improve
the conditioning of $\mathbf{S}$ and thus we can expect the convergence rate of CG to be improved.
Conversely, if \mbox{$\chi>1$} then accounting for
correlated observation error will degrade the conditioning of $\mathbf{S}$
and we can expect the convergence rate of CG to be degraded.

In the following, we will compute the condition numbers as a function of the
Daley length-scales defined in Equation~\eqref{eq:def_D_recall}.
Furthermore, since we are mainly interested in the sensitivity of the condition number to the correlation model
parameters, we will assume that the background- and observation-error variances are equal
(\mbox{$\sigma_{\rm b}^2 / \sigma_{\rm o}^2 = 1$}). We consider a domain of length $2000$~km, composed of \mbox{$n=500$} points
that are equally spaced every \mbox{$h_{\rm b}=4$}~km. We assume that a direct observation is available every other
grid point (\mbox{$\zeta =2$}, \mbox{$m=250$}, \mbox{$h_{\rm o} = 8$}~km).

\begin{figure}[hbt]
\centering
\includegraphics[width=\textwidth]{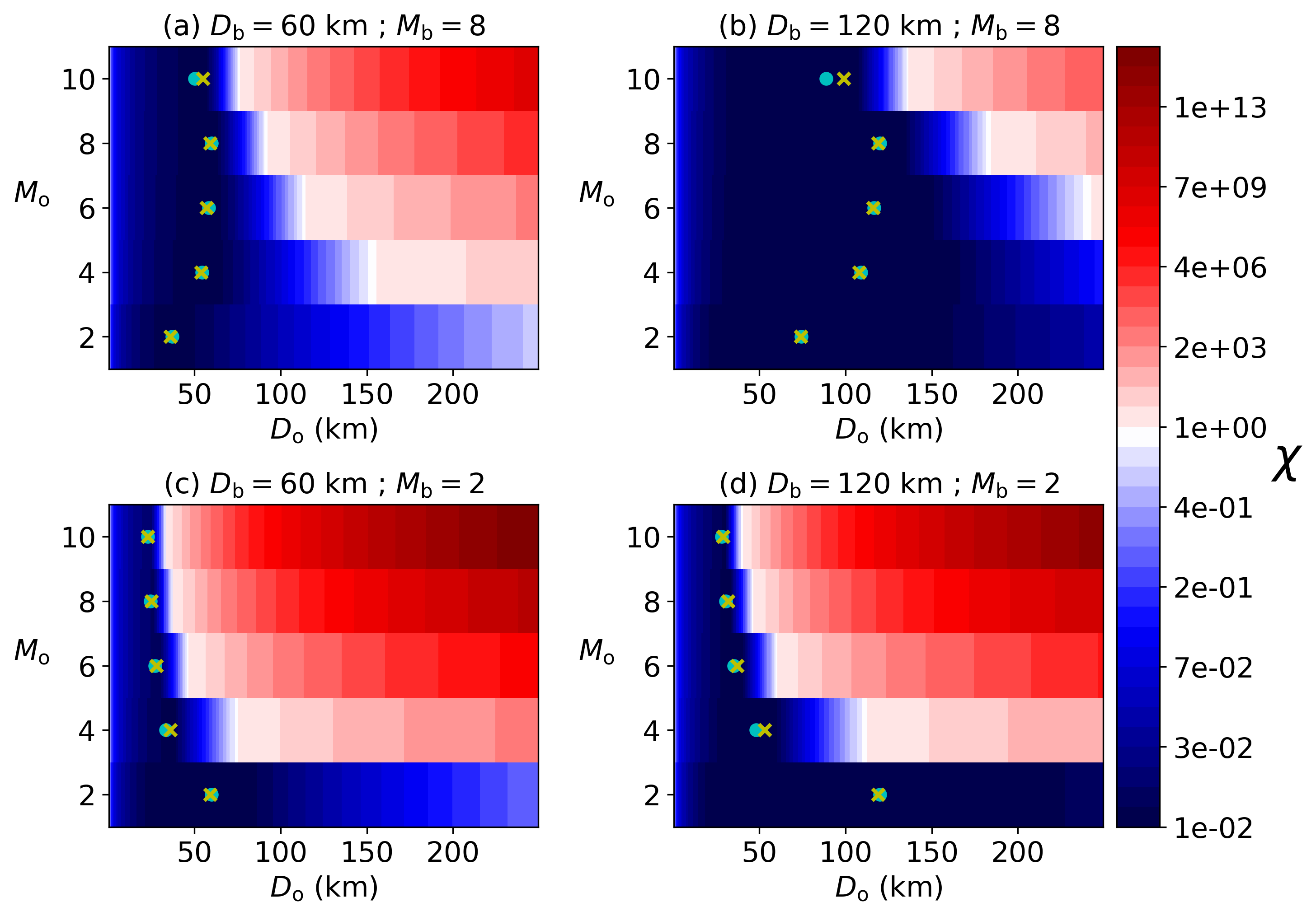}
\caption{The ratio $\chi$ (Equation~\eqref{eq_def_ksi}) is plotted for a fixed parameter pair ($M_{\rm b}, D_{\rm b}$) per panel
(indicated in the title),
and values of $M_{\rm o}$ and $D_{\rm o}$ that vary along the axes. The cyan circles mark the minima predicted
by Corollary~\ref{co_minima_Mb_smaller}  and Corollary~\ref{co_minima_Mo_smaller}.
The yellow crosses mark the true minima. Note that the colour palette uses
a logarithmic scale with a different range below and above \mbox{$\chi = 1$}.}
\label{fig_chi_maps}
\end{figure}

 Figure~\ref{fig_chi_maps} shows $\chi$ as a function of $D_{\rm o}$ (abscissa)
 and $M_{\rm o}$ (ordinate) for different parameter pairs ($M_{\rm b}, D_{\rm b}$) indicated in the title of each panel.
 The zones in blue (respectively, red) correspond to parameter pairs ($M_{\rm o}, D_{\rm o}$) that improve (respectively, degrade)
 the condition number.
 When \mbox{$M_{\rm o} \leq M_{\rm b}$} and \mbox{$D_{\rm o} \leq D_{\rm b}$}, the conditioning is systematically improved.
An improvement is also possible when \mbox{$D_{\rm o} \geq D_{\rm b}$} if $M_{\rm o}$ is small enough.
However, when $M_{\rm o}$ becomes too large compared to $M_{\rm b}$ or when $D_{\rm o}$ becomes too large compared to $D_{\rm b}$,
the conditioning
is degraded and can become significantly degraded even for modest changes in the parameter values. For example,
when \mbox{$M_{\rm b} = 8$} and \mbox{$D_{\rm b} = 60$}~km (Figure~\ref{fig_chi_maps}a), and \mbox{$M_{\rm o} = 10$},
$\chi$ (and thus $\kappa(\mathbf{S})$) increases by several orders of magnitude when the value of $D_{\rm o}$
is increased to less than double $D_{\rm b} $. When $D_{\rm o}$ is approximately four times $D_{\rm b}$,
$\chi$ reaches $10^{10}$ (top right corner of Figure~\ref{fig_chi_maps}a).
In these cases, we can expect the convergence of CG to be significantly affected, as
illustrated later in Section~\ref{sec_numerical_experiments}.

 As predicted by Corollary~\ref{co_minima_Mb_smaller}  and Corollary~\ref{co_minima_Mo_smaller},
 if $D_{\rm b}$, $M_{\rm b}$ and $M_{\rm o}$ are fixed, then $\kappa(\mathbf{S})$ admits a unique minimum.
 When \mbox{$M_{\rm o}>M_{\rm b}$}, the minima predicted by Corollary \ref{co_minima_Mb_smaller}
 are visibly shifted towards lower values of $D_{\rm o}$  (cf. circles and crosses in Figure~\ref{fig_chi_maps}).
 This shift corresponds to an increase of the condition number of up to $5\%$. As the variations of the condition number studied here
 cover a range of more than 10 orders of magnitude, this increase is acceptable. When \mbox{$M_{\rm o}<M_{\rm b}$},
 there is no significant difference in the position of the minima
 predicted by Corollary~\ref{co_minima_Mo_smaller}  and the exact minima; there is an increase of the condition number
 between the predicted minima and
 exact minima that is smaller than $0.1\%$.
 The pattern is similar with each fixed settings
 for ($M_{\rm b}, D_{\rm b}$) (\textit{i.e.}, each panel of Figure~\ref{fig_chi_maps}).
 If $D_{\rm b}$ increases (decreases) then the `optimal'
 values of $D_{\rm o}$ are shifted to the right (left) towards larger (smaller) values of $D_{\rm o}$
 (cf. Figure~\ref{fig_chi_maps}a and b, or Figure~\ref{fig_chi_maps}c and d).

\section{Numerical experiments}
\label{sec_numerical_experiments}
In this section, we illustrate how different covariance parameter settings influence the performance of
the CG minimisation. We evaluate the convergence rate in relation to the condition number diagnostic
$\chi$ presented in Section~\ref{sec_exp_condition} (see Figure~\ref{fig_chi_maps}) and the results of
Corollary~\ref{co_minima_Mb_smaller} and Corollary~\ref{co_minima_Mo_smaller}

\subsection{Experimental framework}
\label{sec_exp_framework}

As in Section~\ref{sec_exp_condition}, we define our baseline 1D-Var experiment as one in which the
background- and observation-error variances are taken to be equal, with their actual
values set to one unit (\mbox{$\sigma_{\rm b}^2 = \sigma_{\rm o}^2 = 1$}).
The domain is periodic with length $2000$~km and there are \mbox{$n=500$} grid points
\mbox{($h_{\rm b} = 4$~km)}.
We define $\mathbf{H}$ as a selection operator where
direct observations are assumed to be available at every other grid point ($m=250$, $h_{\rm o} = 8$~km).

We consider different `scenarios' where observations with perfectly known error correlations
are assimilated together with a background state that also has perfectly known error correlations.
We start by defining a `true state', $\mathbf{x}_{\rm t}$, which is specified by an analytical function.
As $\mathbf{H}$ is linear in our framework, the actual choice of the true state has no impact on the
performance of the CG minimisation as it is subtracts out from the innovation vector.
The background state and observations are then generated by adding to the true state,
unbiased random perturbations of covariance matrices $\mathbf{B}$ and $\mathbf{R}$, respectively. Specifically,
let $\widehat{\mathbf{\epsilon}}_{\rm b}$ and $\widehat{\mathbf{\epsilon}}_{\rm o}$
be normally-distributed vectors with zero mean and covariance matrix
equal to the identity matrix.
We can generate many realisations of
\mbox{$\widehat{\mathbf{\epsilon}}_{\rm b} \sim N(\mathbf{0}, \mathbf{I}_n)$}
and \mbox{$\widehat{\mathbf{\epsilon}}_{\rm o} \sim N(\mathbf{0}, \mathbf{I}_m)$} using a random number generator.
Then, using the factored covariance matrices
\mbox{$\mathbf{B} =\mathbf{U}\mathbf{U}^\transpose$} and \mbox{$\mathbf{R} =\mathbf{V}\mathbf{V}^\transpose$}, we define
\begin{align}
\mathbf{x}_{\rm b} &=  \mathbf{x}_{\rm t} + \mathbf{\epsilon}_{\rm b},
\\
\mathbf{y}_{\rm o} & =  \mathbf{H}\mathbf{x}_{\rm t} + \mathbf{\epsilon}_{\rm o},
\label{eq_ranerrors}
 \end{align}
where \mbox{$\mathbf{\epsilon}_{\rm b} = \mathbf{U} \widehat{\mathbf{\epsilon}}_{\rm b}$}
and \mbox{$\mathbf{\epsilon}_{\rm o} = \mathbf{V} \widehat{\mathbf{\epsilon}}_{\rm o}$}. By construction,
$\mathbb{E}[\mathbf{\epsilon}_{\rm b} \mathbf{\epsilon}_{\rm b}^\transpose] = \mathbf{B}$
and $\mathbb{E}[\mathbf{\epsilon}_{\rm o}\mathbf{\epsilon}_{\rm o}^\transpose] = \mathbf{R}$
where $\mathbb{E}[\; ]$ is the expectation operator.

To assess the convergence rate of the CG algorithm at each iteration, it is common
to monitor the reduction of the cost function or, equivalently, the reduction of
the $\mathbf{A}$-norm of the analysis (solution) error.
However, if we want to compare the convergence rate of CG with different $\mathbf{R}$, the $\mathbf{A}$-norm
is not appropriate since it depends on $\mathbf{R}$ and thus
does not represent the same quantity in all cases.
Since we are working with an idealized system for which the true state $\mathbf{x}_{\rm t}$ is known,
we have access to alternative metrics that would not be available in a realistic system.

At the $\ell$-th iteration of the CG algorithm,
an increment $\delta \mathbf{x}_{\ell}$ is produced. We can deduce from this
increment the analysis error that would result if
the CG algorithm was stopped at the $\ell$-th iteration:
\begin{equation}
\mathbf{\epsilon}_{\rm a}^{(\ell)} \, = \, \mathbf{x}_{\rm b} + \delta \mathbf{x}_{\ell} - \mathbf{x}_{\rm t}.
\nonumber
\end{equation}
In each experiment, there is a random component in the generation of the background and observations, which will
affect $\mathbf{\epsilon}_{\rm a}^{(\ell)} $. By performing multiple experiments with different right-hand sides
($\mathbf{b}$ in Equation~\eqref{eq_preconditioned_system}),
we can obtain multiple realizations of $\mathbf{\epsilon}_{\rm a}^{(\ell)}$ from which analysis-error statistics can
be deduced. In particular, we can estimate at each iteration the total analysis-error variance or,
equivalently, the trace of the analysis-error covariance matrix. This is the quantity that is minimised
explicitly in a statistical analysis based on the Best Linear Unbiased Estimator (BLUE).
It is well known that, when the constraints are linear and when the
background and observation errors are normally distributed, the
minimising solution of the cost function of variational data assimilation is equivalent to the BLUE
when both are formulated under the same assumptions \citep{Gelb_1974}.

As a diagnostic, we compute the square root of the average of the total analysis-error variance:
\begin{equation}
\sigma_{\rm a}^{(\ell)} \,  = \, \sqrt{\frac{1}{n}\mathbb{E}\left[
{\rm Tr}\left( (\mathbf{\epsilon}_{\rm a}^{(\ell)})(\mathbf{\epsilon}_{\rm a}^{(\ell)})^{\rm T}\right)\right]}
\, = \, \sqrt{\frac{1}{n}\mathbb{E}
\left[ (\mathbf{\epsilon}_{\rm a}^{(\ell)})^{\rm T}(\mathbf{\epsilon}_{\rm a}^{(\ell)})\right]}
\label{eq_sigl}
\end{equation}
where $\mathbb{E}$ denotes the expectation operator and ${\rm Tr}$ the trace operator. We approximate the
expectation operator as an average of 1000 realizations with random right-hand sides.

 This metric can be used to assess not only the convergence rate of the minimisation on which we focused in the previous
 sections, but also the accuracy of the solution at each iteration. We expect the solution of the
 minimisation at full convergence to be more accurate when the actual observation-error correlations are accounted for.
 If the condition number is reduced by a non-diagonal $\mathbf{R}$ (\textit{i.e.}, \mbox{$\chi<1$} as in the `blue zone' of
 Figure~\ref{fig_chi_maps}) then the minimisation should converge faster. In this situation,
 we can expect the solution to be more accurate no matter when the minimisation is stopped. On the other hand,
 if the condition number is increased by a non-diagonal $\mathbf{R}$ (\textit{i.e.}, \mbox{$\chi>1$} as in the `red zone' of
 Figure~\ref{fig_chi_maps}) then we can expect the convergence rate to be slower. In this situation, it is not clear
 whether a non-diagonal $\mathbf{R}$ is beneficial to the analysis or not,
 as there is a trade-off between the convergence rate and the expected accuracy at full convergence.
Monitoring the analysis error at each iteration allows us to visualize
 this trade-off as it indicates, at each iteration, how accurate the analysis would be (on average) if the convergence
 was stopped at this point.

  A natural choice of normalization for  $\sigma_{\rm a}^{(\ell)}$ is its initial value $\sigma_{\rm a}^{(0)}$,
  which is equal to $\sigma_{\rm b}$ in the experiments as the initial increment \mbox{$\delta \mathbf{x}_{0}$}
 is zero. The quantity \mbox{$\sigma_{\rm a}^{(\ell)}/\sigma_{\rm a}^{(0)}$} thus indicates the relative
 error reduction on each iteration of CG. We denote  $\sigma_{\rm a}^{\ast}$ the value of
 $\sigma_{\rm a}^{(\ell)}$ at full convergence of CG.
 If the specifications of $\mathbf{B}$ and $\mathbf{R}$ used to compute the analysis match the
 actual error statistics, this quantity  should become equal to its theoretical minimum,
 $\sigma_{\rm a}^{\rm opt}$, which can be computed directly from the trace of the theoretical analysis-error covariance matrix:
 \begin{equation}
  \sigma_{\rm a}^{\rm opt} = \sqrt{ \frac{1}{n}{\rm Tr}\left[\left(\mathbf{B}^{-1}
  + \mathbf{H}^{\rm T}\mathbf{R}^{-1}\mathbf{H} \right)^{-1} \right]}
  \label{eq_sigopt}
 \end{equation}
where $\mathbf{B}$ and $\mathbf{R}$ are the same as those used to generate the random errors in Equation~\eqref{eq_ranerrors}.

In the experiments, $\mathbf{R}$ denotes the `true' observation-error covariance matrix
used to generate the spatially-correlated
random errors that are added to the observations.
The matrix \mbox{$\widetilde{\mathbf{R}}_1 = \sigma_{\rm o}^2 \mathbf{I}_m$} is a diagonal
approximation where $\sigma_{\rm o}^2$ is the same constant variance used in the `true' $\mathbf{R}$.
This corresponds to the common case where spatially-correlated observation errors are ignored
in the weighting matrix in the cost function, which is inconsistent with the
statistical properties of the observations that are assimilated.
The third scenario also uses a diagonal matrix,
\mbox{$\widetilde{\mathbf{R}}_2 = \upsilon \, \sigma_{\rm o}^2 \mathbf{I}_m$},
but the variances are multiplied by an inflation factor ($\upsilon$) to mitigate the effect
of neglecting the error correlations.
This procedure is common in real-data assimilation
problems, to avoid overfitting observations at large spatial scales while
retaining a simple covariance matrix.
In practice, the inflation factor is usually estimated empirically.
In our experiments, we can determine the best-possible inflation factor by minimizing $\sigma_{\rm a}^{\ast}$
with respect to $\upsilon$. As $\sigma_{\rm a}^{\ast}$ behaves approximately as a convex function of $\upsilon$, this
can be achieved by computing $\sigma_{\rm a}^{\ast}$ for increasing values of $\upsilon$ until it stops decreasing
(\textit{i.e.,} until the observations are no longer overfit). This method cannot be applied in an operational
context as it requires access to the true state. Even with a performance metric that uses a proxy
for the true state, the cost of the procedure would be prohibitive as thousands of realisations of
$\sigma_{\rm a}^{\ast}$ are required. The experiments using $\widetilde{\mathbf{R}}_2$ thus
represent the best inflation can offer rather than what could be achieved in practice.

\subsection{Results}
\label{sec_exp_results}

In the first set of experiments, we consider the case where the background- and observation-error
correlation parameters are in the regime \mbox{$M_{\rm o} < M_{\rm b}$} and
\mbox{$D_{\rm o} < D_{\rm b}$}.
\begin{figure}[htb]
\centering
\captionsetup[subfigure]{justification=centering}
\begin{subfigure}[t]{0.38\textwidth}
\includegraphics[width=\textwidth]{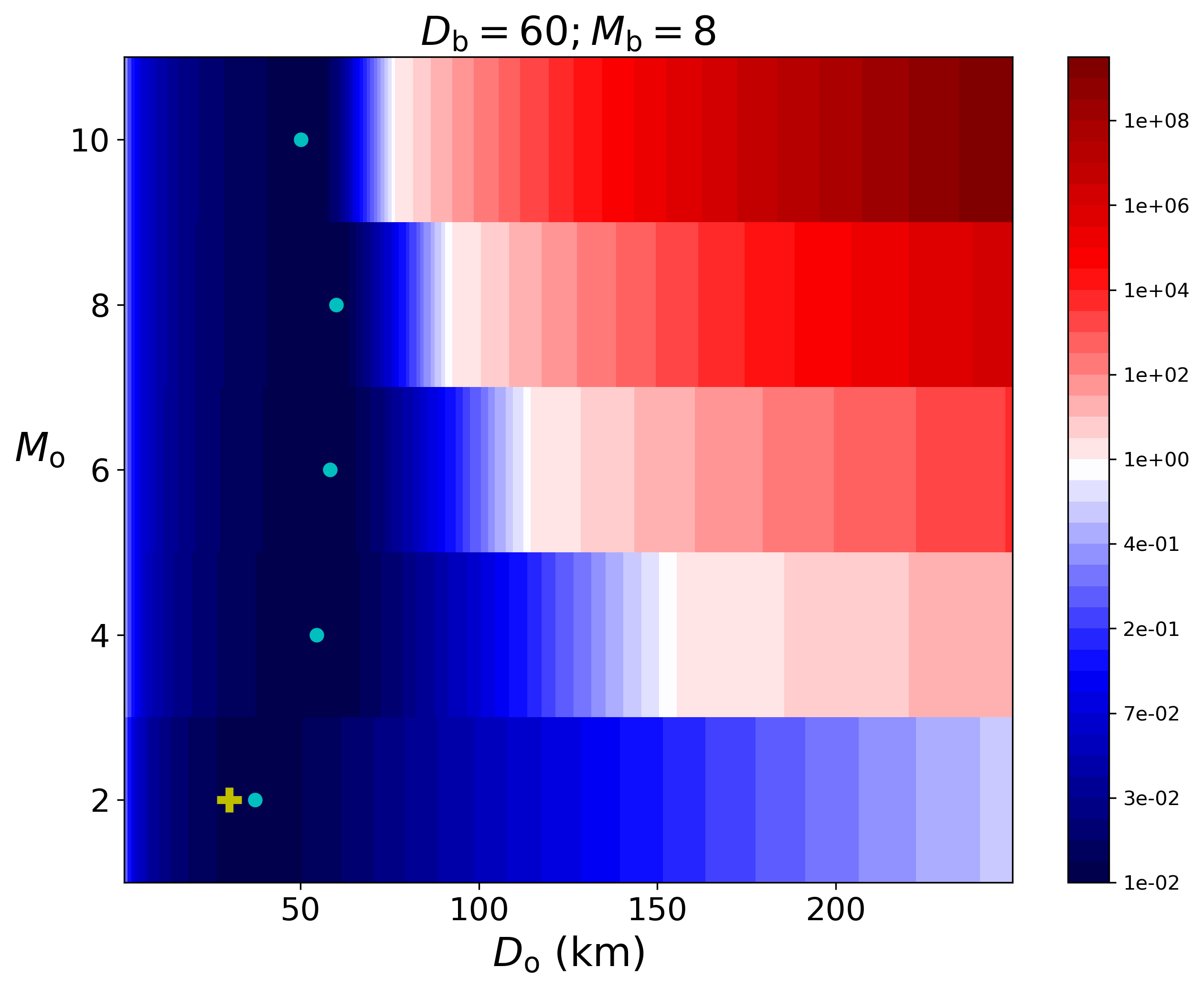}
\caption{$\chi$ for different covariance parameters}
\label{fig_chi_scenario_1}
\end{subfigure}\hfill
\begin{subfigure}[t]{0.60\textwidth}
\includegraphics[width=\textwidth]{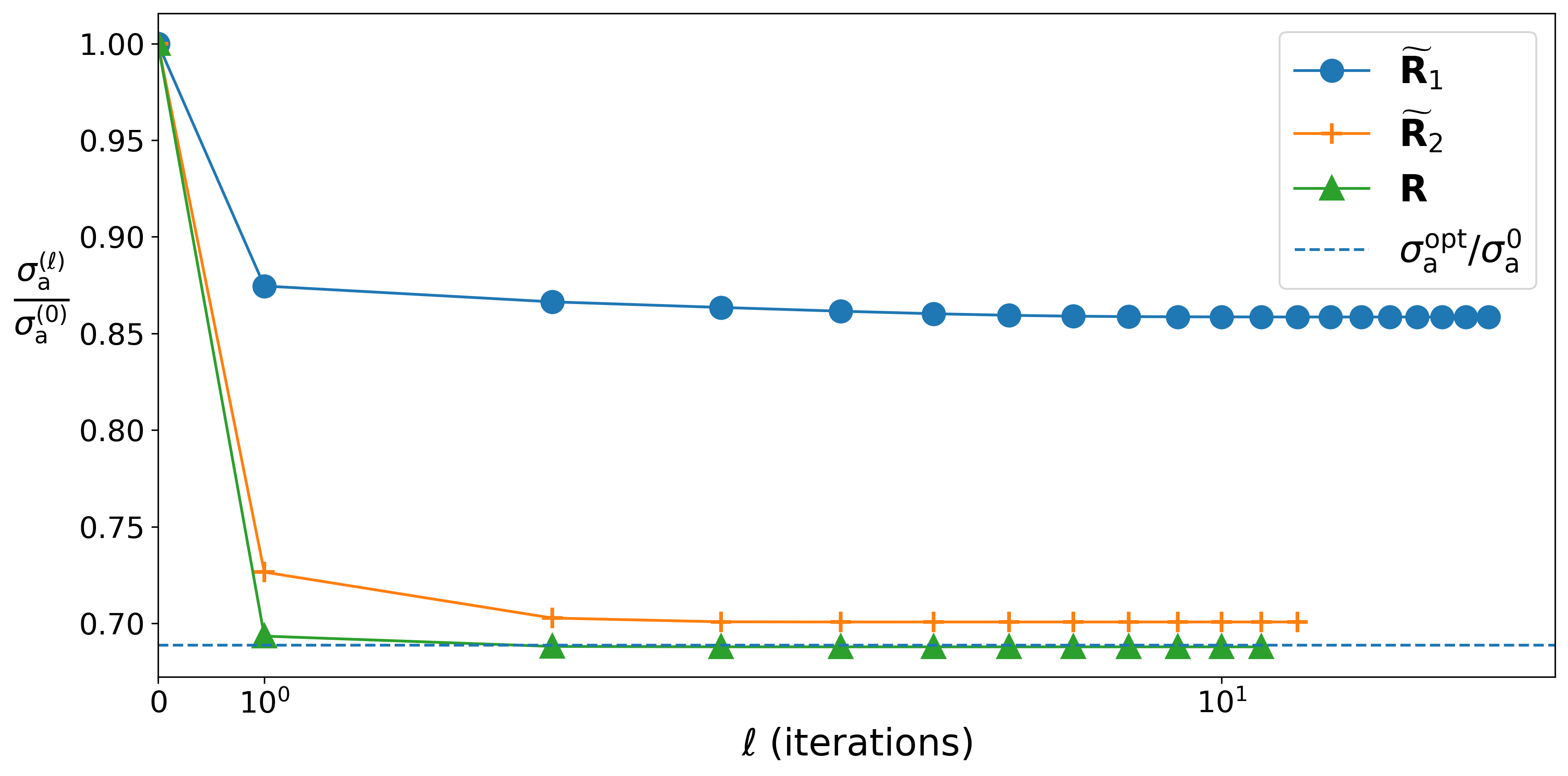}
\caption{CG minimisation}
\label{fig_scenario_1}
\end{subfigure}
\caption{(a) Same as Figure~\ref{fig_chi_maps} but with a plus sign added to indicate
the parameter pair ($M_{\rm o}, D_{\rm o}$) used for the 1D-Var experiments in panel~(b).
(b) $\sigma_{\rm a}^{(\ell)}/\sigma_{\rm a}^{(0)}$ (Equation~\eqref{eq_sigl}) as a function of CG iteration
count $\ell$ for three 1D-Var experiments with the same covariance parameters for $\mathbf{B}$
(\mbox{$\sigma_{\rm b}^2 = 1$, $M_{\rm b}=8$}, \mbox{$D_{\rm b}=60$}~km) but different
covariance parameters for $\mathbf{R}$: (1) $\mathbf{R}$ with the `true' correlation parameters
(\mbox{$\sigma_{\rm o}^2 = 1$, $M_{\rm o}=2$}, \mbox{$D_{\rm o}=30$}~km); (2)
a diagonal approximation,
\mbox{$\widetilde{\mathbf{R}}_1 = \sigma_{\rm o}^2 \mathbf{I}_m$} with \mbox{$\sigma_{\rm o}^2 = 1$};
(3) a diagonal approximation with inflated variances,
\mbox{$\widetilde{\mathbf{R}}_2 =  \upsilon \, \sigma_{\rm o}^2 \mathbf{I}_m$}
where \mbox{$\upsilon =10.5$} is an optimally-estimated inflation factor.
The theoretical minimum analysis-error ratio
$\sigma_{\rm a}^{\rm opt}/\sigma_{\rm a}^{(0)}$ (Equation~\eqref{eq_sigopt})
is marked by a horizontal dashed line.}
\label{fig_s1}
\end{figure}
The observation-error correlation parameters are set to \mbox{$M_{\rm o}=2$} and $D_{\rm o}=30$~km,
which corresponds to a SOAR function as used in \cite{Tabeart_2021}.
These values are roughly similar to those proposed by \cite{Guillet_2019}, where
the parameter settings were determined to provide a suitable fit of a diffusion-model to error
correlation estimates of certain satellite radiance observations \citep{Waller_2016}.
The background-error correlation parameters are set to \mbox{$M_{\rm b}=8$} and \mbox{$D_{\rm b}=60$}~km,
which makes the correlation function more Gaussian-like than that of $\mathbf{R}$.
The correlation length-scale of $\mathbf{B}$ is double the correlation length-scale of $\mathbf{R}$.
These are the same $\mathbf{B}$ parameters that were used in Figure~\ref{fig_chi_maps}.
With these parameters, we know that $\chi<1$ (Figure~\ref{fig_chi_scenario_1}),
which means that the condition number is reduced when observation-error correlations
are accounted for.

The average error-convergence curves from the 1D-Var experiments with $\mathbf{R}$, $\widetilde{\mathbf{R}}_1$
and $\widetilde{\mathbf{R}}_2$ are shown in Figure~\ref{fig_scenario_1}.
For each experiment, minimisations are performed in parallel for all 1000 realisations of
the random right-hand side and are stopped when the 2-norm of the residual normalized by
its initial value reaches $10^{-6}$. Convergence is achieved rather quickly,
taking about 20 iterations with $\widetilde{\mathbf{R}}_1$ and about 10 iterations with $\mathbf{R}$ and
$\widetilde{\mathbf{R}}_2$.

For the experiment with $\widetilde{\mathbf{R}}_1$, the analysis-error standard deviation is only reduced
by about $15\%$ at full convergence, compared to the theoretical limit of $32\%$.
In this set-up, the optimal variance inflation factor is approximately equal to $10.5$. Inflating the error variances significantly
improves the error reduction ($30\%$).
However, the theoretical minimum error cannot be reached, even with an inflation factor
that has been optimized for this specific experiment.
It is also important to remark that the experiment with inflated variances converges faster
than the experiment with the original variances. This is consistent with
Equation~\eqref{eq_kappa_Su}, which shows that, for large $\alpha_{\rm u}$,
the condition number of $\mathbf{S}_{\rm u}$ is approximately
inversely proportional to the observation-error
variance and is thus divided by 10.5 in this case. Best results are obtained with $\mathbf{R}$.
First, the convergence rate is the fastest of the three experiments. Second,
on each iteration, the solution is more accurate than the solutions from either
$\widetilde{\mathbf{R}}_1$ or $\widetilde{\mathbf{R}}_2$. At full convergence, the solution
attains the theoretical minimum error.

\begin{figure}[ht]
\centering
\captionsetup[subfigure]{justification=centering}
\begin{subfigure}[t]{0.38\textwidth}
\includegraphics[width=\textwidth]{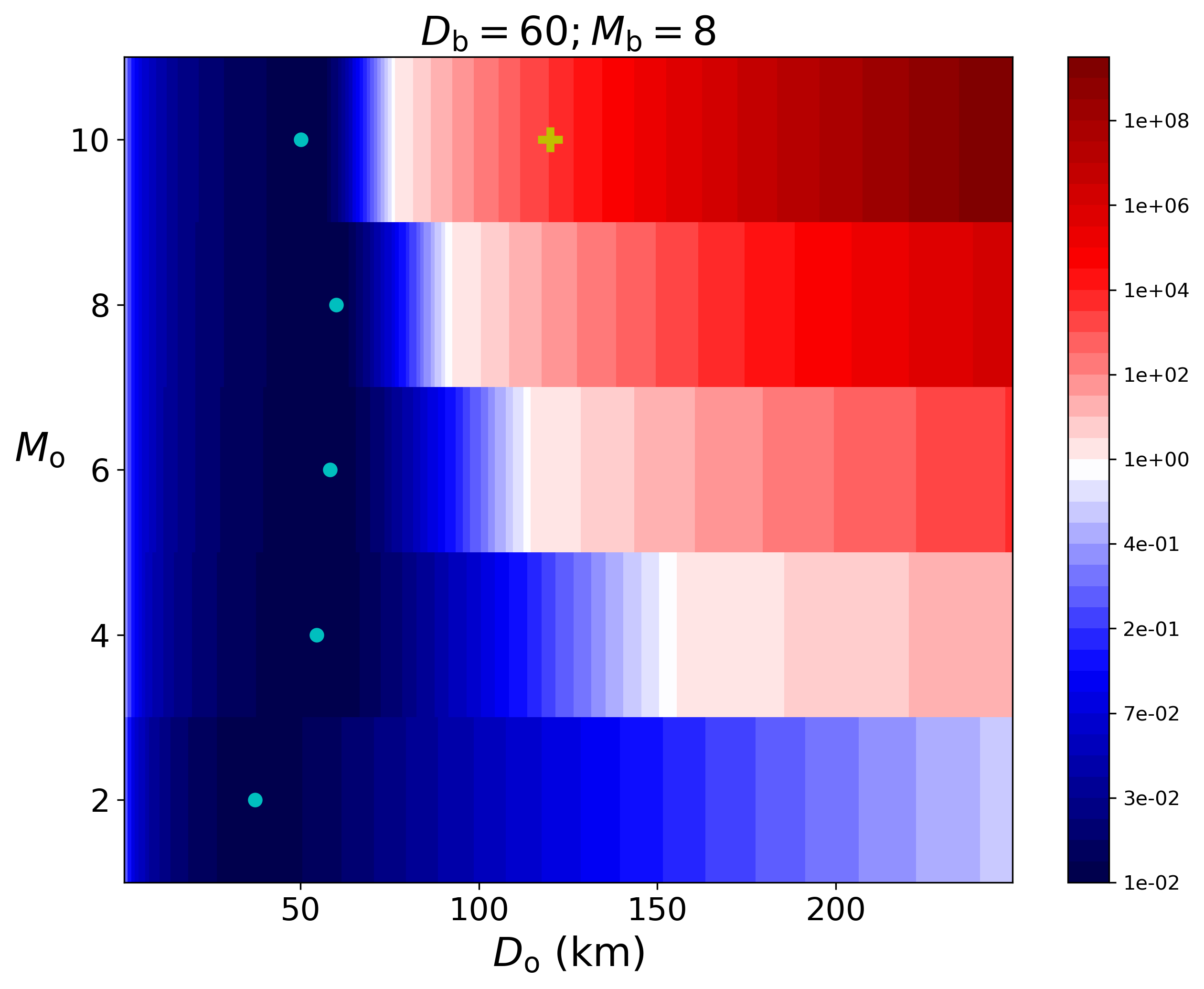}
\caption{$\chi$ for different covariance parameters}
\label{fig_chi_scenario_2}
\end{subfigure}\hfill
\begin{subfigure}[t]{0.60\textwidth}
\includegraphics[width=\textwidth]{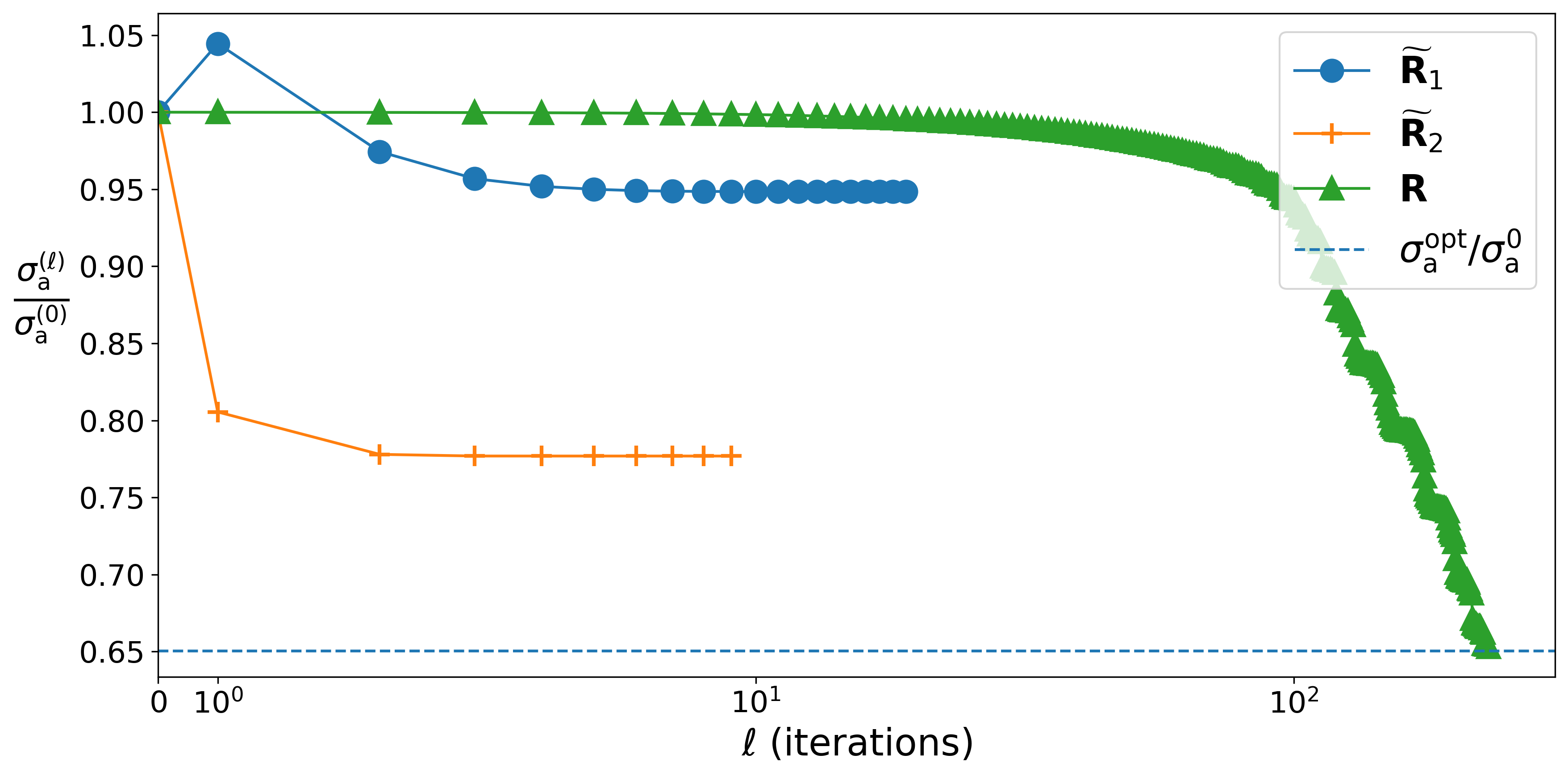}
\caption{CG minimisation}
\label{fig_scenario_2}
\end{subfigure}
\caption{Same as Figure~\ref{fig_s1} but with a different parameter pair
($M_{\rm o} = 10, D_{\rm o} = 120$~km), indicated by the plus sign in panel~(a).
The optimal inflation factor in panel~(b) is \mbox{$\upsilon = 17$}.}
 \label{fig_s2}
 \end{figure}
We now consider a case where the parameter values of the observation-error correlations suggest
that the convergence rate of the minimisation will be degraded (\textit{i.e.,} $\chi>1$) .
Using Figure~\ref{fig_chi_maps}, we can select parameter values
that will increase the condition number. In particular,
we set \mbox{$M_{\rm o}=10$} and \mbox{$D_{\rm o}=120$~km}, while
keeping the background-error parameter values unchanged. In this set-up, the observation-error length-scale $D_{\rm o}$
is double the background-error correlation length-scale.
With this set of parameter values, the condition number is increased by a factor of $10^4$. In this set-up,  the theoretical
minimum error $\sigma_{a}^{\rm opt}/\sigma_{\rm b}$ is lower than in the previous experiment: $65\%$ instead of $68\%$.
This decrease means that observations with highly correlated errors `complement' the background better than those of
the previous experiment.

As shown in Figure~\ref{fig_scenario_2}, while the minimisation with $\mathbf{R}$ does reach the theoretical minimum,
it requires about 200 iterations to converge.
If the minimisation was terminated in its early iterations ($<50$)
then the analysis would be hardly better than that of the background and not as accurate as the solutions
from either the $\widetilde{\mathbf{R}}_1$ or $\widetilde{\mathbf{R}}_2$ experiments.
In this case, it would be clearly detrimental to account for the observation-error correlations instead
of ignoring them.

It is interesting to note that the convergence curve for $\widetilde{\mathbf{R}}_1$
in Figure~\ref{fig_scenario_2} differs
from the one in Figure~\ref{fig_scenario_1} even though the Hessian matrix $\mathbf{S}$ for this case
is the same in both experiments. On the other hand, the assimilated observations are different in each
experiment as they have different correlated errors. The difference in the convergence curves
in the two experiments thus highlights the role of the right-hand side (which depends on the observations)
of the system on the convergence rate of CG. While this is an important issue, we have not attempted
to address it in this article.

The experiment $\widetilde{\mathbf{R}}_1$  results in an error reduction of only $5\%$, compared to the theoretical
minimum of $35\%$. Moreover, the error reduction is non-monotonic, which is symptomatic of a more concerning issue:
as $\widetilde{\mathbf{R}}_1$ is an approximation of the actual error covariances,
there is no guarantee that the analysis will be more accurate than the background
(even at full convergence). Repeating the experiment with a larger $\sigma_{\rm o}$ (in both
$\mathbf{R}$ and $\widetilde{\mathbf{R}}_1$)
than $\sigma_{\rm b}$ actually results in a monotonically increasing error (not shown). In this case, the analysis overfits
the observations due to the neglected correlations in $\widetilde{\mathbf{R}}_1$. This problem is
exacerbated when the observations are less accurate than the background.

In the current scenario, the optimal variance inflation factor is approximately equal to 17, and the experiment with
$\widetilde{\mathbf{R}}_2$ gives the best results when using a modest number of iterations ($<150$).
It produces a similar, rapid convergence rate as in the previous scenario (Figure~\ref{fig_scenario_1}) and
produces an accurate analysis, with a $23\%$ error reduction compared to the theoretical minimum error
reduction of $35\%$.

Rather than adopting a diagonal approximation, an alternative approach would
be to adjust the correlation parameters to accelerate the convergence rate while trying to retain
the salient features of the original correlation function, which in principle should correspond to our
best available estimate of the actual correlation function.
As discussed in Section~\ref{sec_structure_exact_cond}, certain adjustments to the parameter settings
can have a significant impact on the condition number, while inducing
relatively minor changes to the correlation function and hence to $\sigma_{\rm a}^{\ast}$.
Moreover, previous studies have shown that even an approximate correlation structure in $\mathbf{R}$
can yield higher quality analyses
than ones obtained with wrongly assuming
uncorrelated observation errors (\textit{e.g.}, \cite{Stewart_2013}).

\begin{figure}[ht]
\centering
\captionsetup[subfigure]{justification=centering}
\begin{subfigure}[t]{0.38\textwidth}
\includegraphics[width=\textwidth]{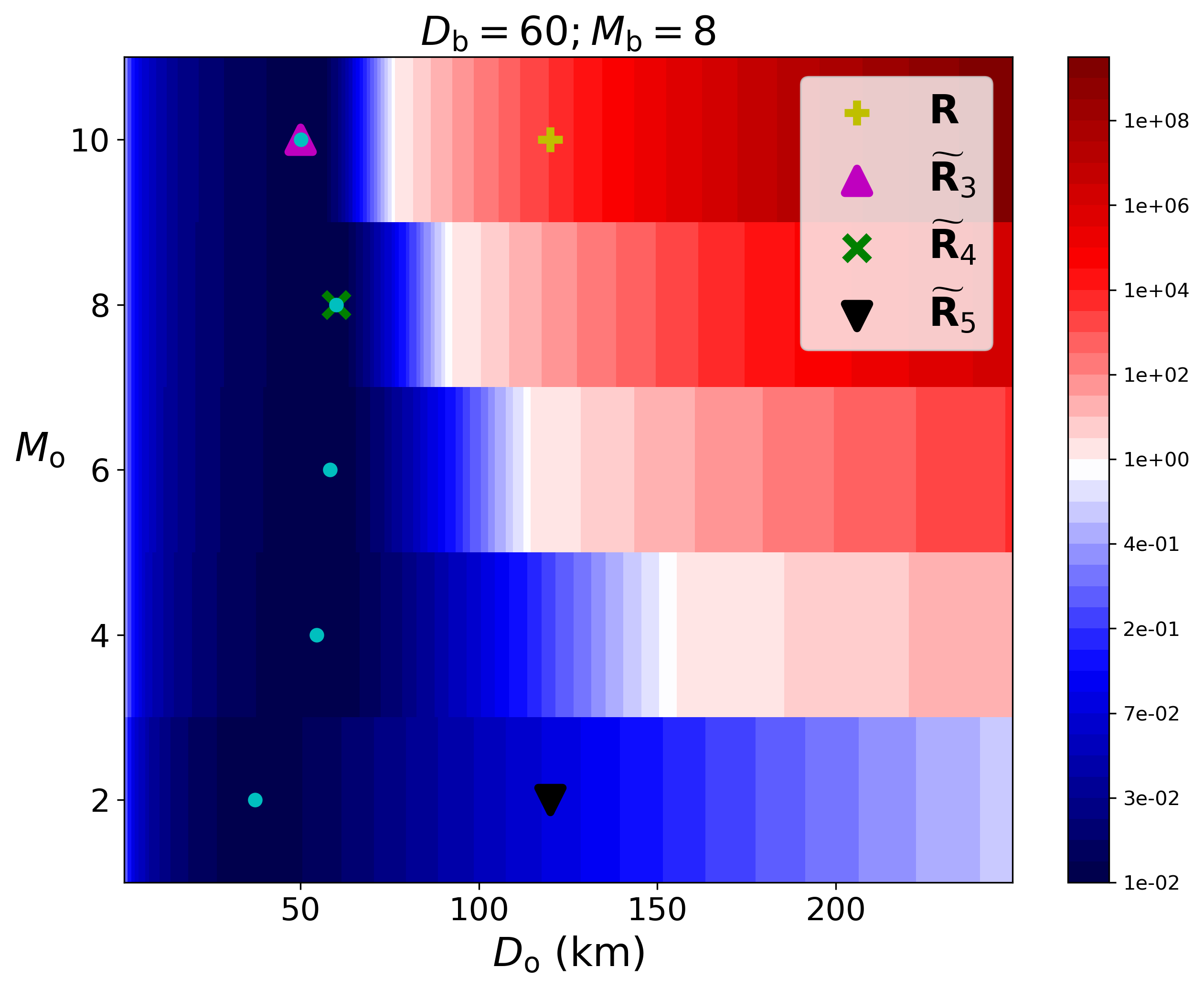}
\caption{$\chi$ for different covariance parameters}
\label{fig_chi_scenario_3}
\end{subfigure}\hfill
\begin{subfigure}[t]{0.60\textwidth}
\includegraphics[width=\textwidth]{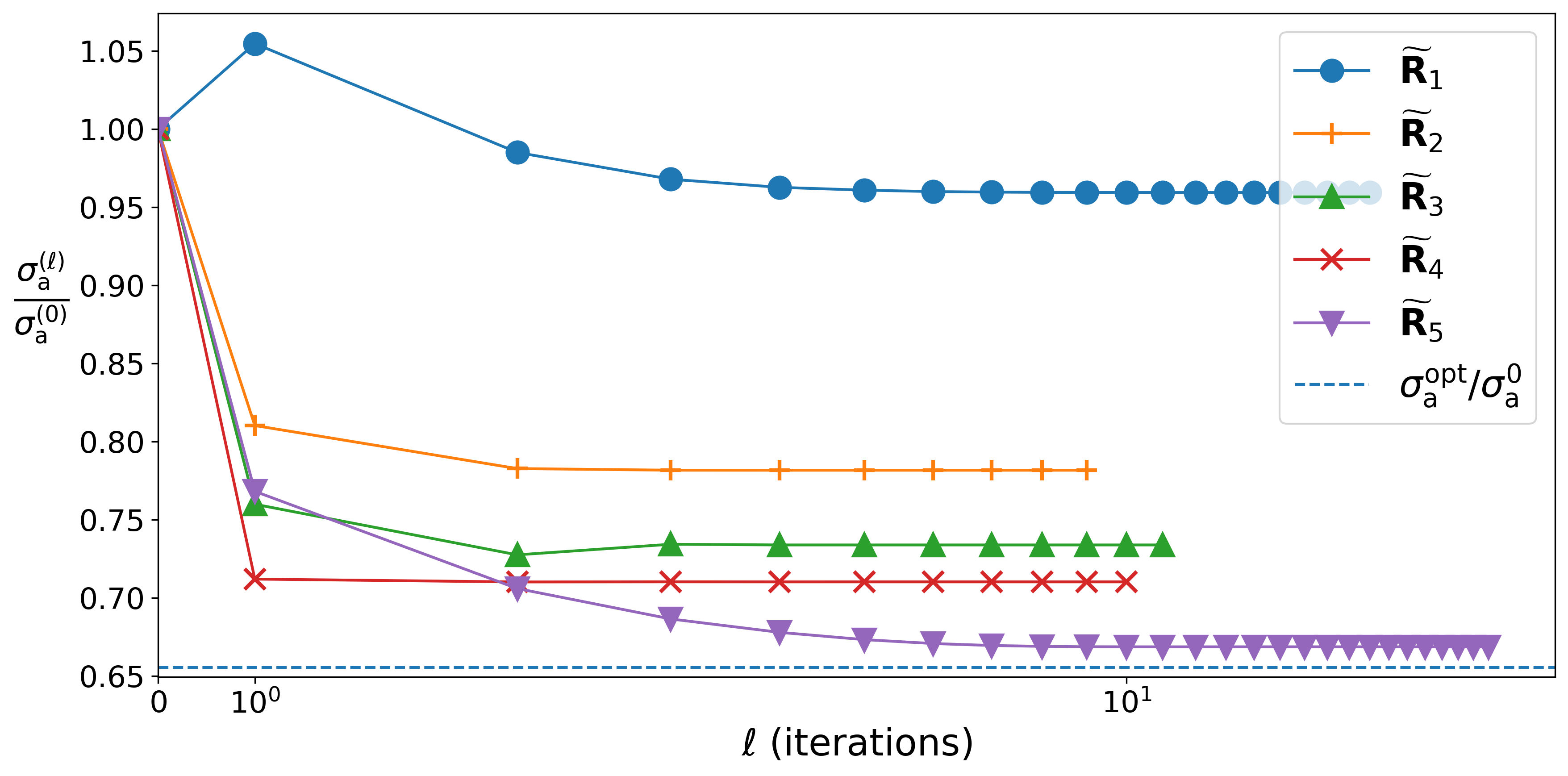}
\caption{CG minimisation}
\label{fig_scenario_3}
\end{subfigure}
\caption{Same as Figure~\ref{fig_s2} but with different parameter pairs
($M_{\rm o}=10$, $D_{\rm o} = 50$~km), ($M_{\rm o}=8$, $D_{\rm o} = 60$~km)
and ($M_{\rm o}=2$, $D_{\rm o} = 120$~km)
for $\mathbf{R}$ in panel~(b)
(experiments labelled $\widetilde{\mathbf{R}}_3$, $\widetilde{\mathbf{R}}_4$
and $\widetilde{\mathbf{R}}_5$, respectively)
compared to the parameter pair ($M_{\rm o}=10$, $D_{\rm o} = 120$~km)
used to generate
the observations. The different pairs are indicated by the different coloured symbols
in panel~(a).
The optimal inflation factor in panel~(b) is \mbox{$\upsilon = 17$ }.}
\label{fig_s3}
\end{figure}

Corollary~\ref{co_minima_Mb_smaller}, Corollary~\ref{co_minima_Mo_smaller}
and Figure~\ref{fig_chi_maps} can be used as a guideline
to find suitable parameters. In this scenario, we would like to pick values of $M_{\rm o}$ and $D_{\rm o}$ that are
`close enough' to our target parameter values of \mbox{$M_{\rm o}=10$} and \mbox{$D_{\rm o}=120$~km}
so as not to increase the analysis error by too much at full convergence ($\sigma_{\rm a}^{\ast}$),
but which produce a much smaller condition number for $\mathbf{S}$. In Figure~\ref{fig_s3}, we consider three additional
experiments
(labelled $\widetilde{\mathbf{R}}_3$, $\widetilde{\mathbf{R}}_4$, and $\widetilde{\mathbf{R}}_5$), which all use
correlation models that are approximate compared to the actual one used to generate the
observation error (\mbox{$M_{\rm o}=10$} and
\mbox{$D_{\rm o}=80$~km}) but which lead to improved convergence rates.
Figure~\ref{fig_s3} shows that
all three experiments outperform the diagonal $\mathbf{R}$ experiments
$\widetilde{\mathbf{R}}_1$ and $\widetilde{\mathbf{R}}_2$ at every iteration. We now discuss
the choice of the parameter values for these experiments in relation to
Corollary~\ref{co_minima_Mb_smaller}, Corollary~\ref{co_minima_Mo_smaller} and Figure~\ref{fig_chi_maps}.

In the situation where using the accurate correlation model would degrade the condition number,
for given values of $M_{\rm o}$, $M_{\rm b}$ and $D_{\rm b}$, we can modify $D_{\rm o}$ to approximate
the theoretical minimum condition number predicted by Corollary~\ref{co_minima_Mb_smaller} or Corollary~\ref{co_minima_Mo_smaller}.
The experiment with $\widetilde{\mathbf{R}}_3$ corresponds to the `extreme' case where $D_{\rm o}$ is modified
using Equation~\eqref{eq_minima_Mb_smaller_D} (\mbox{$M_{\rm o} > M_{\rm b}$} in this experiment)
so its value coincides exactly with the minimum.
To do so, we retain the true value of \mbox{$M_{\rm o}=10$} but use an approximate value of \mbox{$D_{\rm o} = 50$~km}
to compute the analysis, instead of 120~km that was used to generate the correlated observation errors.
Decreasing $D_{\rm o}$ to this value
reduces the condition number by a factor of $10^{6}$. With these new parameters, \mbox{$\chi=10^{-2}$},
and the condition number
obtained with $\widetilde{\mathbf{R}}_3$ is lower than the one obtained with either $\widetilde{\mathbf{R}}_1$ or
$\widetilde{\mathbf{R}}_2$ (the condition number with $\widetilde{\mathbf{R}}_2$ is only 17 times lower than the
condition number with $\widetilde{\mathbf{R}}_1$).
Figure~\ref{fig_chi_scenario_3} shows that the experiment with $\widetilde{\mathbf{R}}_3$
outperforms both diagonal approximations at every iteration.
With this modified value of $D_{\rm o}$, the error reduction is $27\%$ compared to
$35\%$ with the actual value of $D_{\rm o}$, but allows a much faster convergence.

Another possibility is to modify $M_{\rm o}$, so that a smaller modification on $D_{\rm o}$ is required to approximate
a minimum condition number predicted by Corollary~\ref{co_minima_Mb_smaller} or Corollary~\ref{co_minima_Mo_smaller}.
When \mbox{$M_{\rm b} = 8$} and \mbox{$D_{\rm b}=60$~km}, Equation~\eqref{eq_minima_Mb_smaller_D} associated with Corollary~\ref{co_minima_Mb_smaller}
predicts a minimum with the parameter pairs (\mbox{$M_{\rm o}=10, D_{\rm o} = 50$~km})
and (\mbox{$M_{\rm o}=8, D_{\rm o} = 60$~km}). It is thus possible to reach
a minimum condition number with a smaller decrease of $D_{\rm o}$ if $M_{\rm o}$ is reduced from
from 10 to 8. The experiment with $\widetilde{\mathbf{R}}_4$ uses \mbox{$M_{\rm o} = M_{\rm b}=8$} and
\mbox{$D_{\rm b}=D_{\rm o}= 60$~km} (although $\mathbf{B}$ and $\mathbf{R}$
are not equal as there are less observations than grid points). The condition number with $\widetilde{\mathbf{R}}_4$ is slightly
lower than with $\widetilde{\mathbf{R}}_3$ (and thus also lower than with $\widetilde{\mathbf{R}}_1$ and $\widetilde{\mathbf{R}}_2$).
This correlation model allows a slightly better error reduction than with $\widetilde{\mathbf{R}}_3$ ($30\%$ compared to $27\%$),
while also converging slightly faster.
 As for the experiment with $\widetilde{\mathbf{R}}_3$, intermediate values of $D_{\rm o}$ which approach
 the local minima while staying closer to the true parameters might offer a better compromise between
 a fast convergence and good error reduction at full convergence.

In the two previous experiments, the parameter values were chosen in order to reach a local condition number minimum predicted
by Corollary~\ref{co_minima_Mb_smaller} or Corollary~\ref{co_minima_Mo_smaller}. Another strategy, which does not rely
on these corollaries, is to use
Figure~\ref{fig_chi_maps} to select parameter pairs that result in lower condition numbers.
If the actual parameter values have a severe impact on convergence as in Figure~\ref{fig_s2} then Figure~\ref{fig_chi_maps}
suggests that reducing $M_{\rm o}$ and/or $D_{\rm o}$ can reduce the condition number.
In particular, if \mbox{$M_{\rm o}<M_{\rm b}$} and \mbox{$D_{\rm o}<D_{\rm b}$} then \mbox{$\chi<1$.}
Generally speaking, lower values of $D_{\rm o}$ or $M_{\rm o}$, relative to the corresponding values of
$D_{\rm b}$ and $M_{\rm b}$, reduce the risk of the condition number being increased compared to the condition
number with a diagonal $\mathbf{R}$ (\textit{i.e.}, of being in the red area of
Figure~\ref{fig_chi_maps} where $\chi>1$). The parameters $D_{\rm o}$ and $M_{\rm o}$ can be reduced
progressively through trial-and-error to determine a convergence rate at least as good as the one
obtained with $\widetilde{\mathbf{R}}_1$.
For example, in the experiment with $\widetilde{\mathbf{R}}_5$, we set \mbox{$M_{\rm o}=2$}
instead of $10$ while keeping the correct value of \mbox{$D_{\rm o}=120$~km}.
With $\widetilde{\mathbf{R}}_5$, the condition number is approximately $20$ times smaller
than with $\widetilde{\mathbf{R}}_1$ (and thus slightly smaller than with $\widetilde{\mathbf{R}}_2$).
The experiment with $\widetilde{\mathbf{R}}_5$ has similar convergence rate
to the experiments with $\widetilde{\mathbf{R}}_3$ or $\widetilde{\mathbf{R}}_4$,
but achieves a better error reduction ($33\%$), which is close to that of the theoretical minimum ($35\%$).

\section{Summary and conclusions}
\label{sec_conclusions}

Data assimilation concerns the problem of determining the optimal state of a system given observations,
a background (prior) estimate of the state, and constraints that link the system state to the observations.
Mathematically, the problem can be cast as one of nonlinear weighted least-squares.
The technique of variational data assimilation, which is commonly used in atmospheric and ocean applications,
seeks an approximate solution by using a Truncated Gauss-Newton (TGN) algorithm to minimise the cost function
of the nonlinear weighted least-squares problem. The TGN algorithm approximates the nonlinear problem by a connected
sequence of linear sub-problems where each sub-problem is solved using a conjugate gradient (CG) algorithm.

In this article, we have studied the convergence properties of the CG algorithm with respect to parameter
specifications in the background-error covariance matrix ($\mathbf{B}$) and observation-error
covariance matrix ($\mathbf{R}$) whose respective inverse matrices are used to define the weights
for the background and observations in the cost function. In line with common practice in variational
data assimilation, we considered a CG algorithm that uses $\mathbf{B}$ as a preconditioner, which we referred
to as the $\mathbf{B}$-Preconditioned Conjugate Gradient (B-PCG) algorithm. Our results have shown that
the convergence rate of B-PCG (and thus of the whole TGN minimisation) is highly sensitive to
parameters controlling the shape of typical covariance functions used to model $\mathbf{B}$ and $\mathbf{R}$.
In particular, the number of CG iterations needed to reach a given tolerance can change by a few orders of magnitude
depending on the relative parameter specifications between $\mathbf{B}$ and $\mathbf{R}$. This underlines
the importance of including convergence impact as an additional constraint when adjusting covariance model
parameters to fit covariances estimated from statistics.

We began by recalling the general upper bounds on the condition number derived by \cite{Tabeart_2021}.
These upper bounds do not depend on the type of covariance matrices used, and thus can be overly pessimistic
in specific cases.
In order to derive more accurate bounds, we need to consider specific covariance matrices. In this article,
we have focussed on covariance matrices that can be modelled as a matrix-vector product using a diffusion operator.
Diffusion operators are commonly used for modelling spatial covariances in ocean data assimilation
\citep{Weaver_2001} and are closely related to other techniques for modelling spatial covariances
in atmospheric data assimilation \citep{Purser_2003}, geostatistics \citep{Lindgren_2011},
inverse problems \citep{buithanh13} and uncertainty quantification \citep{gmeiner17}.
They are well suited for problems that have large state and observation vectors,
they provide convenient access to an inverse covariance operator (specifically $\mathbf{R}^{-1}$
as required by B-PCG), and they have useful flexibility for specifying covariance functions with different
characteristics.

In order to simplify the theoretical analysis, we assumed that the basic parameters of the diffusion-based
covariance models for both $\mathbf{B}$ and $\mathbf{R}$ were constant.
These parameters consist of the standard deviations
($\sigma_b$ and $\sigma_{\rm o}$), as well as parameters that control the degree of smoothness
(integers $M_{\rm b}$ and $M_{\rm o}$) and spatial range (length-scales $L_{\rm b}$ and $L_{\rm o}$) of the underlying
correlation functions.
 (The quantities with subscripts `${\rm b}$' and `${\rm o}$' refer to the background and observation
 quantities, respectively.)
With these assumptions, the covariance functions implied by the diffusion model are of Mat\'ern type.
In $\mathbb{R}$, they are Auto-Regressive (AR) functions of order $M$, which are the functions relevant
for our one-dimensional (1D) analysis.
Furthermore, we assumed that the grids supporting the background state and observations have uniform
resolution $h_{\rm b}$ and \mbox{$h_{\rm o} = \zeta h_{\rm b}$} where $\zeta$ is a positive integer,
which implies that there are fewer observations than background grid points.
Under these assumptions, we derived an analytical expression
for the eigenvalues of the $\mathbf{B}$-preconditioned Hessian matrix ($\mathbf{S}$). By further assuming
that $\mathbf{H}\mathbf{B}\mathbf{H}^{\transpose}$ can be approximated by a diffusion operator $\Bhat$ that
is discretised directly on the observation grid, it has been possible to derive criteria
that the parameter pairs \mbox{($M_{\rm b}, L_{\rm b}$)} and \mbox{($M_{\rm o}, L_{\rm o}$)}
must jointly satisfy to obtain a minimum upper bound for the condition number of $\mathbf{S}$.
These constraints are exact when the observation and background grids coincide
(\mbox{$\zeta =1$}), but are affected by a minor discretisation error when
the observation grid is coarser than the background grid (\mbox{$\zeta < 1$}).
We used these analytical results to interpret numerical results from experiments with a
1D variational data assimilation system (1D-Var).

First, our analytical expressions expose the already well-known result that increasing (decreasing)
the ratio $\sigma_{\rm o}/\sigma_{\rm b}$ leads to an increase (decrease) of the condition number
of $\mathbf{S}$. Furthermore, when \mbox{$M_{\rm o}=M_{\rm b}$}, our results show that the condition number is
minimised when the same correlation model is used for both background and observation errors
(\textit{i.e.}, \mbox{$L_{\rm b}= L_{\rm o}$}). This is consistent with the results
of \citet{Tabeart_2021} who considered only the special case when \mbox{$M_{\rm o}=M_{\rm b} = 2$}
({\it i.e.}, when the correlation functions are Second-Order AR functions).
However, the main contribution of our work has been to extend the analysis to the more realistic case
where the background and observation errors are modelled with different smoothness parameters
(\mbox{$M_{\rm o} \neq M_{\rm b}$}).

While we have derived the analytical results in terms of the parameter pairs
\mbox{($M_{\rm b}, L_{\rm b}$)} and \mbox{($M_{\rm o}, L_{\rm o}$)}, we have mainly interpreted them
and the results of the numerical experiments in terms of the parameter pairs
\mbox{($M_{\rm b}, D_{\rm b}$)} and \mbox{($M_{\rm o}, D_{\rm o}$)} where
$D_{\rm b}$ and $D_{\rm o}$ are alternative (`Daley') length-scale parameters commonly used
for differentiable correlation functions in data assimilation \citep{Daley_1991}.
Specifically, for the 1D problem under consideration,
\mbox{$D_{\rm b} = L_{\rm b}\sqrt{2M_{\rm b} - 3}$} and \mbox{$D_{\rm o} = L_{\rm o}\sqrt{2M_{\rm o} - 3}$}
where \mbox{$M_{\rm b} > 1$} and \mbox{$M_{\rm o} > 1$}. In terms of fixed values of
$D_{\rm b}$ and $D_{\rm o}$, the AR functions have the convenient property that they converge to
Gaussian functions for large $M_{\rm b}$ and $M_{\rm o}$. Our results have also exposed a direct
relationship with closely-related (`Stein') length-scale parameters,
\mbox{$\rho_{\rm b} = L_{\rm b}\sqrt{2M_{\rm b} - 1}$} and
\mbox{$\rho_{\rm o} = L_{\rm o}\sqrt{2M_{\rm o} - 1}$}, used in geostatistics \citep{Stein_1999}.
In terms of fixed values of $\rho_{\rm b}$ and $\rho_{\rm o}$, the AR functions also converge
to Gaussian functions for large $M_{\rm b}$ and $M_{\rm o}$, and are defined for both
the differentiable (\mbox{$M_{\rm b}>1$} and \mbox{$M_{\rm o}>1$}) and
non-differentiable AR functions (\mbox{$M_{\rm b}=M_{\rm o}=1$}).

The condition number is markedly more sensitive to the parameter specifications
for the case \mbox{$M_{\rm o} > M_{\rm b}$} than \mbox{$M_{\rm o} < M_{\rm b}$}.
This has been illustrated in the numerical experiments and is evident from the analytical expression
(Equation~\eqref{eq_minima_Mb_smaller_D}) that describes the relationship between
\mbox{($M_{\rm b}, D_{\rm b}$)} and \mbox{($M_{\rm o}, D_{\rm o}$)} required to achieve
the minimum upper bound of the condition number when \mbox{$M_{\rm o }>M_{\rm b}$}. In general,
$D_{\rm o}$ needs to be much smaller than $D_{\rm b}$ for this optimality condition to be met because
of the presence of $M_{\rm b}$ and $M_{\rm o}$ as exponents in the expressions.
For this case, the small eigenvalues of $\mathbf{S}$ are amplified by $\mathbf{R}^{-1}$
more than they are damped by $\mathbf{B}$, which can result in a drastic increase of the condition
number and thus a significant risk that the convergence rate of B-PCG will be substantially degraded.
For the case \mbox{$M_{\rm o}<M_{\rm b}$}, the minimum upper bound of the condition number
is attained when the `Stein' length-scales $\rho_{\rm b}$ and $\rho_{\rm o}$ are equal
(Equation~\eqref{eq_minima_Mo_smaller}). In contrast with the case \mbox{$M_{\rm o}>M_{\rm b}$},
this means that the minimum upper bound is obtained when the background- and observation-error
correlation functions have similar spatial range. When \mbox{$M_{\rm o}<M_{\rm b}$}
and \mbox{$D_{\rm o}\leq D_{\rm b}$},
accounting for observation-error correlations systematically improves the conditioning of $\mathbf{S}$
compared to the case when a diagonal $\mathbf{R}$ is used.

While $M_{\rm o}$ and $M_{\rm b}$ are intended as free parameters of the correlation model,
to be adjusted to achieve the best possible fit to statistical estimates of correlated error,
they also provide valuable leverage for controlling the conditioning
of $\mathbf{S}$ when a non-diagonal $\mathbf{R}$ is used. Whereas using a non-diagonal $\mathbf{R}$
is likely to degrade significantly the convergence rate when \mbox{$M_{\rm o}>M_{\rm b}$},
it can accelerate the convergence rate compared to the case where a diagonal $\mathbf{R}$ is used
when \mbox{$M_{\rm o}<M_{\rm b}$}. In practice, this situation would correspond to choosing a
Gaussian-like correlation function for background error ({\it e.g.}, \mbox{$M_{\rm b}\approx 10$})
and a correlation function with fatter tails (more power at smaller scales)
for observation error ({\it e.g.}, \mbox{$M_{\rm o} = 2$}).
Interestingly, there is evidence in the atmospheric data assimilation literature that suggests that
error correlations for certain observation types do exhibit fat tails. In this case, the interest
in using a non-diagonal $\mathbf{R}$ is twofold: it can accelerate the B-PCG convergence rate
as well as providing a more accurate correlation model.
For the case where statistical estimates of the correlation parameters result in unfavourable
values in terms of conditioning (\mbox{$M_{\rm o}>M_{\rm b}$} and/or \mbox{$D_{\rm o}\gg D_{\rm b}$}),
$\mathbf{S}$ can be `reconditioned' by adjusting the values of $M_{\rm o}$ and $D_{\rm o}$ to enable
faster convergence. We have shown in our 1D-Var experiments that approximate correlation models
can be used to reduce the condition number without causing a significant loss
of solution accuracy at full convergence. This corroborates the conclusion
of several previous studies \citep[\textit{e.g.},][]{Stewart_2013}
that even a very crude approximation of the observation-error correlations
can give a better solution than one obtained by ignoring them altogether.

The analysis in this article has exposed important sensitivities of the convergence
rate of B-PCG to fundamental parameters of diffusion model representations of $\mathbf{B}$ and $\mathbf{R}$,
and has lead to conditions for adjusting the parameters to improve the conditioning of $\mathbf{S}$.
We can expect similar results in higher dimensions where diffusion kernels have similar (Mat\'ern-like) functional
forms as those in our 1D study. However, more work is required to extend these results to account for more
sophisticated diffusion models, such as ones that include diffusion tensors that are anisotropic and spatially varying,
or multiple-scale and hybrid formulations that are built from linear combinations of diffusion operators.
In this study, we considered a simple observation network. The convergence properties need to be revisited
in an operational-like framework using a full network of diverse observations for which only a subset
may be affected by spatially correlated errors in $\mathbf{R}$, and where observation operators will be much more complex.
The results from this study are a first step towards understanding and controlling the convergence properties in this
more challenging framework.

\bibliography{references}

\begin{appendix}

\section{Mat\'ern functions and diffusion operators on $\mathbb{R}$ and $\mathbb{S}$}
\label{app_matern}

\subsection{Diffusion on $\mathbb{R}$}

\label{app_matern_R}

Mat\'ern random fields on $\mathbb{R}^d$ can be derived by solving a general
stochastic fractional partial differential equation (PDE)
\citep{Whittle_1963,Guttorp_2006}.
Here, we are interested in the correlation functions of a subset of Mat\'ern fields
on $\mathbb{R}$ (\mbox{$d=1$}) where parameters are chosen such that the generating PDE
has a simplified form for numerical computations.

Let \mbox{$\chi : z \mapsto \chi(z)$} and \mbox{$\eta : z \mapsto \eta(z)$}
be square-integrable functions (\mbox{$\chi, \eta \in L^2(\mathbb{R})$})
of the spatial coordinate \mbox{$z\in \mathbb{R}$}.
We consider solutions of the following elliptic equation on $\mathbb{R}$:
\begin{align}
  \frac{1}{\gamma^2}\left( I -  L^2 \frac{\partial^2}{\partial z^2}\right)^{\! M} \!
   \eta (z) = \chi (z)
\label{eq:impR}
\end{align}
where $I$ is the identity operator, $M$ is a positive integer,
$L$ is a length-scale parameter, and $\gamma^2$ is a normalisation constant.
Equation~\eqref{eq:impR} can be interpreted
as the {\it inverse} of a diffusion operator, \mbox{$\eta \mapsto \mathcal{L}^{-1} \eta$},
which is formed by discretising the time derivative of the diffusion equation with an
Euler backward (implicit) scheme and by applying the resulting operator over
$M$ time steps \citep{Mirouze_2010}. With this interpretation,
\mbox{$L^2 = \mu \Delta t$} where $\mu$ is the diffusion coefficient and
$\Delta t$ is the time step. The integral solution of Equation~\eqref{eq:impR}
is thus a diffusion operator, \mbox{$\chi \mapsto \mathcal{L} \chi$}.
The solution, which is straightforward to derive using the Fourier transform,
is a convolution operator,
\mbox{$\mathcal{L} \chi \equiv c \ast \chi$}, where \mbox{$c = c(r)$} is an $M$th-order AR function
(a polynomial times the exponential function) given by
\begin{equation}
c(r) = \sum_{j=0}^{M-1}\beta_{j}
\left(\frac{r}{L}\right)^{j}
e^{-r/L},
\label{eq:c}
\end{equation}
$r=|z-z^{\prime}|$ is the Euclidean distance between points $z$ and $z^{\prime}$,
and \begin{equation}
\beta_{j}=\frac{2^j(M-1)! \, (2M-j-2)!}{j! \, (M-j-1)!\, (2M-2)!}.
\label{eq:beta}
\nonumber
\end{equation}
Setting the normalisation constant to
\begin{equation}
\gamma^2 = \nu \, L
\label{eq:gamma}
\end{equation}
where
\begin{equation}
\nu = \frac{2^{2M-1}[(M-1)!]^2}{(2M-2)!}
\label{eq:nu}
\end{equation}
ensures that $c(0)=1$ \citep{Mirouze_2010}. The power spectrum of $c$, which
is given by the Fourier transform $\hat{c}$ of $c$, describes
the smoothness properties of $c$ as a function of spectral scale:
\begin{equation}
  \hat{c}(\hat{z}) = \frac{\gamma^2}{\left( 1 + L^2 \hat{z}^2\right)^M}
  \label{eq:chat}
\end{equation}
where $\hat{z}$ is the spectral wavenumber.

We focus on the differentiable AR functions that correspond to $M>1$. For these functions,
we use a standard parameter \citep{Daley_1991}
\begin{equation*}
D =\sqrt{-\frac{1}{\partial^2 c /\partial z^2 |_{z=z^{\prime}}}}
\label{eq:daley1}
\end{equation*}
to characterize the length-scale of the correlation function. The parameter
$D$, which we call the Daley length-scale, corresponds to the distance
between \mbox{$z=z^{\prime}$} and the mid-amplitude point of a parabola that osculates
the AR function at \mbox{$z=z^{\prime}$}. Using Equation~\eqref{eq:c}, it is straightforward
to show that \mbox{$D = L \sqrt{2 M - 3}$} (Equation~\eqref{eq_D}),
which is a function of both $L$ and $M$. An important property of AR functions
is that, for fixed $D$, they converge to the Gaussian function $c_{\rm g}(r)$
as $M\rightarrow \infty$:
\begin{equation}
c_{\rm g}(r) =  \exp{\big( {-r^{2}/2D^{2}} \big)}.
\label{eq:cg}
\end{equation}

Figure~\ref{fig_matern_plot} shows the effect on $c$ and $\hat{c}$ of varying $D$
for a fixed value of $M$, and vice versa.
Increasing $D$ with $M$ held fixed increases the spatial reach of the correlation functions but
does not affect their spectral decay rate at small wavelengths.
On the other hand, increasing $M$ with $D$ held fixed results
in correlation functions with thinner tails and sharper spectral decay
rates at small wavelengths.
\begin{figure}[h]
 \includegraphics[width = \textwidth]{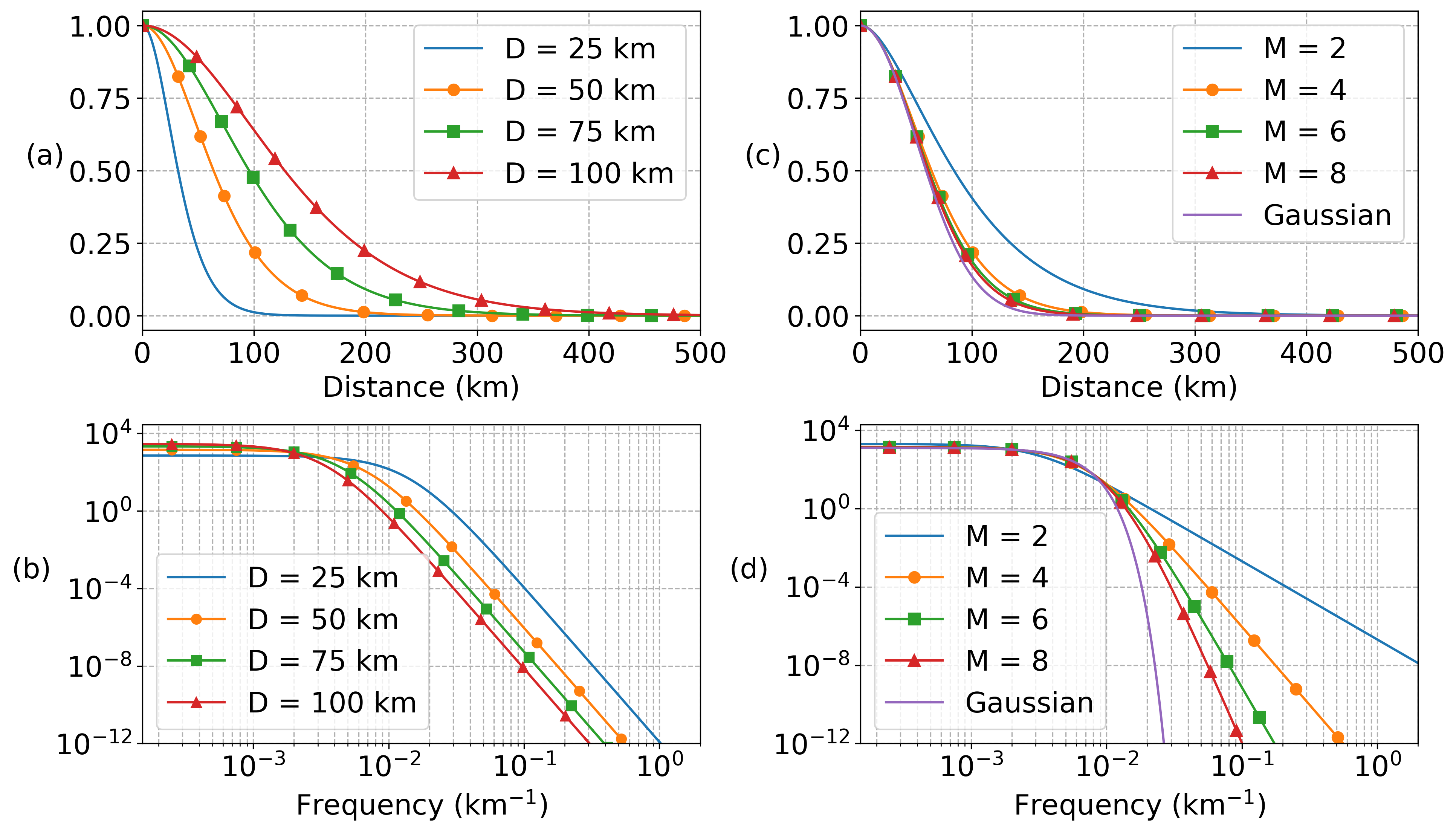}
  \centering
\caption{(a) Equation~\eqref{eq:c} plotted as a function of distance $r$, and
(b) Equation~\eqref{eq:chat} plotted as a function of wavelength $2\pi / \hat{z}$.
The curves are displayed for different values of $D$ and fixed value of $M=4$.
Panels~(c) and (d) show corresponding plots where $M$ is varied for fixed value of \mbox{$D=50$}~km.}
  \label{fig_matern_plot}
\end{figure}

\subsection{Diffusion on $\mathbb{S}$}

\label{app_matern_S}

\cite{Tabeart_2018} and \cite{Tabeart_2021} use a SOAR function,
which is equal to Equation~\eqref{eq:c} with \mbox{$M=2$} and hence \mbox{$D=L$}
from Equation~\eqref{eq_D}.
Furthermore, they restrict the SOAR function to the circular domain ($\mathbb{S}$)
of radius $a$ by using chordal distance \mbox{$r = 2a\sin (\theta /2)$}
where $\theta$ is the angle between points $z$ and $z^{\prime}$ on the circle.
This ensures that $c(r)$ is positive definite on $\mathbb{S}$
\citep{Gaspari_1999}. Taking $a$ as the radius of the Earth,
the domain $\mathbb{S}$ can be interpreted as a latitude circle at
the Equator.

In this article, we have also considered a circular domain of radius $a$.
For length-scales \mbox{$L \ll a$},
the correlation functions associated with the diffusion operator applied
on $\mathbb{S}$ are approximately Mat\'ern since the influence of curvature
is minor. It is instructive nevertheless to present the exact correlation functions
on $\mathbb{S}$, which can be derived by considering the solution of
the elliptic equation
\begin{align}
  \frac{1}{\gamma^2}
  \left( I -  \frac{L^2}{a^2} \frac{\partial^2}{\partial \phi^2}\right)^{\! M} \!
   \eta (\phi) = \mu (\phi),
\label{eq:impS}
\end{align}
subject to periodic boundary conditions on the solution and its derivative:
\begin{align}
\eta(-\pi) &=\eta(\pi),
  \nonumber
\\
\frac{\displaystyle \partial \eta}{\displaystyle \partial \phi}\bigg|_{\phi =-\pi} &=
\frac{\displaystyle \partial \eta}{\displaystyle \partial \phi}\bigg|_{\phi = \pi}.
  \label{eq:bcs}
  \nonumber
\end{align}
Solving Equation~\eqref{eq:impS} is equivalent to solving Equation~\eqref{eq:impR}
on the periodic domain \mbox{$-Z \leq z \leq Z$} with \mbox{$z =a\phi$} and
\mbox{$Z=a\pi$}.

The solutions that satisfy the boundary conditions are of the general form
\begin{equation}
\eta (\phi) = \sum_{m=0}^{\infty} A_m \cos( m \phi ) + B_m \sin( m \phi ).
\label{eq:gensoln}
\end{equation}
The coefficients $A_m$ and $B_m$ can be determined using the orthogonality relations of
the sine and cosine functions:
\begin{align}
\int_{-\pi}^{\pi} \sin (m\phi) \sin (n\phi) \textrm{d} \phi & \, = 
\int_{-\pi}^{\pi} \cos (m\phi) \cos (n\phi) \textrm{d} \phi  \, = \, \pi \, \delta_{mn}, \nonumber \\
\int_{-\pi}^{\pi} \sin (m\phi) \cos (n\phi) \textrm{d} \phi & \, = \, 0 \; \; \forall \, m, n \nonumber
\end{align}
where $\delta_{mn}$ is the Kronecker delta.
To determine $A_m$, we substitute Equation~\eqref{eq:gensoln} in Equation~\eqref{eq:impS}, multiply the
resulting equation by \mbox{$\cos (n\phi)$}, integrate from \mbox{$-\pi$} to $\pi$, and use the
orthogonality relations above. This yields
\begin{equation}
  A_m = \frac{\gamma^2}{\pi} \left( 1 + \frac{L^2}{a^2} \, m^2 \right)^{\! -M}
  \int_{-\pi}^{\pi} \mu(\phi) \cos (m\phi) \, \textrm{d} \phi.
\label{eq:Am}
\end{equation}
To determine $B_m$, we follow the same procedure but multiply by $\sin (n\phi)$.
This yields
\begin{equation}
  B_m = \frac{\gamma^2}{\pi} \left( 1 + \frac{L^2}{a^2} \, m^2 \right)^{\! -M}
  \int_{-\pi}^{\pi} \mu(\phi) \sin (m\phi) \, \textrm{d} \phi.
\label{eq:Bm}
\end{equation}
Substituting Equations~\eqref{eq:Am} and \eqref{eq:Bm} into Equation~\eqref{eq:gensoln}, and using the
trigonometric identity
\begin{equation}
  \cos ( m(\phi - \phi^{\prime}) ) = \cos ( m\phi )\, \cos ( m\phi^{\prime} )
  + \sin ( m\phi )\, \sin ( m\phi^{\prime} )
\label{eq:cos_theta}
\nonumber
\end{equation}
yields the solution
\begin{equation}
\eta(\phi) = \int_{-\pi}^{\pi} c(\theta)\, \mu(\phi^{\prime})\, \textrm{d}\phi^{\prime}
\label{eq:etam}
\nonumber
\end{equation}
where \mbox{$\theta = \phi - \phi^{\prime}$},
\begin{equation}
  c(\theta) = \gamma^2 \sum_{m=0}^{\infty} c_m \cos (m \theta)
\label{eq:g}
\end{equation}
and
\begin{equation}
  c_m = \frac{1}{\pi} \left( 1 + \frac{L^2}{a^2} \, m^2 \right)^{\! -M}.
\label{eq:gm}
\end{equation}
The normalisation factor and Daley length-scale are, respectively,
\begin{equation}
  \gamma^2 = \frac{1}{\sum_{m=0}^{\infty} c_m}
  \nonumber
\end{equation}
and
\begin{align}
D & =
a \sqrt{-\frac{1}{\partial^2 c /\partial \phi^2 |_{\theta =0}}}
= a \, \sqrt{\frac{1}{\sum_{m=0}^{\infty} m^2 c_m}}.
\label{eq:daley}
  \nonumber
\end{align}

All valid continuous isotropic
correlation functions on $\mathbb{S}$ can be represented by a Fourier cosine series expansion
with non-negative Fourier coefficients
(see Theorem~2.11 in \cite{Gaspari_1999}), which is clearly satisfied by
Equations~\eqref{eq:g} and \eqref{eq:gm}. The smoothness properties of the correlation function
are determined by the Fourier coefficients $c_m$ in Equation~\eqref{eq:gm}. They can be seen
to have a similar dependence on $L$ and $M$ as $\hat{c}$ in
Equation~\eqref{eq:chat} where we can associate $\hat{z}$ on $\mathbb{R}$ with
\mbox{$m^2 / a^2$} on $\mathbb{S}$.

\section{Proof of Theorem~5}
\label{app_proof_theorem5}

The eigenvalues of $\Shat$ are bounded below by 1 (see Theorem~\ref{th_eig_So}), which implies that
\begin{equation}
\kappa(\Shat)  \leq \max_{i\in\llbracket 0, m-1 \rrbracket} \lambda_i(\Shat) = \lambda_{\max} (\Shat).
\nonumber
\end{equation}
Since \mbox{$0 \le \sin^2(y) \le 1$} for any \mbox{$y \in [0, \pi]$}, $\lambda_{\max} (\Shat)$ is bounded by
\begin{equation}
\lambda_{\max} (\Shat)   \leq \max_{x \in [0,1]} \phi(x)
\nonumber
\end{equation}
where $\phi$ is a continuously differentiable function given by
\begin{equation}
\phi(x) = 1+ \alpha \frac{\left[1+4\widetilde{L}_{\rm o}^2x\right]^{M_{\rm o}}}
{\left[1+4 \Lbhat^2x\right]^{M_{\rm b}}}.
\label{eq_def_f}
\end{equation}
We seek a solution to the following bound-constraint problem:
\begin{equation}
\max_{x \in [0,1]} \phi(x).
\label{eq_bcp}
\end{equation}
Let \mbox{$x^{\ast} \in [0,1]$} be a stationary point for problem~\eqref{eq_bcp} and
let us first assume that such point is inside the domain; \emph{i.e.}, \mbox{$\phi'(x^\ast) = 0$}.
The derivative of the function $\phi$ can be expressed as
\begin{equation}
\phi'(x) \, = \, 4 \alpha \, v(x )\, w(x)
\nonumber
\end{equation}
where
$$
v(x) = \left( \frac{\widetilde{L}_{\rm o}^2 M_{\rm o}}
{1+4\widetilde{L}_{\rm o}^2 x} -  \frac{ \Lbhat^2M_{\rm b}}{1+4 \Lbhat^2 x} \right)
\quad  \text{and} \quad
w(x) =\frac{\left[1+4\widetilde{L}_{\rm o}^2x\right]^{M_{\rm o}}}{\left[1+4 \Lbhat^2x\right]^{M_{\rm b}}}.
$$
Since $w(x)$ is strictly positive, and \mbox{$\alpha > 0$}, a stationary point inside the domain satisfies \mbox{$v(x^\ast) = 0$}.
This yields
 \begin{equation}
x^\ast =\frac{  \Lbhat^2M_{\rm b} - \widetilde{L}_{\rm o}^2M_{\rm o}}{4\widetilde{L}_{\rm o}^2 \Lbhat^2(M_{\rm o} - M_{\rm b})}.
\label{eq_def_xi}
\end{equation}
The second derivative of $f$ is
\begin{equation}
\phi''(x) \, = \, 16 \alpha \, w(x) \, \left( \frac{\Lbhat^4  M_{\rm b}}{ (1+ 4\Lbhat^2 x)^2}
- \frac{\widetilde{L}_{\rm o}^4 M_{\rm o}}{ (1+ 4\widetilde{L}_{\rm o}^2 x)^2}  \right) \, + \, 16 \alpha\, v(x)^2 \, w(x).
\label{eq_second_derivative}
\end{equation}
Substituting~\eqref{eq_def_xi} into Equation~\eqref{eq_second_derivative} gives
$$
\phi''(x^\ast) \, = \, 16 \alpha ( M_{\rm o} - M_{\rm b} )\, w(x^\ast)
\left( \frac{\Lbhat^4\widetilde{L}_{\rm o}^4\big( M_{\rm o} - M_{\rm b} \big)^2}
{ \big( \Lbhat^2 - \widetilde{L}_{\rm o}^2 \big)^2  M_{\rm o} M_{\rm b}} \right).
$$
Therefore, the stationary point $x^\ast$ can be a maximum point if and only if \mbox{$\phi''(x^\ast) < 0$};
\emph{i.e.}, if \mbox{$M_{\rm o} < M_{\rm b}$}.  In addition, for $x^\ast$ to be a feasible point then
\mbox{$0 < x^\ast < 1$} and from Equation~\eqref{eq_def_xi} the following conditions must be satisfied:
$$
\Lbhat^2 M_{\rm b} - \widetilde{L}_{\rm o}^2 M_{\rm o} < 0
$$
and
$$
\Lbhat^2 M_{\rm b} - \widetilde{L}_{\rm o}^2 M_{\rm o}  \, > \, 4 \widetilde{L}_{\rm o}^2
\Lbhat^2 \big( M_{\rm o}  - M_{\rm b} \big).
$$
For the other cases, $x^\ast$ is equal to either the lower bound (\mbox{$x^\ast = 0$}) or the upper bound (\mbox{$x^\ast = 1$}),
with function values of
\begin{align*}
\phi(0) & = 1 + \alpha, \\
\phi(1) & = 1 + \alpha   \frac{\left[1+4\widetilde{L}_{\rm o}^2\right]^{M_{\rm o}}}
{\left[1+4 \Lbhat^2\right]^{M_{\rm b}}}.
\end{align*}

Finally, substituting~\eqref{eq_def_xi} into \eqref{eq_def_f}, we obtain that
\begin{equation}
\phi(x^\ast) = 1+ \alpha \left(\frac{\widetilde{L}_{\rm o}^2}{M_{\rm b}}\right)^{M_{\rm b}}
\left(\frac{M_{\rm o}}{ \Lbhat^2}\right)^{M_{\rm o}}
\left(\frac{M_{\rm b}-M_{\rm o}}{\widetilde{L}_{\rm o}^2 -  \Lbhat^2}\right)^{M_{\rm b}-M_{\rm o}}.
\end{equation}
\qed

\section{Proof of Corollary~1}
\label{app_proof_corollary1}
We consider $\eta$ as a function of $\widetilde{L}_{\rm o}$, denoted $f(\widetilde{L}_{\rm o})$.
Hereafter, the conditions~(i), (ii) and (iii) will refer to the conditions stated in Theorem~\ref{th_diffusion_bound}.
We consider the case where condition~(ii) does not hold, \textit{i.e.}, \mbox{$M_{\rm o}\geq M_{\rm b}$}.
In this case, Theorem \ref{th_diffusion_bound} states that
\begin{equation}
f(\widetilde{L}_{\rm o})= 1+\frac{\sigma_{\rm b}^2\nu_{\rm b} \Lbhat}{\sigma_{\rm o}^2\nu_{\rm o} \widetilde{L}_{\rm o}}
\max\left\{\frac{\big( 1+4\widetilde{L}_{\rm o}^2 \big)^{M_{\rm o}}}{\big( 1+4 \Lbhat^2 \big)^{M_{\rm b}}} ;\; 1\right\}.
\label{eq:fvalue}
\end{equation}
Let us first assume that
\mbox{$\big( 1+4\widetilde{L}_{\rm o}^2 \big)^{M_{\rm o}}<\big( 1+4 \Lbhat^2 \big)^{M_{\rm b}}$}.
Then, Equation~\eqref{eq:fvalue} simplifies to
\begin{equation*}
f(\widetilde{L}_{\rm o}) = 1+\frac{\sigma_{\rm b}^2\nu_{\rm b} \Lbhat}{\sigma_{\rm o}^2\nu_{\rm o} \widetilde{L}_{\rm o}},
 \end{equation*}
which is a decreasing function of $\widetilde{L}_{\rm o}$. Let us now assume that
\mbox{$\left(1+4\widetilde{L}_{\rm o}^2\right)^{M_{\rm o}}>\left(1+4 \Lbhat^2\right)^{M_{\rm b}}$}.
In this case, Equation~\eqref{eq:fvalue} becomes
\begin{equation*}
f(\widetilde{L}_{\rm o})  = 1+\frac{\sigma_{\rm b}^2\nu_{\rm b} \Lbhat}{\sigma_{\rm o}^2\nu_{\rm o}
\widetilde{L}_{\rm o}}\frac{\big( 1+4\widetilde{L}_{\rm o}^2 \big)^{M_{\rm o}}}{\big( 1+4 \Lbhat^2 \big)^{M_{\rm b}}},
 \end{equation*}
whose derivative is given by
 \begin{equation*}
f'(\widetilde{L}_{\rm o}) =  \underbrace{\frac{\sigma_{\rm b}^2\nu_{\rm b} \Lbhat}
 {\sigma_{\rm o}^2\nu_{\rm o} \widetilde{L}_{\rm o}^2 }
 \frac{\big( 1+4\widetilde{L}_{\rm o}^2\big)^{M_{\rm o}- 1}}{\big( 1+4 \Lbhat^2 \big)^{M_{\rm b}}}}_{>0}
 \left( 4\widetilde{L}_{\rm o}^2\big( 2M_{\rm o}-1 \big) -1\right).
 \end{equation*}
Under the assumption that
\mbox{$\widetilde{L}_{\rm o} > 1/\big(2\sqrt{2M_{\rm o}-1}\big)$},
$f(\widetilde{L}_{\rm o})$ is an increasing function of $\widetilde{L}_{\rm o}$.
 As we already showed that $f(\widetilde{L}_{\rm o})$ is decreasing  when
\mbox{$\big( 1+4\widetilde{L}_{\rm o}^2\big)^{M_{\rm o}}<\big( 1+4 \Lbhat^2 \big)^{M_{\rm b}}$},
 the function $f$ then reaches its unique minimum when
 \begin{equation}
\left(1+4\widetilde{L}_{\rm o}^2\right)^{M_{\rm o}}=\left(1+4 \Lbhat^2\right)^{M_{\rm b}}.
 \end{equation}
 \qed

\section{Proof of Corollary~2}
\label{app_proof_corollary2}
We consider $\eta$ as a function of $\widetilde{L}_{\rm o}$, denoted by $f(\widetilde{L}_{\rm o})$.
Hereafter, the conditions~(i), (ii) and (iii) will refer to the conditions stated in Theorem~\ref{th_diffusion_bound}.
Let us first assume that condition~(ii) holds, \textit{i.e.}, \mbox{$M_{\rm o}<M_{\rm b}$} whereas condition~(i) does not hold,
\textit{i.e.},
\begin{equation}
\widetilde{L}_{\rm o}^2 M_{\rm o} \, \leq  \, \Lbhat M_{\rm b}.
\label{eq_ii_not_hold}
\end{equation}
From~\eqref{eq_ii_not_hold}, we first consider the variations of $f(\widetilde{L}_{\rm o})$
when $\widetilde{L}_{\rm o}$ is in the interval \mbox{$\left [0, \Lbhat\sqrt{M_{\rm b}/M_{\rm o}}\right]$}.
In this case, Theorem~\ref{th_diffusion_bound} states that
\begin{equation}
f(\widetilde{L}_{\rm o})\, = \, 1+\frac{\sigma_{\rm b}^2\nu_{\rm b} \Lbhat}{\sigma_{\rm o}^2\nu_{\rm o}
\widetilde{L}_{\rm o}}\max\left\{\frac{(1+4\widetilde{L}_{\rm o}^2)^{M_{\rm o}}}{(1+4 \Lbhat^2)^{M_{\rm b}}} ;\; 1\right\}.
\label{eq_eta_itrue_iifalse}
\end{equation}
From condition \eqref{eq_ii_not_hold}, we have
\begin{align}
\widetilde{L}_{\rm o}^2   & \, \leq \, \Lbhat^2 \frac{M_{\rm b}}{M_{\rm o}}, \nonumber
\end{align}
which implies that
\begin{align}
\left( 1+4\widetilde{L}_{\rm o}^2\right)^{\frac{M_{\rm o}}{M_{\rm b}}}  &
\leq  \left(1+4\Lbhat^2 \frac{M_{\rm b}}{M_{\rm o}}\right)^{\frac{M_{\rm o}}{M_{\rm b}}} \nonumber
\end{align}
and hence
\begin{align}
\left( 1+4\widetilde{L}_{\rm o}^2\right)^{\frac{M_{\rm o}}{M_{\rm b}}}
-  \left(1+4\Lbhat^2\right) & \, \leq  \, \left(1+4\Lbhat^2 \frac{M_{\rm b}}{M_{\rm o}}\right)^{\frac{M_{\rm o}}{M_{\rm b}}}
- \left(1+4\Lbhat^2\right) \, = \, g(\Lbhat^2) \label{eq_ineq_f}
\end{align}
where the function $g$ is given by \mbox{$g(x)=\left(1+4x M_{\rm b}/M_{\rm o}\right)^{M_{\rm o}/M_{\rm b}} - \left(1+4x\right)$}.
Taking the first derivative of $g$ gives
\begin{equation*}
g'( x)\, = \, 4
\left( \left(1+4 x\frac{M_{\rm b}}{M_{\rm o}}\right)^{\frac{M_{\rm o}}{M_{\rm b}} -1}  -1 \right).
\end{equation*}
Since \mbox{$M_{\rm o}/M_{\rm b} < 1$}, for all $x\ge0$,
we have \mbox{$\big( 1+4 x M_{\rm b}/M_{\rm o} \big)^{M_{\rm o}/M_{\rm b} -1}<1$} and hence \mbox{$g'(x) <0$}; {\it i.e.},
$g$ is a decreasing function on $[0,+\infty)$.
Consequently, \mbox{$g(\Lbhat^2) \le g(0)=0$} and thus inequality \eqref{eq_ineq_f} implies that
\begin{align*}
\left( 1+4\widetilde{L}_{\rm o}^2\right)^{\frac{M_{\rm o}}{M_{\rm b}}}  -  \left(1+4\Lbhat^2\right) & \leq 0,
\end{align*}
or, equivalently,
\begin{align*}
\left( 1+4\widetilde{L}_{\rm o}^2\right)^{M_{\rm o}}  & \leq  \left(1+4\Lbhat^2\right)^{M_{\rm b}}.
\end{align*}
By using this inequality in Equation~\eqref{eq_eta_itrue_iifalse}, we obtain that
\begin{equation*}
f(\widetilde{L}_{\rm o}) = 1+\frac{\sigma_{\rm b}^2\nu_{\rm b} \Lbhat}{\sigma_{\rm o}^2\nu_{\rm o}\widetilde{L}_{\rm o}}.
\end{equation*}
In this expression, $f(\widetilde{L}_{\rm o})$ is inversely proportional to $\widetilde{L}_{\rm o}$. Therefore,
$f$ decreases on the interval \mbox{$\left [0, \Lbhat\sqrt{M_{\rm b}/M_{\rm o}} \right]$}.
As a consequence, the minimum of $f(\widetilde{L}_{\rm o})$ must be located on the interval
\mbox{$\left[ \Lbhat\sqrt{M_{\rm b}/M_{\rm o}}, +\infty \right)$}. We now study the variations of
$f$ on this interval, which means that condition~(i) holds:
\begin{equation}
\widetilde{L}_{\rm o}^2 M_{\rm o} >  \Lbhat M_{\rm b}.
\label{eq_ii_hold}
\end{equation}

We assumed that condition~(iii) holds when condition~(i) is satisfied, which means that $f(\widetilde{L}_{\rm o})$ takes the form
\begin{equation*}
f(\widetilde{L}_{\rm o})= 1+\frac{\sigma_{\rm b}^2\nu_{\rm b} \Lbhat}{\sigma_{\rm o}^2\nu_{\rm o} \widetilde{L}_{\rm o}}
\left(\frac{\widetilde{L}_{\rm o}^2}{M_{\rm b}}\right)^{M_{\rm b}} \left(\frac{M_{\rm o}}{ \Lbhat^2}\right)^{M_{\rm o}}
 \left(\frac{M_{\rm b}-M_{\rm o}}{\widetilde{L}_{\rm o}^2 -  \Lbhat^2}\right)^{M_{\rm b}-M_{\rm o}}.
\end{equation*}
The derivative of $f(\widetilde{L}_{\rm o})$ can be expressed as
\begin{equation}
\frac{\partial f}{\partial \widetilde{L}_{\rm o}}( \widetilde{L}_{\rm o}) = \frac{\sigma_{\rm b}^2\nu_{\rm b}
\Lbhat}{\sigma_{\rm o}^2\nu_{\rm o} \widetilde{L}_{\rm o}^2\big( \widetilde{L}_{\rm o}^2  - \Lbhat^2 \big)}
\left(\frac{\widetilde{L}_{\rm o}^2}{M_{\rm b}}\right)^{M_{\rm b}} \left(\frac{M_{\rm o}}{ \Lbhat^2}\right)^{M_{\rm o}}
 \left(\frac{M_{\rm b}-M_{\rm o}}{\widetilde{L}_{\rm o}^2 -  \Lbhat^2}\right)^{M_{\rm b}-M_{\rm o}}
 \left[ \widetilde{L}_{\rm o}^2 \big( 2M_{\rm o} -1 \big) -  \Lbhat^2 \big( 2M_{\rm b} -1 \big)\right].
 \label{eq_derivative_eta}
\end{equation}
If conditions~(i) and (ii) are met, we have
\mbox{$\widetilde{L}_{\rm o}^2-\Lbhat^2>0$}.
Therefore, the stationary point for $f$ satisfies
\begin{align}
\frac{\partial f}{\partial \widetilde{L}_{\rm o}}( \widetilde{L}_{\rm o})   = 0 & \; \Leftrightarrow \;
\widetilde{L}_{\rm o}^2 \big( 2M_{\rm o} -1 \big) = \Lbhat^2 \big( 2M_{\rm b} -1 \big)\\
&\; \Leftrightarrow  \;\widetilde{L}_{\rm o}  = \Lbhat \sqrt{\frac{2M_{\rm b} -1}{2M_{\rm o} -1}}.
\label{eq:stat_point}
\end{align}
Since \mbox{$M_{\rm o}  <  M_{\rm b}$}, it follows that
\begin{align*}
\Lbhat \sqrt{\frac{2M_{\rm b}-1}{2M_{\rm o}-1}}  \, > \,\Lbhat\sqrt{\frac{M_{\rm b}}{M_{\rm o}}}.\\
\end{align*}
We are now interested in examining the behaviour of $f$ on the intervals
 $\left[\Lbhat\sqrt{M_{\rm b}/M_{\rm o}},  \Lbhat \sqrt{(2M_{\rm b}-1)/(2M_{\rm o}-1)}  \right]$ and
 $\left[\Lbhat \sqrt{(2M_{\rm b}-1)/(2M_{\rm o}-1)} , +\infty \right)$.
 For the first interval, we can show that $f$ is decreasing
 since \mbox{$\partial f( \widetilde{L}_{\rm o})/\partial \widetilde{L}_{\rm o} <0$} from Equation~\eqref{eq_derivative_eta} if
\begin{align}
\widetilde{L}_{\rm o}^2 \big( 2M_{\rm o} -1 \big) < \Lbhat^2 \big( 2M_{\rm b} -1 \big)\; \Leftrightarrow \;
\widetilde{L}_{\rm o} < \Lbhat \sqrt{\frac{2M_{\rm b}-1}{2M_{\rm o}-1}} .
\end{align}
Similarly, for the second interval, we can show that $f$ is increasing
 since \mbox{$\partial f( \widetilde{L}_{\rm o})/\partial \widetilde{L}_{\rm o} >0$} from Equation~\eqref{eq_derivative_eta} if
\begin{align*}
\widetilde{L}_{\rm o}^2 \big( 2M_{\rm o} -1 \big) > \Lbhat^2 \big( 2M_{\rm b} -1 \big)\; \Leftrightarrow \;
\widetilde{L}_{\rm o} > \Lbhat \sqrt{\frac{2M_{\rm b}-1}{2M_{\rm o}-1}}.
\end{align*}
Therefore, the stationary point \eqref{eq:stat_point} is the unique minimum of $f$. Finally, multiplying both
sides of Equation~\eqref{eq:stat_point} by $h_{\rm o}\sqrt{2M_{\rm o} -1}$ yields Equation~\eqref{eq_minima_Mo_smaller}.
 \qed

\end{appendix}
\end{document}